\tikzset{->-/.style={decoration={
  markings,
  mark=at position #1 with {\arrow{>}}},postaction={decorate}}}
  \tikzset{middlearrow/.style={
        decoration={markings,
            mark= at position 0.55 with {\arrow{#1}} ,
        },
        postaction={decorate}
    }
}
\newcommand{\eee}[1]{\begin{equation}#1\end{equation}}
\newcommand{\sss}[1]{\begin{subequations}#1\end{subequations}}
\newcommand{\ddd}[1]{\begin{alignat}{2}#1\end{alignat}}
\newcommand{\nn}{\nonumber}
\newcommand{\p}{\partial}
\definecolor{ddgreen}{RGB}{0,170,0}
\renewcommand{\b}{\mathcolor{blue}}
\newcommand{\no}[1]{\left\| #1 \right\|}
\def\widehatgap{-5.5pt}
\def\subdown{-3.3pt}
\newcommand\what[2][]{%
\renewcommand\stackalignment{l}%
\stackon[\widehatgap]{#2}{%
\stretchto{%
    \scalerel*[\widthof{$#2$}]{\kern-.6pt\text{\textasciicircum}\kern-1.1pt}%
    {\rule[-0.8\textheight]{1ex}{\textheight}}
}{2ex}
_{\smash{\belowbaseline[\subdown]{\scriptscriptstyle#1}}}%
}}
\renewcommand\subsection{\@startsection{subsection}{2}%
  \z@{-.5\linespacing\@plus-.7\linespacing}{.5\linespacing}%
  {\normalfont\bfseries}}
\renewcommand\subsubsection{\@startsection{subsubsection}{3}%
  \z@{.5\linespacing\@plus.7\linespacing}{-.5em}%
  {\normalfont\scshape}}
\def\mathcolor#1#{\@mathcolor{#1}}
\def\@mathcolor#1#2#3{%
\protect\leavevmode
\begingroup
\color#1{#2}#3%
\endgroup
}
\theoremstyle{plain}  
\newtheorem{theorem}{Theorem}[section]
\newtheorem{lemma}{Lemma}[section]
\theoremstyle{definition}
\newtheorem{remark}{Remark}[section]
\newenvironment{Proof}[1][\proofname]
{\proof[\textnormal{\textbf{#1.}}]}{\endproof}
\newcommand{\bp}{\begin{Proof}}
\newcommand{\ep}{\end{Proof}}
\numberwithin{figure}{section}
\numberwithin{equation}{section}
\def\l@section{\@tocline{1}{0pt}{1pc}{}{}}
\def\l@subsection{\@tocline{2}{0pt}{1pc}{4.6em}{}}
\def\l@subsubsection{\@tocline{3}{0pt}{1pc}{7.6em}{}}
\renewcommand{\tocsection}[3]{%
  \indentlabel{\@ifnotempty{#2}{\makebox[2.3em][l]{%
    \ignorespaces#1 #2.\hfill}}}#3}
\renewcommand{\tocsubsection}[3]{%
  \indentlabel{\@ifnotempty{#2}{\hspace*{2.3em}\makebox[2.3em][l]{%
    \ignorespaces#1 #2.\hfill}}}#3}
\renewcommand{\tocsubsubsection}[3]{%
  \indentlabel{\@ifnotempty{#2}{\hspace*{4.6em}\makebox[3em][l]{%
    \ignorespaces#1 #2.\hfill}}}#3}
\begin{document}
\title{Well-Posedness of Initial-Boundary Value Problems
\\
 for a Reaction-Diffusion Equation}
\author{A. Alexandrou  Himonas, Dionyssios Mantzavinos$^*$ \& Fangchi Yan}
\date{October 11, 2018. \mbox{}$^*$\!\textit{Corresponding author}:
mantzavinos@ku.edu}
\begin{abstract}
A reaction-diffusion equation with power nonlinearity formulated either on the half-line or on the finite interval with nonzero boundary conditions is shown to be locally well-posed in the sense of Hadamard for data in Sobolev spaces.
The result is established via a contraction mapping argument, taking advantage of a novel approach that utilizes the formula produced by the unified transform method of Fokas for the forced linear heat equation to obtain linear estimates analogous to those previously derived for the nonlinear Schr\"odinger, Korteweg-de Vries and ``good'' Boussinesq equations. Thus, the present work extends the recently introduced ``unified transform method approach to well-posedness'' from dispersive equations to diffusive ones.
\end{abstract}
\keywords{Reaction-diffusion equation, heat equation, initial-boundary value problems, well-posedness in Sobolev spaces on the half-line and the finite interval, $L^2$-boundedness of Laplace transform, unified transform method of Fokas.}
\subjclass[2010]{Primary: 35K57, 35K20, 35K05}
\maketitle
\markboth
{Well-Posedness of Initial-Boundary Value Problems for a Reaction-Diffusion Equation}
{A. Himonas, D. Mantzavinos \& F. Yan}
%

%
%
%
%
%
%
%
%
\section{Introduction and Results}
We establish local well-posedness of the following reaction-diffusion equation with power nonlinearity formulated on the half-line with a nonzero Dirichlet boundary condition: 
\sss{\label{nh-rd-ibvp}
\ddd{
& u_t - u_{xx} = |u|^{p-1}u, \quad && x \in (0, \infty), \ t \in (0, T), 
\label{nh-rd-eq}
\\
&u(x,0) = u_0(x), && x \in [0, \infty),
\label{nh-ibvp-ic-nh}
\\
&u(0,t) = g_0(t), \quad && t \in [0, T],
\label{nh-ibvp-bc-nh}
}
}
where $p=2, 3, 4, \ldots,$ and $T<1$. 

The Sobolev spaces $H_x^s(0, \infty)$ for the initial datum $u_0$ and $H_t^{\frac{2s+1}{4}}(0, T)$ for the boundary datum $g_0$ of the above initial-boundary value problem (IBVP) are obtained as restrictions of their whole line counterparts according to the general definition
\eee{
H^s(\Omega)
=
\left\{
f: f=F\big|_{\Omega} \text{ with } F\in H^s(\mathbb R)
\right\},
\quad
\Omega\subset \mathbb R.
\nn
}
Furthermore, we shall see that the correspondence $s\leftrightarrow \frac{2s+1}{4}$ between the regularity (in $x$) of  $u_0$ given in \eqref{nh-ibvp-ic-nh} and the regularity (in $t$) of  $g_0$ specified in \eqref{nh-ibvp-bc-nh} is determined by the \textit{linear} part of  equation \eqref{nh-rd-eq} (that is, the linear heat equation) via two independent directions: (i) the space regularity of the linear version of IBVP \eqref{nh-rd-ibvp} with zero initial data, and (ii) the time regularity of the  linear heat initial value problem (IVP) with data in $H_x^s(\mathbb R)$. 
In the latter direction, especially remarkable is the fact that the time regularity of the solution of the linear heat IVP is described by the space $H_t^{\frac{2s+1}{4}}(0, T)$, which is the one also associated with the linear Schr\"odinger IVP (see, for example, \cite{kpv1991, fhm2017}). 
This is rather surprising, taking into account the  sharp contrast between the diffusive nature of the linear heat equation and the dispersive character of the linear Schr\"odinger equation.

The reaction-diffusion equation \eqref{nh-rd-eq} has been studied extensively and from various points of view, see, for example,  \cite{a1979, w1980,  gk1985, g1986b,bcg1998, egkp2004, mm2009, cdw2009b, g2010}, the books \cite{sm1994, h2018} and the references therein.
All of these works are concerned either with the IVP or with IBVPs formulated with a \textit{zero}  boundary condition. In the case of the half-line IBVP  \eqref{nh-rd-ibvp}, this corresponds to taking $g_0\equiv 0$. 
On the contrary, here we consider the case of \textit{nonzero} Dirichlet boundary conditions. 

In fact, the main objective of this work is to take advantage of a novel approach for the well-posedness of nonlinear evolution equations, which relies on the new solution formulae produced by the unified transform method of Fokas \cite{f1997, f2008} for the forced linear counterparts of these equations and which was originally developed for \textit{dispersive} equations such as the nonlinear Schr\"odinger \cite{fhm2017}, the Korteweg-de Vries \cite{fhm2016} and the ``good'' Boussinesq \cite{hm2015} equations. That is, the purpose of this work is to provide the \textit{extension} of the aforementioned new approach from dispersive equations to \textit{diffusive} ones.

For ``smooth'' data ($s>\frac 12$),  local well-posedness for the reaction-diffusion IBVP \eqref{nh-rd-ibvp} will be established in the following sense.
\begin{theorem}[\b{Well-posedness on the half-line for ``smooth'' data}]\label{nh-ibvp-wp-h-t}
Suppose $\frac 12 < s < \frac 32$, $u_0\in H_x^s(0, \infty)$ and 
$g_0\in H_t^{\frac{2s+1}{4}}(0, T)$ with the compatibility condition $u_0(0) = g_0(0)$. 
Furthermore, for  
\ddd{
&\left\|\left(u_0, g_0\right)\right\|_D
=
\no{u_0}_{H_x^s(0,\infty)}
+ 
\no{g_0}_{H_t^{\frac{2s+1}{4}}(0,T)},
\nn\\
&T^*
=
\min
\bigg\{ 
T,  \, 
\frac{1}{p^2 \left(2c_{s, p}\right)^{2p} \no{(u_0, g_0)}_D^{2\left(p-1\right)}}
\bigg\},
\quad c_{s, p}>0,
\nn
}
define  the space $X$ on the half-line by
\ddd{
&X = C\big([0,T^*]; H_x^s(0,\infty)\big)\cap C\big([0,\infty); H_t^{\frac{2s+1}{4}}(0, T^*)\big),
\nn\\
&
\no{u}_X 
= \sup_{t\in [0,T^*]} \no{u(t)}_{H_x^s(0,\infty)} + \sup_{x\in [0, \infty)} \no{u(x)}_{H_t^{\frac{2s+1}{4}}(0, T^*)}.
\nn
}
Then, for $\frac{p-1}{2}\in\mathbb N$ there exists a unique solution $u\in X$ to the reaction-diffusion IBVP \eqref{nh-rd-ibvp}, which satisfies the estimate
\eee{
\no{u}_X
\leqslant 
2 c_{s, p} \left\|\left(u_0, g_0\right)\right\|_D.
\nn
}
Furthermore, the data-to-solution map
$\left\{u_0, g_0 \right\}\mapsto u $
 is locally Lipschitz continuous.
\end{theorem}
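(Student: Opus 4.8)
The plan is to recast the IBVP \eqref{nh-rd-ibvp} as a fixed-point problem for the iteration map built from the Fokas unified transform solution formula of the \emph{forced} linear heat equation, and then to close the argument with the Banach contraction principle on a ball in $X$. Writing $\mathcal{S}[u_0,g_0;f]$ for the unified transform solution operator of the linear heat IBVP with initial datum $u_0$, Dirichlet datum $g_0$ and forcing $f$, I would first invoke the linear estimates, of the form
\eee{
\no{\mathcal{S}[u_0,g_0;f]}_X
\leqslant
c_{s,p}\no{(u_0,g_0)}_D
+
c_{s,p}\,(T^*)^{\frac12}\sup_{t\in[0,T^*]}\no{f(t)}_{H_x^s(0,\infty)},
\nn
}
which are established in the body of the paper and which crucially control \emph{both} constituents of the $X$-norm --- the spatial $\sup_t\no{\cdot}_{H_x^s}$ and the temporal $\sup_x\no{\cdot}_{H_t^{(2s+1)/4}}$ --- with the forcing contribution carrying the positive power $(T^*)^{\frac12}$ that will ultimately absorb the nonlinearity (the hypothesis $T<1$ being used to dominate the several distinct time powers arising in the estimates by this single factor). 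The compatibility condition $u_0(0)=g_0(0)$ enters here, guaranteeing that $\mathcal{S}$ indeed returns an element of $X$ continuous up to the corner. Setting $\Phi(u)=\mathcal{S}[u_0,g_0;|u|^{p-1}u]$, a fixed point of $\Phi$ in $X$ is exactly a solution of \eqref{nh-rd-ibvp}, and every such $\Phi(u)$ automatically satisfies the boundary condition through the operator $\mathcal{S}$.

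The second ingredient is the nonlinear estimate. Since $\frac{p-1}{2}\in\mathbb N$ forces $p$ to be odd, the nonlinearity collapses to the polynomial $|u|^{p-1}u=u^p$, removing the regularity loss that the absolute value would otherwise introduce. Because $s>\frac12$ (and hence also $\frac{2s+1}{4}>\frac12$), both $H_x^s(0,\infty)$ and $H_t^{(2s+1)/4}(0,T^*)$ are Banach algebras; this yields $\no{u^p}\lesssim\no{u}^p$ and, through the factorization $u^p-v^p=(u-v)\sum_{k=0}^{p-1}u^{p-1-k}v^k$, the difference bound $\no{u^p-v^p}\lesssim p\,(\no{u}^{p-1}+\no{v}^{p-1})\no{u-v}$ in the norm feeding the forcing slot above. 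Combining with the linear estimate gives $\no{\Phi(u)}_X\leqslant c_{s,p}\no{(u_0,g_0)}_D+c_{s,p}(T^*)^{\frac12}\no{u}_X^p$ together with a matching Lipschitz inequality carrying an extra factor of $p$.

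It then remains to verify self-mapping and contraction on the ball $B=\{u\in X:\no{u}_X\leqslant R\}$ with $R=2c_{s,p}\no{(u_0,g_0)}_D$. The precise choice $T^*=\min\{T,\,[\,p^2(2c_{s,p})^{2p}\no{(u_0,g_0)}_D^{2(p-1)}\,]^{-1}\}$ is engineered so that $c_{s,p}(T^*)^{\frac12}R^{p-1}\leqslant\frac12$ and the $p$-weighted contraction factor $c_{s,p}(T^*)^{\frac12}\,p\,R^{p-1}$ stays below $1$; indeed, taking the $\frac12$-power of the reciprocal of $T^*$ reproduces exactly $\frac{1}{p(2c_{s,p})^p\no{(u_0,g_0)}_D^{p-1}}$, the threshold demanded by the contraction step. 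Under these, $\Phi$ maps $B$ into itself and is a strict contraction, so Banach's theorem supplies the unique fixed point, which by construction obeys $\no{u}_X\leqslant R=2c_{s,p}\no{(u_0,g_0)}_D$. Local Lipschitz dependence on the data follows by feeding the difference of two solutions --- associated with data $(u_0,g_0)$ and $(\tilde u_0,\tilde g_0)$ --- into the same pair of linear and nonlinear estimates and absorbing the $u,\tilde u$-dependent factor via the smallness already built into $T^*$.

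I expect the genuinely hard part to lie not in this fixed-point scheme, which is largely mechanical once its inputs are in hand, but in the linear estimates on which it rests --- above all the temporal $H_t^{(2s+1)/4}$ bound for the unified transform representation. Controlling the boundary and forcing contributions in the \emph{time} variable demands a careful analysis of the Laplace- and oscillatory-type integrals appearing in the Fokas formula (the $L^2$-boundedness of the Laplace transform flagged in the keywords), and it is precisely there that the striking coincidence of the heat time-regularity exponent $\frac{2s+1}{4}$ with the one for the linear Schr\"odinger equation must be both established and exploited.
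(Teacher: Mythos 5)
Your proposal is correct and follows essentially the same route as the paper's own proof: an iteration map $\Phi u = S[u_0,g_0;u^p]$ built from the UTM linear estimates (with the forcing measured in $\sup_t\no{\cdot}_{H_x^s}$ and carrying the factor $\sqrt{T^*}$), the reduction $|u|^{p-1}u=u^p$ from $\tfrac{p-1}{2}\in\mathbb N$, the algebra property of $H_x^s$ together with the telescoping identity $u_1^p-u_2^p=(u_1^{p-1}+\cdots+u_2^{p-1})(u_1-u_2)$, the same ball radius $2c_{s,p}\no{(u_0,g_0)}_D$ and the same choice of $T^*$ making the contraction factor $\tfrac12$, and Lipschitz continuity of the data-to-solution map by the same estimates on a common lifespan.
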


For ``rough'' data ($s<\frac 12$), we refine our solution space by intersecting $X$ with the space
\eee{\label{nh-cbeta-def}
\begin{split}
C^\alpha([0, T]; L_x^p(0, \infty))
=
\Big\{
u \in C([0, T]; L_x^p(0, \infty)): 
\sup_{t\in [0,T]} t^\alpha \no{u(t)}_{L_x^p(0,\infty)} < \infty
\Big\},
\\
\alpha = \tfrac{1}{p}\left(\tfrac 12 - b\right), 
\
\tfrac{2s+1}{4} < b < \tfrac 12.
\end{split}
}
In this case, our  result reads as follows.
\begin{theorem}[\b{Well-posedness on the half-line for ``rough'' data}]\label{nh-ibvp-wp-l-t}
Suppose  $\tfrac 12 - \tfrac{1}{p} < s<\tfrac 12$, $u_0\in H_x^s(0, \infty)$ and 
$g_0\in H_t^{\frac{2s+1}{4}}(0, T)$, and for $\left\|\left(u_0, g_0\right)\right\|_D$  as in Theorem \ref{nh-ibvp-wp-h-t} and lifespan $T^*$ given by
\eee{
T^*
=
\min\bigg\{T,\,   \frac{1}{\left(2^{p+2}p\right)^{\frac{1}{\alpha}} 
\left(2c_{s, p}\right)^{\frac{p}{\alpha}}  \no{(u_0, g_0)}_D^{\frac{p-1}{\alpha}}}
\bigg\}, \quad c_{s, p}>0,
\nn
}
define the space $Y$ on the half-line by
\ddd{
&Y
=
C\big([0,T^*]; H_x^s(0,\infty)\big)\cap C\big([0,\infty); H_t^{\frac{2s+1}{4}}(0, T^*)\big) \cap C^\alpha([0, T^*]; L_x^p(0, \infty)),
\nn\\
&\no{u}_{Y} 
=
\sup_{t\in [0,T^*]} \no{u(t)}_{H_x^s(0,\infty)} 
+ \sup_{x\in [0, \infty)} \no{u(x)}_{H_t^{\frac{2s+1}{4}}(0, T^*)}
+ \sup_{t\in [0,T^*]} t^\alpha \no{u(t)}_{L_x^p(0,\infty)}.
\nn
}
Then,  the reaction-diffusion IBVP \eqref{nh-rd-ibvp} has a unique solution $u\in Y$, which satisfies the estimate
\eee{
\no{u}_Y
\leqslant 
2c_{s, p} \no{\left(u_0, g_0\right)}_D.
\nn
}
Furthermore, the data-to-solution map
$\left\{u_0, g_0 \right\}\mapsto u $
 is locally Lipschitz continuous.
\end{theorem}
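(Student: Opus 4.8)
The plan is to recast IBVP \eqref{nh-rd-ibvp} as a fixed-point problem for the map furnished by the unified transform solution formula of the forced linear heat equation, and to run a contraction argument in the ball
\[
B = \left\{ u\in Y : \no{u}_Y \leqslant R \right\},\qquad R = 2c_{s,p}\no{(u_0,g_0)}_D.
\]
Denoting by $\Phi(u)$ the unified transform representation of the solution of the linear heat IBVP with initial datum $u_0$, boundary datum $g_0$, and forcing $f=\abs{u}^{p-1}u$, a fixed point $u=\Phi(u)$ is precisely a solution of \eqref{nh-rd-ibvp}. I would prove that, for the lifespan $T^*$ specified in the theorem, $\Phi$ maps $B$ into itself and is a contraction there; Banach's fixed point theorem then yields the unique solution with $\no{u}_Y\leqslant R$, while the difference estimate below gives the local Lipschitz dependence on the data.

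First I would invoke the linear estimates established earlier, which bound each of the three constituents of the $Y$-norm of $\Phi(u)$ by the data norm together with a Duhamel contribution, schematically
\[
\no{\Phi(u)}_Y \leqslant c_{s,p}\no{(u_0,g_0)}_D + c_{s,p}\,\mathcal N\!\left(\abs{u}^{p-1}u\right),
\]
where $\mathcal N$ denotes the norm in which the linear theory controls the forcing. The decisive feature of the rough regime $s<\tfrac12$ is that $H_x^s(0,\infty)$ is not an algebra, so $\abs{u}^{p-1}u$ cannot be estimated directly in $H_x^s$; this is exactly why $Y$ carries the extra weighted component $C^\alpha([0,T^*];L_x^p(0,\infty))$ of \eqref{nh-cbeta-def}. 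Through it the nonlinearity is handled by the clean identity $\no{\abs{u}^{p-1}u}_{L_x^1}=\no{u}_{L_x^p}^p$, and the lower bound $s>\tfrac12-\tfrac1p$ is precisely the Sobolev embedding $H_x^s\hookrightarrow L_x^p$ in one dimension, which guarantees both that the linear evolution of the data lies in $L_x^p$ and that no parity restriction such as $\tfrac{p-1}{2}\in\mathbb N$ (needed in Theorem \ref{nh-ibvp-wp-h-t}) is required, since here only the pointwise bound $\abs{\,\abs{a}^{p-1}a-\abs{b}^{p-1}b\,}\leqslant C_p(\abs{a}^{p-1}+\abs{b}^{p-1})\abs{a-b}$ enters.

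The heart of the matter, and the step I expect to be the main obstacle, is the nonlinear estimate
\[
\mathcal N\!\left(\abs{u}^{p-1}u\right)\leqslant C_p\,(T^*)^{\alpha}\,\no{u}_Y^{p}.
\]
To establish it I would insert the bound $\no{u(t')}_{L_x^p}^p\leqslant (t')^{-p\alpha}\no{u}_Y^p$ — the content of the $C^\alpha L_x^p$ norm — into the Duhamel term and integrate it against the singular time weight coming from the smoothing of the propagator, an integral of Beta type in $t'$. This integral converges because $p\alpha=\tfrac12-b<1$ and the smoothing exponent is likewise $<1$ by the choice $\tfrac{2s+1}{4}<b<\tfrac12$ in \eqref{nh-cbeta-def}, and calibrating the resulting exponents across all three component norms of $Y$ produces exactly the factor $(T^*)^{\alpha}$ that the stated lifespan (with its power $1/\alpha$) is built to absorb. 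The genuinely delicate point is that these smoothing-with-time-weight bounds must be read off not from the classical heat kernel but from the contour-integral unified transform representation, so that the boundary forcing $g_0$ and the nonlinear forcing are estimated on the same footing; reproducing the correct temporal decay from those contour integrals is the main analytic work, and is where the bulk of the earlier linear machinery is spent.

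Granting these two estimates, the self-mapping property follows from
\[
\no{\Phi(u)}_Y \leqslant \tfrac{R}{2} + c_{s,p}C_p(T^*)^{\alpha}R^{p}\leqslant R,
\]
valid on $B$ once $(T^*)^{\alpha}$ lies below the explicit threshold built into the stated lifespan, the numerical constant $2^{p+2}p$ appearing there being precisely the one delivered by this H\"older-based nonlinear estimate. For the contraction I would run the same computation on $\abs{u}^{p-1}u-\abs{v}^{p-1}v$ via the pointwise inequality and H\"older, arriving at
\[
\no{\Phi(u)-\Phi(v)}_Y\leqslant c_{s,p}C_p(T^*)^{\alpha}\bigl(\no{u}_Y^{p-1}+\no{v}_Y^{p-1}\bigr)\no{u-v}_Y\leqslant\tfrac12\no{u-v}_Y
\]
on $B$, again by the choice of $T^*$. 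This produces the unique fixed point $u\in B$ with $\no{u}_Y\leqslant R$. Finally, since $\Phi$ is linear in the data $(u_0,g_0)$, comparing the fixed points of two data sets and absorbing the nonlinear difference through the same contraction factor yields $\no{u-\tilde u}_Y\leqslant 2c_{s,p}\no{(u_0-\tilde u_0,\,g_0-\tilde g_0)}_D$, which is the asserted local Lipschitz continuity of the data-to-solution map.
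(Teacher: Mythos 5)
Your proposal is correct and follows essentially the same route as the paper: the same iteration map $u\mapsto S\big[u_0,g_0;|u|^{p-1}u\big]$ from the UTM formula, a Banach fixed point argument in the ball of radius $2c_{s,p}\no{(u_0,g_0)}_D$ in $Y$, the nonlinearity handled through the $C^\alpha L^p_x$ component via H\"older and the pointwise bound $\big||v|^{p-1}v-|w|^{p-1}w\big|\leqslant c_p(|v|^{p-1}+|w|^{p-1})|v-w|$ (the paper's Lemma \ref{nn-complex-l}, with $c_p=2^{p+1}p$ producing exactly the factor $2^{p+2}p$ in the lifespan), and the same fixed-point comparison for Lipschitz dependence on the data. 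The only inessential discrepancy is one of emphasis: in the paper the weighted-in-time Beta-integral smoothing bounds for the forcing are derived from the whole-line Fourier (Duhamel) representation, while the contour-integral UTM formula is used for the boundary terms and the trace $W|_{x=0}$, rather than both being read off the contour integrals as you suggest.
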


Theorems \ref{nh-ibvp-wp-h-t} and \ref{nh-ibvp-wp-l-t} will be established by showing that the iteration map induced by an appropriate solution representation of the forced linear counterpart of the nonlinear problem is a contraction in a suitably chosen solution space.
In the case of the nonlinear IBVP \eqref{nh-rd-ibvp}, the associated linear problem is the forced linear heat IBVP:
\sss{\label{nh-flh-ibvp}
\ddd{
& u_t - u_{xx} = f(x, t),  \quad && x \in (0, \infty), \ t \in (0, T),   
\label{nh-flh-ibvp-eq}
\\
&u(x,0) = u_0(x),  && x \in [0, \infty),
\label{nh-flh-ibvp-ic}
\\ 
&u(0,t) = g_0(t),  && t \in [0, T].
\label{nh-flh-ibvp-bc}
}
}

Recall that we consider nonzero boundary conditions; hence, the linear IBVP \eqref{nh-flh-ibvp} cannot be converted into an IVP via the reflection method. Moreover, the classical sine transform solution formula for this problem is not convenient for the purpose of estimates due to its oscillatory nature. Thus, a significant obstacle is present already at the very beginning of the contraction mapping argument, namely at the stage of simply specifying the iteration map. This stands in stark contrast with the case of the IVP, for which the linear problem is solved by means of a straightforward application of the Fourier transform.

A novel approach was recently introduced for the well-posedness of IBVPs involving nonlinear evolution equations. This approach bypasses the absence of  Fourier transform in the IBVP setting by utilizing the  unified transform method (UTM) of Fokas  for the explicit solution of forced linear evolution IBVPs \cite{f1997, f2008}. The new approach has already been implemented for the nonlinear Schr\"odinger, the Korteweg-de Vries and the ``good'' Boussinesq equations on the half-line \cite{fhm2017, fhm2016, hm2015}.
These three IBVPs share two things in common: (i) they concern dispersive equations, and (ii) they are formulated on the half-line. 
The present work extends the UTM approach to IBVP well-posedness in two different directions: (i) from \textit{dispersive} to \textit{diffusive} equations, and (ii) from the \textit{half-line} to the  \textit{finite interval}.

In the case of the forced linear IBVP \eqref{nh-flh-ibvp}, UTM yields the solution formula  \eqref{nh-flh-utm-sol-T}. Theorems \ref{nh-ibvp-wp-h-t} and \ref{nh-ibvp-wp-l-t} will be established by deriving the linear estimates necessary for showing that the iteration map obtained by setting $f=|u|^{p-1}u$ in  \eqref{nh-flh-utm-sol-T} has a unique fixed point in the relevant solution spaces.
In fact, the analysis of the half-line problem \eqref{nh-flh-ibvp} can also be exploited to show well-posedness of the following reaction-diffusion IBVP on the finite interval:
\sss{\label{nh-fi-ibvp}
\ddd{
& u_t - u_{xx} = |u|^{p-1}u,   && x \in (0, \ell), \ t \in (0, T),
\label{nh-fi-ibvp-eq-nh}
\\
&u(x,0) = u_0(x)\in H_x^s(0, \ell),  && x \in [0, \ell],
\label{nh-fi-ibvp-ic-nh}
\\[-0.25em]
&u(0,t) = g_0(t)\in H_t^{\frac{2s+1}{4}}(0, T), && t \in [0, T],
\\[-0.25em]
&u(\ell,t) = h_0(t)\in H_t^{\frac{2s+1}{4}}(0, T), \quad && t \in [0, T],
\label{nh-fi-ibvp-bc-nh}
}
}
where $p=2, 3, 4, \ldots,$ and $T<1$. In particular, we shall prove the following result.

\begin{theorem}[\b{Well-posedness on a finite interval  for ``smooth'' data}]\label{nh-fi-ibvp-wp-h-t}
Suppose $\frac 12 < s < \frac 32$,
$u_0\in H_x^s(0, \ell)$, 
$g_0\in H_t^{\frac{2s+1}{4}}(0, T)$ and $h_0\in H_t^{\frac{2s+1}{4}}(0, T)$ with the compatibility conditions $u_0(0) = g_0(0)$ and $u_0(\ell) = h_0(0)$. Furthermore, for
\ddd{
&\left\|\left(u_0, g_0, h_0\right)\right\|_D
=
\no{u_0}_{H_x^s(0,\ell)}
+ 
\no{g_0}_{H_t^{\frac{2s+1}{4}}(0,T)}
+ 
\no{h_0}_{H_t^{\frac{2s+1}{4}}(0,T)},
\nn\\
&T^*
=\min
\bigg\{ 
T,  \, 
\frac{1}{p^2\left(2c_{s, p}\right)^{2p} \no{(u_0, g_0, h_0)}_D^{2\left(p-1\right)}}
\bigg\},
\quad
c_{s, p}>0,
\nn
}
define the space $X$ on the interval by
\ddd{
&X 
= C\big([0,T^*]; H_x^s(0,\ell)\big)\cap C\big([0,\ell]; H_t^{\frac{2s+1}{4}}(0, T^*)\big),
\nn\\
&\no{u}_X 
= \sup_{t\in [0,T^*]} \no{u(t)}_{H_x^s(0,\ell)} + \sup_{x\in [0, \ell]} \no{u(x)}_{H_t^{\frac{2s+1}{4}}(0, T^*)}.
\nn
}
Then, for $\frac{p-1}{2}\in\mathbb N$ there exists a unique solution $u \in X$ to the reaction-diffusion IBVP \eqref{nh-fi-ibvp}, which satisfies the  estimate
\eee{
\no{u}_X
\leqslant 
2c_{s, p} \left\|\left(u_0, g_0, h_0\right)\right\|_D.
\nn
}
Furthermore, the data-to-solution map
$\left\{u_0, g_0, h_0 \right\}\mapsto u $
 is locally Lipschitz continuous.
\end{theorem}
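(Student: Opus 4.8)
The plan is to reduce the finite-interval IBVP \eqref{nh-fi-ibvp} to the already-understood half-line machinery by applying the unified transform method to the forced linear heat equation on $(0,\ell)$, producing an explicit solution formula analogous to \eqref{nh-flh-utm-sol-T} but now involving \emph{two} boundary terms (at $x=0$ and $x=\ell$) in addition to the initial-data contribution and the forcing term. The iteration map is then defined by substituting $f=|u|^{p-1}u$ into this formula, and the goal is to show it is a contraction on the ball of radius $2c_{s,p}\|(u_0,g_0,h_0)\|_D$ inside $X$ over the time interval $[0,T^*]$. Since the structure of the argument is identical to the half-line case of Theorem \ref{nh-ibvp-wp-h-t}, the real content is establishing the \emph{linear estimates} for the interval solution formula in the $X$-norm, namely control of both the $\sup_t H_x^s$ component and the $\sup_x H_t^{(2s+1)/4}$ component.

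\textbf{Step 1 (linear estimates).} First I would decompose the UTM representation on $(0,\ell)$ into its constituent pieces: the homogeneous heat evolution of $u_0$, a boundary-driven term for each of $g_0$ and $h_0$, and the Duhamel-type forcing integral carrying $f$. For each piece I would prove the two-norm linear bounds
\eee{
\no{S[u_0,g_0,h_0;f]}_X \leqslant c_{s,p}\Big(\no{u_0}_{H_x^s(0,\ell)} + \no{g_0}_{H_t^{\frac{2s+1}{4}}(0,T)} + \no{h_0}_{H_t^{\frac{2s+1}{4}}(0,T)} + \sup_{t}\no{f(t)}_{H_x^s(0,\ell)}\Big).
\nn
}
The space-regularity half ($\sup_t H_x^s$) should follow the direction (i) flagged in the introduction, and the time-regularity half ($\sup_x H_t^{(2s+1)/4}$) from direction (ii), with the crucial $s\leftrightarrow\frac{2s+1}{4}$ matching inherited from the linear heat analysis already performed for the half-line. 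The contour deformations intrinsic to UTM must be checked to remain valid in the presence of the second boundary at $x=\ell$; the relevant spectral functions now have a denominator (the analogue of $1-e^{-2k\ell}$ type expressions) whose zeros must be shown not to obstruct the $L^2$-type estimates, and this is where the finite-interval case genuinely differs from the half-line.

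\textbf{Step 2 (nonlinear estimate and contraction).} With the linear estimates in hand, I would control the nonlinearity via the algebra/multiplier estimate for $H_x^s(0,\ell)$ when $s>\tfrac12$, giving $\sup_t\no{|u|^{p-1}u(t)}_{H_x^s} \lesssim \sup_t\no{u(t)}_{H_x^s}^p$ together with the corresponding Lipschitz-difference bound $\no{|u|^{p-1}u-|v|^{p-1}v}\lesssim (\no{u}^{p-1}+\no{v}^{p-1})\no{u-v}$; the restriction $\tfrac{p-1}{2}\in\mathbb N$ guarantees the nonlinearity is smooth enough to differentiate. Composing with the linear bound and absorbing the $T^*$-dependence (the forcing term gains a factor of $T^*$ from the time integration), I would choose $T^*$ exactly as in the theorem statement so that on the ball of radius $2c_{s,p}\|(u_0,g_0,h_0)\|_D$ the map sends the ball into itself and contracts. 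Uniqueness, the stated a priori bound, and local Lipschitz continuity of the data-to-solution map then follow from the contraction estimate in the standard way.

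\textbf{Main obstacle.} The hardest part will be Step 1 for the finite interval: the UTM solution formula on $(0,\ell)$ carries an extra boundary term and, more seriously, involves spectral denominators that are absent on the half-line, so I must verify that the complex-analytic contour deformations go through and that these denominators do not spoil the $L^2$-boundedness (of Laplace-type transforms) underlying the $H_t^{(2s+1)/4}$ estimate. Once the interval analogue of \eqref{nh-flh-utm-sol-T} is shown to satisfy the same linear bounds as the half-line formula, the remainder of the proof is a routine replay of the half-line fixed-point argument.
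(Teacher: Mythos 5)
Your proposal is correct in outline and would very likely go through, but it organizes the key step --- the linear estimates --- differently from the paper, and the difference is worth noting. You propose to take the full interval UTM formula \eqref{nh-fi-flh-utm-sol-T} and estimate each of its constituent terms (initial-data term, two boundary terms, Duhamel forcing term) directly in the $X$-norm; every one of those terms carries the spectral denominator $e^{ik\ell}-e^{-ik\ell}$, so your route requires denominator-controlled bounds (removability of the zero at $k=0$ on $\p D^\pm$, lower bounds of the type $\big|e^{ia^3k\ell}-e^{-ia^3k\ell}\big|\geqslant e^{\frac{\sqrt 2}{2}k\ell}-e^{-\frac{\sqrt 2}{2}k\ell}$ along the rays) for \emph{each} of the data, forcing, and boundary contributions separately. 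The paper avoids nearly all of this: instead of estimating formula \eqref{nh-fi-flh-utm-sol-T} term by term, it decomposes the \emph{solution} as in \eqref{nh-fi-flh-ibvp-sup}, writing $S\big[u_0,g_0,h_0;f\big]$ as the restriction to $[0,\ell]$ of a half-line solution $S\big[U_0|_{[0,\ell]},g_0;F|_{[0,\ell]}\big]$ (with extended data) plus the solution $S\big[0,0,w_0;0\big]$ of the reduced IBVP \eqref{nh-fi-ibvp-r}, whose only nonzero datum is $w_0 = h_0 - S\big[\cdots\big](\ell,\cdot)$ at $x=\ell$. Consequently the contributions of $u_0$, $g_0$ and $f$ are controlled verbatim by the already-proven half-line Theorem \ref{nh-flh-ibvp-wp-t} (no denominators at all), and the interval-specific denominator analysis is confined to the single pure IBVP \eqref{nh-fi-ibvp-rv}, handled in Theorems \ref{nh-fi-ibvp-rv-t} and \ref{nh-fi-ibvp-rv-ca-t}; these combine into Theorem \ref{nh-fi-flh-ibvp-wp-t}, after which the contraction argument is, as you say, a verbatim replay of the half-line case. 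Both routes use the same two workhorses (the exponential-difference Lemma \ref{nh-gamma-l} and the $L^2$-boundedness of the Laplace transform, Lemma \ref{nh-tlap-l}); what the paper's decomposition buys is economy --- one new linear theorem instead of three --- while your direct route, though heavier, is self-contained on the interval and does not require constructing whole-line extensions of $u_0$ and $f$. One caution if you pursue your route: the trace $w_0$ in the paper's reduction automatically lies in $H_t^{\frac{2s+1}{4}}(0,T)$ by the half-line time estimates, whereas in your setup you must verify the analogous temporal regularity bookkeeping for both boundary terms at once, including the compatibility conditions $u_0(0)=g_0(0)$, $u_0(\ell)=h_0(0)$ needed to build the compactly supported extensions of the boundary data when $s>\frac 12$.
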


\vskip 3mm

For ``rough'' data ($s<\frac 12$), in analogy to the half-line we refine our solution space by additionally including the space
\eee{\label{nh-fi-cbeta-def}
C^\alpha([0, T]; L_x^p(0, \ell))
=
\Big\{
u \in C([0, T]; L_x^p(0, \ell)): 
\sup_{t\in [0,T]} t^\alpha \no{u(t)}_{L_x^p(0,\ell)} < \infty
\Big\}
}
with $\alpha$  as in \eqref{nh-cbeta-def}.
In this case, we have the following result.
\begin{theorem}[\b{Well-posedness on the finite interval  for ``rough'' data}]\label{nh-fi-ibvp-wp-l-t}
Suppose  $\tfrac 12 - \tfrac{1}{p} < s<\tfrac 12$, $u_0\in H_x^s(0, \ell)$, 
$g_0\in H_t^{\frac{2s+1}{4}}(0, T)$ and $h_0\in H_t^{\frac{2s+1}{4}}(0, T)$, and for $\left\|\left(u_0, g_0, h_0\right)\right\|_D$  as in Theorem \ref{nh-fi-ibvp-wp-h-t} and   lifespan $T^*$  given by
\eee{
T^*
=
\min\bigg\{T,\,   \frac{1}{\left(2^{p+2}p\right)^{\frac{1}{\alpha}}
\left(2c_{s, p}\right)^{\frac{p}{\alpha}} \no{(u_0, g_0, h_0)}_D^{\frac{p-1}{\alpha}}}
\bigg\}, \quad c_{s, p}>0,
\nn
}
define the space $Y$ on the interval by
\ddd{
&Y
=
C\big([0,T^*]; H_x^s(0,\ell)\big)\cap C\big([0,\ell]; H_t^{\frac{2s+1}{4}}(0, T^*)\big) \cap C^\alpha([0, T^*]; L_x^p(0, \ell))
\nn\\
&\no{u}_{Y} 
= 
\sup_{t\in [0,T^*]} \no{u(t)}_{H_x^s(0,\ell)} 
+ \sup_{x\in [0, \ell]} \no{u(x)}_{H_t^{\frac{2s+1}{4}}(0, T^*)}
+ \sup_{t\in [0,T^*]} t^\alpha \no{u(t)}_{L_x^p(0,\ell)}.
}
Then,  the reaction-diffusion IBVP \eqref{nh-fi-ibvp} has a unique solution  $u\in Y$, which satisfies the estimate
\eee{
\no{u}_Y
\leqslant 
2 c_{s, p} \no{\left(u_0, g_0, h_0\right)}_D.
\nn
}
Furthermore, the data-to-solution map
$\left\{u_0, g_0, h_0 \right\}\mapsto u $
 is locally Lipschitz continuous.
\end{theorem}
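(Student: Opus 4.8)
The plan is to prove Theorem \ref{nh-fi-ibvp-wp-l-t} by running the same contraction mapping scheme as in the half-line rough-data result, Theorem \ref{nh-ibvp-wp-l-t}, now adapted to the finite-interval geometry and to the second boundary datum $h_0$. I would first record the UTM solution formula for the forced linear finite-interval heat IBVP (the finite-interval analog of \eqref{nh-flh-utm-sol-T}) with data $(u_0,g_0,h_0)$ and forcing $f$. Substituting $f = \abs{u}^{p-1}u$ defines the iteration map $\Phi$, and a fixed point of $\Phi$ in $Y$ is precisely a solution of \eqref{nh-fi-ibvp}. Since $s<\tfrac12$ the initial trace need not exist, which is why no compatibility condition appears in the hypotheses.

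The heart of the matter is a triple of linear estimates for $\Phi$, matched to the three components of $\no{\cdot}_Y$: a spatial Sobolev estimate for $\sup_t \no{\Phi(u)(t)}_{H_x^s(0,\ell)}$, a temporal estimate for $\sup_x \no{\Phi(u)(x)}_{H_t^{(2s+1)/4}(0,T^*)}$, and a weighted estimate for $\sup_t t^\alpha \no{\Phi(u)(t)}_{L_x^p(0,\ell)}$. In each case the contribution of the data $(u_0,g_0,h_0)$ is bounded by $c_{s,p}\no{(u_0,g_0,h_0)}_D$, while the contribution of the forcing is bounded by a constant times a suitably weighted time-integral of $\no{f(\tau)}$. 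As the paper already indicates that the finite-interval analysis reduces to the half-line one, I would obtain these estimates by splitting the finite-interval UTM representation into pieces each of which is treated exactly as on the half-line, verifying only that the constants remain uniform.

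The nonlinear input is elementary: the pointwise identity gives $\no{\abs{u}^{p-1}u(\tau)}_{L_x^1} = \no{u(\tau)}_{L_x^p}^p$, together with its Lipschitz counterpart $\no{\abs{u}^{p-1}u - \abs{v}^{p-1}v}_{L_x^1} \lesssim \big(\no{u}_{L_x^p}^{p-1} + \no{v}_{L_x^p}^{p-1}\big)\no{u-v}_{L_x^p}$. Inserting these into the forcing contributions and exploiting the weight $t^\alpha$ with $\alpha = \tfrac1p(\tfrac12 - b)$, $\tfrac{2s+1}4 < b < \tfrac12$, the singular-in-time heat kernel becomes integrable and the time-integral produces a positive power $(T^*)^\theta$ of the lifespan. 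With $T^*$ chosen as in the statement, this forces $\Phi$ to map the ball $\big\{u\in Y:\no{u}_Y\leqslant 2c_{s,p}\no{(u_0,g_0,h_0)}_D\big\}$ into itself and to be a contraction there; the Banach fixed point theorem then yields the unique solution together with the stated estimate, and repeating the difference estimates while varying the data gives local Lipschitz continuity of the data-to-solution map.

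I expect the decisive obstacle to be the nonlinear estimate in the range $s<\tfrac12$, where $H_x^s$ is not an algebra and the nonlinearity cannot be closed directly in the Sobolev norm. This is precisely what necessitates the refinement to $C^\alpha([0,T^*];L_x^p)$: the smoothing of the forced linear heat operator must be paired with the weighted $L^p$-control of $\abs{u}^p$, and the admissible window $\tfrac12-\tfrac1p<s<\tfrac12$ together with the constraint on $b$ is exactly what makes the relevant time-integral converge. A further difficulty specific to the interval is confirming that the two-endpoint UTM terms carrying $g_0$ and $h_0$ obey the same estimates as the single half-line boundary term, with no deterioration of the constant $c_{s,p}$.
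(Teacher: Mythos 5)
Your proposal is correct and follows essentially the same route as the paper: the finite-interval UTM representation, linear estimates matched to the three components of $\no{\cdot}_Y$ (with the interval-specific denominators $e^{ik\ell}-e^{-ik\ell}$ requiring the extra care you flag), the pointwise Lipschitz bound for $|u|^{p-1}u$ combined with H\"older in $L_x^p$, and a contraction in the ball of radius $2c_{s,p}\no{(u_0,g_0,h_0)}_D$ where the factor $(T^*)^\alpha$ drives both the onto and contraction conditions, followed by Banach's fixed point theorem and difference estimates for Lipschitz dependence. The only organizational difference is that the paper does not estimate the full interval formula piecewise: it superposes the half-line solution with extended data $S\big[U_0\big|_{x\in[0,\ell]}, g_0; F\big|_{x\in[0,\ell]}\big]$ (reusing the half-line theorem wholesale) and a reduced interval problem carrying only the corrected right-endpoint datum $w_0 = h_0 - S[\cdot](\ell,\cdot)$, so fresh estimates are needed only for that single boundary term --- a streamlining of the same method rather than a different approach.
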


Theorems \ref{nh-fi-ibvp-wp-h-t} and \ref{nh-fi-ibvp-wp-l-t} for the reaction-diffusion IBVP \eqref{nh-fi-ibvp} on the finite interval will be established by deriving appropriate estimates for the forced linear IBVP
\sss{\label{nh-fi-flh-ibvp}
\ddd{
& u_t - u_{xx} = f(x, t),  \quad && x \in (0, \ell), \ t \in (0, T), 
\label{nh-fi-flh-ibvp-eq-nh}
\\
&u(x,0) = u_0(x),  && x \in [0, \ell],
\label{nh-fi-flh-ibvp-ic-nh}
\\ 
&u(0,t) = g_0(t), && t \in [0, T],
\\ 
&u(\ell,t) = h_0(t),  && t \in [0, T].
\label{nh-fi-flh-ibvp-bc-nh}
}
}
Actually, thanks to the linear estimates obtained for the half-line problem \eqref{nh-flh-ibvp}, it will be sufficient to estimate the finite interval problem \eqref{nh-fi-flh-ibvp} in the reduced case $f = u_0 = g_0 =0$.

We note that apart from the UTM approach to IBVP well-posedness there are also other approaches in the literature, namely the works of Colliander, Kenig and Holmer \cite{ck2002, h2005, h2006} as well as of Bona, Sun and Zhang \cite{bsz2002} for the Korteweg-de Vries and the nonlinear Schr\"odinger equations on the half-line.
Furthermore, for a different treatment of linear and nonlinear evolution IBVPs that combines UTM with inverse scattering techniques we refer the reader to Fokas and Pelloni \cite{fp2000}, Pelloni \cite{p2002, p2004},  Fokas, Its and Sung \cite{fis2005}, Fokas and Lenells \cite{fl2012, lf2012a, lf2012b}, Fokas and Spence \cite{fs2012}, Deconinck, Pelloni and  Sheils \cite{dps2014}, Sheils and Smith \cite{ss2015}, and the references therein.

Finally, we would like to emphasize that one could possibly study the IBVPs 
considered in this work via classical techniques developed specifically for diffusive equations with data in Lipschitz  spaces (see, for example, \cite{a1979, w1980, g1986b, sm1994}), although we were not able to find the results obtained in this work in the classical literature.
However, as mentioned earlier, our objective here is different, namely to demonstrate that the reaction-diffusion equation \eqref{nh-rd-eq} can be included in the theory  originally developed for dispersive equations like NLS \cite{fhm2017}, KdV \cite{fhm2016} and ``good'' Boussinesq \cite{hm2015}, which relies on employing UTM and the $L^2$-boundedness of the Laplace transform (see Lemma \ref{nh-tlap-l}) for deriving novel linear estimates along rays in the \textit{complex} Fourier space.

\vskip 3mm
\noindent
\textbf{Structure of the paper.}
The UTM solution formulae for the forced linear heat equation on the half-line and on the finite interval are derived in Section \ref{nh-utm-s}. Estimates for the linear heat IVP are obtained in Section \ref{nh-ivp-s}, including those that motivate the refined solution spaces of Theorems \ref{nh-ibvp-wp-l-t} and \ref{nh-fi-ibvp-wp-l-t}. The corresponding estimates for the forced linear heat equation on the half-line are derived in Section \ref{nh-ibvp-le-s} and are subsequently employed in Section \ref{nh-lwp-s} for carrying out the contraction mapping argument leading to Theorems \ref{nh-ibvp-wp-h-t} and \ref{nh-ibvp-wp-l-t}.
Finally, the forced linear heat equation on the finite interval is estimated  in Section \ref{nh-fi-ibvp-le-s} and the resulting estimates yield Theorems \ref{nh-fi-ibvp-wp-h-t} and \ref{nh-fi-ibvp-wp-l-t} via a contraction mapping argument analogous to that of Section \ref{nh-lwp-s}.
%
%
%
%
%
%
%
%
\section{Linear Solution Formulae via UTM}
\label{nh-utm-s}

In this section, we employ UTM in order to derive the solution formulae  \eqref{nh-flh-utm-sol-T} and  \eqref{nh-fi-flh-utm-sol-T} for the forced linear IBVPs \eqref{nh-rd-ibvp} and \eqref{nh-fi-ibvp} on the half-line and on the finite interval respectively. The derivations presented here can be found in more detail in \cite{f2008}.

\subsection{The half-line}

Let $\widetilde u$ satisfy the adjoint equation of the linear heat    equation, i.e.
%
$\widetilde u_t+\widetilde u_{xx} = 0$.
Multiplying this equation by $u$ and adding it to the forced linear  heat equation \eqref{nh-flh-ibvp-eq} multiplied by $\tilde u$, we arrive at the divergence form
$
\left(\widetilde u u\right)_t-\left(\widetilde u u_x-\widetilde u _xu\right)_x
=
\widetilde u  f. 
$
Setting $\widetilde u(x, t)=e^{-ikx+k^2t}$ and integrating with respect to  $x$ and $t$ yields the \textit{global relation}
\eee{\label{nh-gr}
e^{k^2t}\, \widehat u (k,t)
=
\widehat u_0(k)
-
\left[\widetilde g_1(k^2, t) + ik \widetilde g_0(k^2, t) \right]
+
\int_{t'=0}^t e^{k^2t'} \widehat f (k,t') dt',
\quad \text{Im}(k) \leqslant 0,
}
where 
\eee{\nn
\widehat u (k,t)
=
\int_{x=0}^\infty  e^{-ikx} u(x,t)dx,
\quad 
\widetilde g_j(k^2, t) = \int_{t'=0}^t e^{k^2t'} \p_x^j u(0, t')dt', \ j=0,1,
}
and the half-line Fourier transforms $\what u_0$, $\what f$ are defined analogously to $\what u$. Note that these transforms are analytic for $\text{Im}(k)<0$ thanks to the boundedness of $e^{ikx}$ for all $x>0$ and a Paley-Wiener-type theorem (see, for example, Theorem 7.2.4 in \cite{s1994}).
Inverting \eqref{nh-gr}, we obtain  
\ddd{
\label{nh-ir}
u(x,t)
&=
\frac{1}{2\pi}\int_{k\in\mathbb R} e^{ikx-k^2t}\, \widehat u_0(k) dk
+
\frac{1}{2\pi}\int_{k\in\mathbb R} e^{ikx-k^2t} \int_{t'=0}^t e^{k^2 t'} \widehat f(k, t') dt' dk
\nn\\
&\quad
-
\frac{1}{2\pi}\int_{k\in\mathbb R} e^{ikx-k^2t} \left[\widetilde g_1(k^2,t)+ik\widetilde g_0(k^2,t) \right]dk.
}

The integral representation \eqref{nh-ir} involves the unknown Neumann boundary value $u_x(0, t)$ through the term $\widetilde g_1$. However, it turns out  that this term can be eliminated.
Indeed, since $x\geqslant 0$ and $t\geqslant t'$, the exponential $e^{ikx-k^2(t-t')}$ is bounded whenever $0\leqslant \text{Im}(k) \leqslant \left|\text{Re}(k)\right|$. Thus, thanks to the analyticity and exponential decay  in $k$, Cauchy's theorem implies
\ddd{\label{nh-ir-def}
u(x,t)
&=
\frac{1}{2\pi}\int_{k\in\mathbb R} e^{ikx-k^2t} \, \widehat u_0(k) dk
+
\frac{1}{2\pi}\int_{k\in\mathbb R} e^{ikx-k^2t} \int_{t'=0}^t e^{ik^2 t'} \widehat f(k, t') dt' dk
\nn\\
&\quad
-
\frac{1}{2\pi}\int_{k\in \p D^+} e^{ikx-k^2t} \left[\widetilde g_1(k^2,t)+ik\widetilde g_0(k^2,t) \right]dk,
}
where  $\p D^+$ is the positively oriented boundary of the region $D^+=
\left\{k\in\mathbb C: \text{Im}(k) \geqslant \left|\text{Re}(k)\right|\right\}$, as shown in Figure \ref{nh-fi-flh-dpm}.

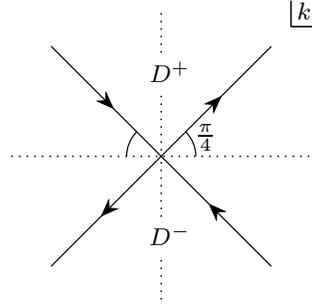
\begin{figure}[ht]
\centering
\vspace{1.3cm}
\begin{tikzpicture}[scale=1.15]
\pgflowlevelsynccm
\draw[line width=.5pt, black, dotted](1.8,0)--(-1.8,0);
\draw[line width=.5pt, black, dotted](0,0.7)--(0,0);
\draw[line width=.5pt, black, dotted](0,1.2)--(0,1.7);
\draw[line width=.5pt, black](1.5,1.8)--(1.5,1.5);
\draw[line width=.5pt, black](1.5,1.5)--(1.8,1.5);
\draw[line width=.5pt, black, dotted](0, -0.7)--(0,0);
\draw[line width=.5pt, black, dotted](0, -1.2)--(0, -1.7);
\node[] at (1.59, 1.685) {\fontsize{8}{8} $k$};
\draw[middlearrow={Stealth[scale=1.3, reversed]}] (0,0) -- (135:1.8);
\draw[middlearrow={Stealth[scale=1.3]}] (0,0) -- (45:1.8);
\draw[middlearrow={Stealth[scale=1.3]}] (0,0) -- (225:1.8);
\draw[middlearrow={Stealth[scale=1.3, reversed]}] (0,0) -- (315:1.8);
\node[] at (0.1, 0.98) {\fontsize{8}{10}\it $D^+$};
\node[] at (0.1, -0.91) {\fontsize{8}{10}\it $D^-$};
\draw (4mm,0) arc (0:45:4mm);
\draw (-4mm,0) arc (180:135:4mm);
\node[] at (0.45, 0.22) {\fontsize{8}{8} $\frac \pi 4$};
\end{tikzpicture}
\vspace{18mm}
\caption{The regions $D^\pm$ and their positively oriented boundaries $\p D^\pm$.}
\label{nh-fi-flh-dpm}
\end{figure}

Moreover, the transformation $k\mapsto -k$ applied to the global relation \eqref{nh-gr} implies the identity
\eee{\nn
e^{k^2t}\, \widehat u (-k,t)
=
\widehat u_0(-k)
- \left[\widetilde g_1(k^2, t) - ik \widetilde g_0(k^2, t) \right]
+\int_{t'=0}^t e^{k^2t'} \widehat f (-k,t') dt',
\quad \text{Im}(k) \geqslant 0.
}
Solving for   $\widetilde g_1$ and inserting the resulting expression in \eqref{nh-ir-def} yields the solution of IBVP \eqref{nh-flh-ibvp} as
\ddd{
\label{nh-flh-utm-sol-t}
u(x,t)
&=
\frac{1}{2\pi}\int_{k\in\mathbb R} e^{ikx-k^2t}\, \widehat u_0(k) dk
-
\frac{1}{2\pi}\int_{k\in\p D^+} e^{ikx-k^2t}\, \widehat u_0(-k) dk
\nn\\
&\quad
+
\frac{1}{2\pi}\int_{k\in\mathbb R} e^{ikx-k^2t} \int_{t'=0}^t e^{k^2 t'} \widehat f(k, t') dt' dk
-
\frac{1}{2\pi}\int_{k\in \p D^+} e^{ikx-k^2t} \int_{t'=0}^t e^{k^2 t'} \widehat f(-k, t') dt' dk
\nn\\
&\quad
-
\frac{i}{\pi}\int_{k\in \p D^+} e^{ikx-k^2t} \, k\widetilde g_0(k^2,t) dk,
}
where we have made crucial use of the fact that
$
\int_{k\in \p D^+}e^{ikx} \, \widehat u(-k,t)dk = 0
$
due to analyticity and exponential decay of the integrand  in  $D^+$.
Finally, exploiting once again analyticity and exponential decay in $D^+$, we observe that
$
\int_{k\in \p D^+} e^{ikx-k^2t} \int_{t'=t}^T e^{k^2 t'} u(0, t') dt' dk = 0.
$
Hence, the solution formula \eqref{nh-flh-utm-sol-t} can be written in the following convenient form for deriving linear estimates:
\ddd{\label{nh-flh-utm-sol-T}
u(x,t)
&=
S\big[u_0, g_0; f\big](x, t)
\nn\\
&=
\frac{1}{2\pi}\int_{k\in\mathbb R} e^{ikx-k^2t}\, \widehat u_0(k) dk
-
\frac{1}{2\pi}\int_{k\in\p D^+} e^{ikx-k^2t}\, \widehat u_0(-k) dk
\nn\\
&\quad
+
\frac{1}{2\pi}\int_{k\in\mathbb R} e^{ikx-k^2t} \int_{t'=0}^t e^{k^2 t'} \widehat f(k, t') dt' dk
-
\frac{1}{2\pi}\int_{k\in \p D^+} e^{ikx-k^2t} \int_{t'=0}^t e^{k^2 t'} \widehat f(-k, t') dt' dk
\nn\\
&\quad
-
\frac{i}{\pi}\int_{k\in \p D^+} e^{ikx-k^2t} \, k\widetilde g_0(k^2, T) dk,
\qquad
\widetilde g_0(k^2, T) \doteq \int_{t=0}^T e^{k^2 t} g_0(t) dt.
}

%
%
%
%
%
%
%
%
\subsection{The finite interval}
Integrating the divergence form 
$
\left(\widetilde u u\right)_t-\left(\widetilde u u_x-\widetilde u _xu\right)_x
=
\widetilde u  f
$
with respect to $x$ and $t$, we obtain the  global relation
\ddd{\label{nh-fi-gr}
e^{k^2t} \widehat u(k, t)
=
\widehat u_0(k)
+
e^{-ik\ell} \left[\widetilde h_1(k^2, t) + ik \widetilde h_0(k^2, t)\right]
&- 
\left[\widetilde g_1(k^2, t) + ik \widetilde g_0(k^2, t) \right] 
\nn\\
&+
\int_{t'=0}^t 
e^{k^2 t'} \widehat f(k, t') dt', 
\quad k\in\mathbb C,
}
where $\widehat u(k, t) = \int_{x=0}^\ell e^{-ikx} u(x, t) dx$ with $\what u_0$ and $\what f$ defined similarly, and 
\eee{\nn
\widetilde g_j(k^2, t) = \int_{t'=0}^t e^{k^2 t'} \p_x^ju(0, t')dt',
\quad
\widetilde h_j(k^2, t) = \int_{t'=0}^t e^{k^2 t'} \p_x^j u(\ell, t')dt',
\quad j=0, 1.
}
Note that the above spatial transforms make sense for all $k\in\mathbb C$ since $x\in [0, \ell]$.
Inverting \eqref{nh-fi-gr} gives rise to the integral representation
\ddd{\label{nh-fi-ir}
u(x, t)
&=
\frac{1}{2\pi}\int_{k\in\mathbb R} e^{ikx-k^2t} \widehat u_0(k) dk
+
\frac{1}{2\pi}\int_{k\in\mathbb R} e^{ikx-k^2t} \int_{t'=0}^t e^{k^2t'} \widehat f(k, t') dt' dk
\nn\\
&\quad
-
\frac{1}{2\pi}\int_{k\in\mathbb R} e^{ikx-k^2 t}
\left[\widetilde g_1(k^2, t)+ik \widetilde g_0(k^2, t)\right]dk
\nn\\
&\quad
+
\frac{1}{2\pi}\int_{k\in\mathbb R} e^{ik(x-\ell)-k^2 t}
\left[\widetilde h_1(k^2, t)+ik \widetilde h_0(k^2, t)\right]dk.
}
The unknown Neumann values $u_x(0, t)$ and $u_x(\ell, t)$, which are contained in $\widetilde g_1$ and $\widetilde h_1$, can be eliminated from  the above representation similarly to the half-line case. In particular, observe that the exponentials $e^{ikx-k^2(t-t')}$ and $e^{ik(x-\ell)-k^2(t-t')}$  are bounded in  $\left\{\text{Im}(k)\geqslant 0\right\} \setminus D^+$  and  $\left\{\text{Im}(k)\leqslant 0\right\} \setminus D^-$ respectively, where $D^+$ is defined as in the half-line and $D^- = \left\{k\in\mathbb C:\text{Im}(k) < -\left|\text{Re}(k)\right|\right\}$.
Hence, using Cauchy's theorem we can deform the contours of integration in the second and third integrals of \eqref{nh-fi-ir} to the positively oriented boundaries $\p D^\pm$  of $D^\pm$  to obtain
\ddd{\label{nh-fi-ir-def}
u(x, t)
&=
\frac{1}{2\pi}\int_{k\in\mathbb R} e^{ikx-k^2t} \widehat u_0(k) dk
+
\frac{1}{2\pi}\int_{k\in\mathbb R} e^{ikx-k^2t} \int_{t'=0}^t e^{k^2t'} \widehat f(k, t') dt' dk
\nn\\
&\quad
-
\frac{1}{2\pi}\int_{k\in\p D^+} e^{ikx-k^2 t}
\left[\widetilde g_1(k^2, t)+ik \widetilde g_0(k^2, t)\right]dk
\nn\\
&\quad
-
\frac{1}{2\pi}\int_{k\in\p D^-} e^{ik(x-\ell)-k^2 t}
\left[\widetilde h_1(k^2, t)+ik \widetilde h_0(k^2, t)\right]dk.
}
Under the transformation $k\mapsto -k$, the global relation \eqref{nh-fi-gr} yields the additional identity
\ddd{\label{nh-fi-gr-sym}
e^{k^2t} \widehat u(-k, t)
=
\widehat u_0(-k)
+
e^{ik\ell} \left[\widetilde h_1(k^2, t)- ik \widetilde h_0(k^2, t)\right]
&- 
\left[\widetilde g_1(k^2, t) - ik \widetilde g_0(k^2, t) \right] 
\nn\\
&+
\int_{t'=0}^t 
e^{k^2 t'} \widehat f(-k, t') dt', 
\quad k\in\mathbb C.
}
Solving \eqref{nh-fi-gr} and \eqref{nh-fi-gr-sym} for $\widetilde g_1$  and $\widetilde h_1$, and noting that the terms $\widehat u(k, t)$ and $\widehat u(-k, t)$ involved in the resulting expressions yield zero contribution when inserted in \eqref{nh-fi-ir-def} thanks to analyticity and exponential decay, we obtain the  solution of IBVP \eqref{nh-fi-flh-ibvp} in the form
\ddd{\label{nh-fi-flh-utm-sol-T}
u(x, t)
&=
S\big[u_0, g_0, h_0; f\big](x, t)
\nn\\
&=
\frac{1}{2\pi}\int_{k\in\mathbb R} e^{ikx-k^2t} \widehat u_0(k) dk
-
\frac{1}{2\pi}\int_{k\in\p D^+} 
\frac{e^{ikx-k^2 t}}{e^{ik\ell}-e^{-ik\ell}}
\left[e^{ik\ell}\widehat u_0(k)-e^{-ik\ell} \widehat u_0(-k)\right]dk
\nn\\
&\quad
-
\frac{1}{2\pi}\int_{k\in\p D^-} 
\frac{e^{ik(x-\ell)-k^2 t}}{e^{ik\ell}-e^{-ik\ell}}
\left[\widehat u_0(k)- \widehat u_0(-k)\right]dk
+
\frac{1}{2\pi}\int_{k\in\mathbb R} e^{ikx-k^2t} \int_{t'=0}^t e^{k^2t'} \widehat f(k, t') dt' dk
\nn\\
&\quad
-
\frac{1}{2\pi}\int_{k\in\p D^+} 
\frac{e^{ikx-k^2 t}}{e^{ik\ell}-e^{-ik\ell}}
\left[e^{ik\ell} \int_{t'=0}^t e^{k^2t'} \widehat f(k, t') dt' -e^{-ik\ell} \int_{t'=0}^t  e^{k^2t'} \widehat f(-k, t') dt'\right]dk
\nn\\
&\quad
-
\frac{1}{2\pi}\int_{k\in\p D^-} 
\frac{e^{ik(x-\ell)-k^2 t}}{e^{ik\ell}-e^{-ik\ell}}
\left[\int_{t'=0}^t e^{k^2t'} \widehat f(k, t') dt'-\int_{t'=0}^t e^{k^2t'} \widehat f(-k, t') dt'\right]dk
\nn\\
&\quad
-
\frac{1}{2\pi}\int_{k\in\p D^+} 
\frac{e^{ikx-k^2 t}}{e^{ik\ell}-e^{-ik\ell}}
\left[-2ik e^{-ik\ell} \widetilde g_0(k^2, T)+2ik \widetilde h_0(k^2,T)\right]dk
\nn\\
&\quad
-
\frac{1}{2\pi}\int_{k\in\p D^-} 
\frac{e^{ik(x-\ell)-k^2 t}}{e^{ik\ell}-e^{-ik\ell}}
\left[-2ik  \widetilde g_0(k^2, T)+2ik e^{ik\ell}\, \widetilde h_0(k^2, T)\right]dk,
}
where, similarly to the half-line, we have  used analyticity and exponential decay in $D^\pm$ to replace the transforms $\widetilde g_0(k^2, t)$ and $\widetilde h_0(k^2, t)$ by
\eee{
\widetilde g_0(k^2, T) = \int_{t=0}^T e^{k^2 t} g_0(t) dt,
\quad
\widetilde h_0(k^2, T) = \int_{t=0}^T e^{k^2 t} h_0(t) dt.
\nn
}
%

%
%
%
%
%
%
%
%
%
%
\section{Linear IVP Estimates}
\label{nh-ivp-s}

In this section, we write the forced linear IBVP \eqref{nh-rd-ibvp} as a combination of simpler IVPs and IBVPs that involve only one piece of data at a time and hence are easier to handle than the full problem. Then, we proceed to the estimation of the IVP component problems.

\subsection{Decomposition into simpler problems}
\label{nh-dec-ss}
Let  $U_0\in H_x^s(\mathbb R)$ be an extension of the initial datum $u_0\in H_x^s(0, \infty)$ such that
\eee{\label{nh-uU}
\no{U_0}_{H_x^s(\mathbb R)} \leqslant c\no{u_0}_{H_x^s(0, \infty)}, \quad s\geqslant 0, \ c\geqslant 1. 
}
Furthermore, since we are working with power nonlinearities, let the forcing of \eqref{nh-flh-ibvp} be of the form
$f(x, t) = \prod_{j=1}^p f_j(x, t)$ with  the whole line extension  
$F(x, t) = \prod_{j=1}^p F_j(x, t)$ defined as follows:
\vskip 2mm
\noindent
(i) For $s>\frac 12$, $F_j \in C\left(\left[0, T\right]; H_x^s(\mathbb R)\right)$ are extensions of $f_j\in C\big([0, T]; H_x^s(0,\infty)\big)$ such that
$$
\sup_{t\in [0, T]} \no{F_j(t)}_{H_x^s(\mathbb R)}
\leqslant
c \sup_{t\in [0, T]}  \no{f_j(t)}_{H_x^s(0, \infty)}, \quad c\geqslant 1, \ 1\leqslant j \leqslant p.
$$
Then, using the algebra property in $H_x^s(\mathbb R)$ we have
\eee{\label{nh-Ff-se}
\sup_{t\in [0, T]} \no{F(t)}_{H_x^s(\mathbb R)}
\leqslant
c_{s, p}  \prod_{j=1}^p \sup_{t\in [0, T]}   \no{f_j(t)}_{H_x^s(0, \infty)}.
}
\noindent
(ii) For $s\leqslant \frac 12$,  the functions $F_j$ are the extensions of the functions $f_j$ by zero outside $(0, \infty)$ so that by H\"older's inequality 
%
\eee{\label{nh-Ff-ca-est}
\no{F}_{C^{p\alpha}([0, T]; L_x^1(\mathbb R))}
\leqslant
\prod_{j=1}^p \no{f_j}_{C^\alpha([0, T]; L_x^p(0, \infty))}.
}
For above-defined $U_0$ and $F$, we write IBVP  \eqref{nh-flh-ibvp} as the superposition of the following problems:
\vskip 2mm
\noindent
\textbf{I.} The homogeneous linear IVP
\sss{\label{nh-lh-ivp}
\ddd{
&U _t -  U _{xx}=0,  && x\in \mathbb R,\ t\in (0, T),
\label{nh-lh-ivp-eq}\\
&U (x,0)= U_0(x)\in H_x^s(\mathbb R), \quad && x\in \mathbb R, \label{nh-lh-ivp-ic}
}
}
which can be solved via the whole line Fourier transform $\widehat U(\xi, t) = \int_{x\in \mathbb R} e^{-i\xi x}\, U(x, t) dx$ to yield 
\eee{
\label{nh-lh-ivp-sol}
U (x,t) = S\big[U_0; 0\big] (x, t) = \frac{1}{2\pi} \int_{\xi\in \mathbb R} e^{i\xi x-\xi^2t}\, \widehat U_0(\xi) d\xi.
}
\noindent
\textbf{II.} The forced linear IVP with zero initial condition
\sss{\label{nh-flh-ivp}
\ddd{
&W_t -  W_{xx}= F(x, t),\quad  && x\in \mathbb R,\ t\in (0, T),\label{nh-flh-ivp-eq}\\
& W(x,0)=0, && x\in \mathbb R, \label{nh-flh-ivp-ic}
}
}
whose solution is found via  the whole line Fourier transform $\widehat W(\xi, t) = \int_{x\in \mathbb R} e^{-i\xi x}\, W(x, t) dx$ as
\sss{\label{nh-flh-ivp-sol-comb}
\ddd{
W(x,t) 
= 
S\big[0; F\big](x, t)
&=
\frac{1}{2\pi} \int_{\xi\in \mathbb R} \int_{t'=0}^t  e^{i\xi x-\xi^2(t-t')}
\widehat F(\xi, t') dt' d\xi
\label{nh-flh-ivp-sol}
\\
&=
\int_{t'=0}^t  S\big[F(\cdot, t'); 0\big](x, t-t') dt'.
\label{nh-flh-ivp-sol-d}
}
}
\noindent
\textbf{III.} The homogeneous linear IBVP with zero initial condition
\sss{\label{nh-lh-ibvp-r}
\ddd{
&u_t-u_{xx} = 0, &&x\in (0,\infty), \ t\in (0,T),\label{nh-lh-ibvp-r-eq} \\
&u(x,0)= 0, &&x\in [0,\infty),  \label{nh-lh-ibvp-r-ic} \\
&u(0,t) = g_0(t) - U(0,t) \doteq G_0(t), \quad &&t\in [0, T], \label{nh-lh-ibvp-r-bc}
}
}
with solution $u=S\big[0, G_0; 0\big]$ given by the UTM formula \eqref{nh-flh-utm-sol-T} after setting $u_0=f=0$.
\vskip 2mm
\noindent
\textbf{IV.}  The homogeneous linear IBVP with  zero initial condition
\sss{\label{nh-flh-ibvp-r}
\ddd{
&u_t - u_{xx} = 0, &&x\in (0,\infty), \ t\in (0,T),
\label{nh-nls-hl-ls-fibvp} \\
&u(x,0)= 0, &&x\in [0,\infty),  \label{nh-nls-hl-fibvp-ic} \\
&u(0,t) = W(0, t), \quad &&t\in [0, T], 
\label{nh-nls-hl-w0-def}
}
}
with solution $u=S\big[0, W|_{x=0}; 0\big]$ given by the UTM formula \eqref{nh-flh-utm-sol-T} after setting $u_0=f=0$.

In summary, the UTM solution  \eqref{nh-flh-utm-sol-T} of the forced linear IBVP \eqref{nh-flh-ibvp} has been expressed as
\eee{\label{nh-flh-ibvp-sup}
S\big[u_0, g_0; f\big] 
=
S\big[U_0; 0\big]\Big|_{x\in (0, \infty)}
+
S\big[0; F\big]\Big|_{x\in (0, \infty)}
+
S\big[0, G_0; 0\big]
-
 S\big[0,  W|_{x=0}; 0\big],
}
where the four quantities on the right-hand side are the solutions of problems \eqref{nh-lh-ivp}, \eqref{nh-flh-ivp}, \eqref{nh-lh-ibvp-r} and \eqref{nh-flh-ibvp-r} respectively. 
In the remaining of the current section, we shall obtain estimates for the first two components of the superposition \eqref{nh-flh-ibvp-sup}, namely for the linear IVPs \eqref{nh-lh-ivp} and \eqref{nh-flh-ivp}. Then, in Section \ref{nh-ibvp-le-s} we shall derive corresponding estimates for the linear IBVPs \eqref{nh-lh-ibvp-r} and \eqref{nh-flh-ibvp-r}.
%
%
%
%
%
%
%
%
%
%
\subsection{Sobolev-type estimates for the homogeneous linear IVP} 
We begin the analysis of the  components of \eqref{nh-flh-ibvp-sup} by estimating the solution of the homogeneous linear IVP \eqref{nh-lh-ivp} in Sobolev spaces.
\begin{theorem}[\b{Sobolev-type estimates for the homogeneous linear IVP}]
\label{nh-ivp-t}
The solution $U=S\big[U_0; 0\big]$ of the linear heat IVP \eqref{nh-lh-ivp} given by formula \eqref{nh-lh-ivp-sol} admits the estimates
\ddd{
\no{U}_{C([0, T]; H_x^s(\mathbb R))}
&\leqslant
\no{U_0}_{H_x^s(\mathbb R)},  && s\in \mathbb R,
\label{nh-ivp-se}
\\
\no{U}_{C(\mathbb R_x; H_t^{\frac{2s+1}{4}}(0,T))}
&\leqslant
c_s \no{U_0}_{H_x^s(\mathbb R)}, \quad &-&\tfrac 12 \leqslant s<\tfrac 32.
\label{nh-ivp-te}
}
\end{theorem}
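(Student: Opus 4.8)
The plan is to prove the two estimates by entirely different means, since the spatial bound \eqref{nh-ivp-se} is elementary while the temporal bound \eqref{nh-ivp-te} carries all the difficulty. For \eqref{nh-ivp-se} I would take the $x$-Fourier transform of the solution \eqref{nh-lh-ivp-sol}, obtaining $\widehat U(\xi,t)=e^{-\xi^2 t}\widehat U_0(\xi)$. Since $t\geqslant 0$ gives $0<e^{-\xi^2 t}\leqslant 1$, Plancherel's theorem yields $\no{U(t)}_{H_x^s(\mathbb R)}^2=\int_{\mathbb R}(1+\xi^2)^s e^{-2\xi^2 t}|\widehat U_0(\xi)|^2 d\xi\leqslant \no{U_0}_{H_x^s(\mathbb R)}^2$ for every $t\in[0,T]$ and every $s\in\mathbb R$, while continuity of $t\mapsto U(t)$ into $H_x^s(\mathbb R)$ follows from dominated convergence. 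No restriction on $s$ is needed here.

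For the temporal bound, set $b=\frac{2s+1}{4}$, so that $0\leqslant b<1$ in the stated range. Since $H_t^b(0,T)$ is defined by restriction, it suffices to produce, uniformly in $x$, an extension of $t\mapsto U(x,t)\big|_{(0,T)}$ to $\mathbb R$ whose $H_t^b(\mathbb R)$-norm is controlled by $\no{U_0}_{H_x^s(\mathbb R)}$. I would split $\widehat U_0$ into the low frequencies $|\xi|\leqslant 1$ and the high frequencies $|\xi|>1$. For the low part I multiply by a fixed cutoff $\psi\in C_c^\infty(\mathbb R)$ with $\psi\equiv 1$ on $[0,T]$; because $|\xi|\leqslant 1$ keeps $\psi(t)e^{-\xi^2 t}$ bounded in every $C^k$ uniformly in $\xi$, Minkowski's integral inequality and Cauchy--Schwarz reduce this piece to $\int_{|\xi|\leqslant 1}|\widehat U_0(\xi)|d\xi\lesssim \no{U_0}_{H_x^s(\mathbb R)}$, which is harmless for any $s$.

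The essential point is the high-frequency piece, and the natural device is the even-in-time extension: I replace $e^{-\xi^2 t}$ by $e^{-\xi^2|t|}$, which agrees with it on $[0,T]$ and is globally integrable in $t$ for $|\xi|>1$. Extension by zero would insert a jump at $t=0$ (as $U(x,0)=U_0(x)\not\equiv 0$), obstructing $H_t^b$ for $b\geqslant\frac12$; the even extension is continuous there and stays within reach for all $b<1$. Its time-Fourier transform being $\frac{2\xi^2}{\xi^4+\tau^2}$, the resulting extension $V(x,\cdot)$ of the high part satisfies $\widetilde V(x,\tau)=\frac{1}{2\pi}\int_{|\xi|>1}e^{i\xi x}\widehat U_0(\xi)\frac{2\xi^2}{\xi^4+\tau^2}d\xi$. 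Inserting this into $\int_{\mathbb R}(1+\tau^2)^b|\widetilde V(x,\tau)|^2 d\tau$, splitting the weight as $1=(1+\xi^2)^{s/2}(1+\xi^2)^{-s/2}$ inside a Cauchy--Schwarz in $\xi$, and applying Fubini, I reduce the whole piece to $\no{U_0}_{H_x^s(\mathbb R)}^2$ provided I verify the uniform-in-$\xi$ bound $\int_{\mathbb R}(1+\tau^2)^b A(\tau)\frac{\xi^2}{\xi^4+\tau^2}d\tau\lesssim 1$, where $A(\tau)=\int_{|\xi|>1}(1+\xi^2)^{-s}\frac{\xi^2}{\xi^4+\tau^2}d\xi$. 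A scaling analysis ($\xi=\sqrt{|\tau|}\,u$) shows $A(\tau)\sim|\tau|^{-s-\frac12}$ for large $|\tau|$ and $A(\tau)=O(1)$ near $0$, the convergence of the underlying $u$-integral being exactly the constraint $-\frac12<s<\frac32$; then the identity $2b=s+\frac12$ forces $(1+\tau^2)^b A(\tau)=O(1)$, after which $\int_{\mathbb R}\frac{\xi^2}{\xi^4+\tau^2}d\tau=\pi$ closes the bound uniformly in $\xi$.

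I expect this high-frequency step to be the main obstacle, as it is where the admissible range of $s$ is dictated and where the correspondence $s\leftrightarrow\frac{2s+1}{4}$ emerges from the exact cancellation of powers of $\tau$. Two loose ends remain. First, the crude Cauchy--Schwarz splitting degenerates precisely at $s=-\frac12$, where $A(\tau)$ diverges logarithmically; there $b=0$ and I would instead bound $\int_0^\infty|U(x,t)|^2 dt$ directly, writing it via Fubini as a bilinear form in $\widehat U_0$ with kernel $\frac{1}{\xi^2+\xi'^2}$ and controlling it through a Schur test with the weight $(1+\xi^2)^{-1/4}$, which returns exactly $\no{U_0}_{H_x^{-1/2}(\mathbb R)}^2$. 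Second, continuity of $x\mapsto U(x,\cdot)\in H_t^b(0,T)$ follows from dominated convergence applied to the same representations, completing \eqref{nh-ivp-te}.
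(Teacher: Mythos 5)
Your treatment of the space estimate \eqref{nh-ivp-se} coincides with the paper's. For the time estimate \eqref{nh-ivp-te} you take a genuinely different route, and on the open range $-\tfrac12<s<\tfrac32$ it is correct: the paper works with the physical-space form of the $H_t^m(0,T)$-norm ($L_t^2$ plus the Gagliardo seminorm), substitutes $\xi=\sqrt\tau$, and invokes the $L^2$-boundedness of the Laplace transform (Lemma \ref{nh-tlap-l}), whereas you use the restriction definition of $H_t^b(0,T)$, build an explicit extension (smooth cutoff on the low frequencies, even-in-time reflection $e^{-\xi^2|t|}$ on the high frequencies), and estimate the time-Fourier transform $\tfrac{2\xi^2}{\xi^4+\tau^2}$ of the latter by Cauchy--Schwarz, Fubini, and a uniform kernel bound. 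Your bookkeeping is right: $A(\tau)\sim|\tau|^{-s-\frac12}$ exactly under the constraint $-\tfrac12<s<\tfrac32$, the identity $2b=s+\tfrac12$ cancels the growth of the weight, and $\int_{\mathbb R}\xi^2(\xi^4+\tau^2)^{-1}d\tau=\pi$ closes the estimate uniformly in $\xi$. This Kenig--Ponce--Vega-style argument makes transparent where the exponent $\tfrac{2s+1}{4}$ and the range of $s$ come from; the paper's Laplace-transform lemma is less explicit about this, but it handles the endpoint and, importantly, is the same tool reused for all the IBVP estimates later in the paper.

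The genuine gap is your endpoint case $s=-\tfrac12$, which the theorem includes. The quantity $\int_0^\infty|U(x,t)|^2\,dt$ that you propose to bound is infinite for generic data: if $\widehat U_0$ is continuous and nonzero at $\xi=0$, then $U(x,t)\sim c\,\widehat U_0(0)\,t^{-1/2}$ as $t\to\infty$, which is not square-integrable on $(0,\infty)$. Equivalently, the bilinear form with kernel $(\xi^2+\xi'^2)^{-1}$ cannot be bounded by $\no{U_0}_{H_x^{-1/2}(\mathbb R)}^2$, because of the non-integrable singularity of the kernel at $\xi=\xi'=0$; the Schur integral with your weight, $\int(1+\xi'^2)^{1/4}\xi'^{-2}d\xi'$, already diverges, and no choice of weight can rescue an inequality that is false. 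The divergence lives entirely at low frequencies, so the repair is localized: apply the bilinear-form/Schur argument only to the high-frequency piece $|\xi|,|\xi'|>1$ (there the test with weight $(1+\xi^2)^{-1/4}$ does close, by the scaling $\xi'=|\xi|v$), since your cutoff construction already controls the low frequencies for every $s$ and every $b\in[0,1)$ --- or keep the time integral over the finite interval $(0,T)$, where the low-frequency contribution is bounded by $c_s\,T\no{U_0}_{H_x^s(\mathbb R)}^2$, which is exactly how the paper treats the $L_t^2(0,T)$ piece at the endpoint.
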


\begin{remark}
The time estimate \eqref{nh-ivp-te} essentially dictates the space for the boundary datum $g_0$ of the half-line problem \eqref{nh-rd-ibvp}.
It is interesting to note that, despite the diffusive nature of the heat equation, the Sobolev exponent $\frac{2s+1}{4}$ is precisely the one appearing in the corresponding estimate for the (dispersive) linear Schr\"odinger equation (see \cite{kpv1991} and \cite{fhm2017}).
\end{remark}

\begin{proof}[\textnormal{\textbf{Proof of Theorem \ref{nh-ivp-t}.}}]
The solution formula \eqref{nh-lh-ivp-sol} combined with the definition of the $H_x^s(\mathbb R)$-norm imply the space estimate \eqref{nh-ivp-se} for all $s\in\mathbb R$ and all $t\geqslant 0$:
\ddd{
\no{U(t)}_{H_x^s(\mathbb R)}^2
&=
\int_{\xi\in\mathbb R} \left(1+\xi^2\right)^s \big| e^{-\xi^2 t} \widehat U_0(\xi)\big|^2 d\xi
\leqslant
\int_{\xi\in\mathbb R} \left(1+\xi^2\right)^s \big|\widehat U_0(\xi)\big|^2 d\xi
=
\no{U_0}_{H_x^s(\mathbb R)}^2. 
\nn
}
For continuity in time, we note that for any sequence 
$\left\{ t_n \right\} \subset [0, T]$ converging to $t\in [0, T]$ we have
\eee{\nn
\no{U(t_n) - U(t)}_{H_x^s(\mathbb R)}^2
=
\int_{\xi\in\mathbb R} \left(1+\xi^2\right)^s \left| 
\big(e^{-\xi^2 t_n} - e^{-\xi^2 t}\big)   \what U_0(\xi) \right|^2 d\xi
\lesssim
\int_{\xi\in\mathbb R} \left(1+\xi^2\right)^s \big|\what U_0(\xi)\big|^2 d\xi
<\infty.
}
Thus, by the dominated convergence theorem we infer  that $\lim_{n\to\infty}\no{U(t_n) - U(t)}_{H_x^s(\mathbb R)}^2 = 0$.

The proof of the time estimate \eqref{nh-ivp-te} is more involved.
Letting $m=\frac{2s+1}{4}$ and noting that  for  $-\tfrac 12 \leqslant s < \tfrac 32$ we have $0\leqslant m <1$, we employ the physical space definition of the Sobolev $H_t^m(0, T)$-norm:
\eee{\label{nh-hm-frac-norm-def}
\no{U(x)}_{H_t^m(0, T)}
=
\big\|U(x)\big\|_{L_t^2(0, T)} 
+
\big\|U(x)\big\|_m, \quad 0\leqslant m<1,
}
where for $0<m<1$ the fractional norm $\no{\cdot}_m$ is defined by
\eee{\nn
\no{U(x)}_{m}^2
=
\int_{t_1=0}^T \int_{t_2=0}^T
\frac{\left|U(x, t_1)-U(x, t_2)\right|^2}{\left|t_1-t_2\right|^{1+2m}} dt_2dt_1
\simeq
\int_{t=0}^T \int_{z=0}^{T-t} \frac{\left|U(x, t+z) - U(x, t)\right|^2}{z^{1+2m}} \, dz dt.
}
\noindent
\textit{\b{The norm $\no{U(x)}_{L_t^2(0, T)}$.}}  
We have
\sss{
\ddd{
\no{U(x)}_{L_t^2(0, T)}
&\lesssim
\int_{t=0}^T
\left[
\int_{\xi=0}^1  e^{-\xi^2 t} 
\left(
\big|\widehat U_0(-\xi)\big|  
+
\big|\widehat U_0(\xi)\big| 
\right) d\xi
\right]^2
dt
\label{nh-te-split-1-T}
\\
&\quad
+
\int_{t=0}^T
\left[
\int_{\xi=1}^\infty  e^{-\xi^2 t} 
\left(
\big|\widehat U_0(-\xi)\big|  
+
\big|\widehat U_0(\xi)\big| 
\right) d\xi
\right]^2
dt.
\label{nh-te-split-2-T}
}
}
By the Cauchy-Schwarz inequality, we find
\eee{\label{nh-te-split-1-T-te}
\eqref{nh-te-split-1-T}
\lesssim
\int_{t=0}^T
\left(
\int_{\xi=0}^1  e^{-2\xi^2 t} 
\left(1+\xi^2\right)^{-s} d\xi
\right) \no{U_0}_{H_x^s(\mathbb R)}^2 dt
\leqslant
c_s T \no{U_0}_{H_x^s(\mathbb R)}^2, \quad s\in\mathbb R.
}

Next, making the change of variable $\xi = \sqrt \tau$ we have
\eee{
\eqref{nh-te-split-2-T}
=
\int_{t=0}^T \left(
\int_{\tau=1}^\infty  e^{-\tau t} \, \frac{\big|\widehat U_0(-\sqrt \tau)\big|+\big|\widehat U_0(\sqrt \tau)\big|}{2\sqrt \tau} \, d\tau
\right)^2 dt
\lesssim
\no{
\mathcal L\left\{\phi\right\}
}_{L_t^2(0, \infty)}^2,
\label{nh-te-tlap}
}
where 
$
\mathcal L\left\{\phi\right\}(t)
\doteq
\int_{\tau=0}^\infty e^{-\tau t} \phi(\tau) d\tau
$
is the  Laplace transform of the function  
\eee{\label{nh-phi-lap-def}
\phi(\tau) 
=
\bigg\{
\begin{array}{ll}
\tau^{-\frac 12} \big(\big|\widehat U_0(-\sqrt \tau)\big|+\big|\widehat U_0(\sqrt \tau)\big|\big), & \tau\geqslant 1,
\\
0, & 0\leqslant \tau <1.
\end{array}
}
\begin{lemma}[\b{$L^2$-boundedness of the Laplace transform}]
\label{nh-tlap-l}
Suppose $\phi\in L_\tau^2(0,\infty)$. Then, the map
$$
\mathcal L: \phi  \mapsto  \int_{\tau=0}^{\infty} e^{-\tau t} \phi(\tau) d\tau
$$
is bounded from $L_\tau^2(0,\infty)$ into $L_t^2(0, \infty)$ with
$$
\no{\mathcal L\left\{\phi\right\}}_{L_t^2(0,\infty)}
\leqslant
\sqrt{\pi} \no{\phi}_{L_\tau^2(0,\infty)}.
$$
\end{lemma}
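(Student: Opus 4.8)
The plan is to view $\mathcal L$ as an integral operator on $L^2(0,\infty)$ with the nonnegative, symmetric kernel $K(t,\tau)=e^{-t\tau}$ and to apply Schur's test with a well-chosen positive weight. The kernel obeys the scaling invariance $K(\lambda t,\lambda^{-1}\tau)=K(t,\tau)$, which singles out $p(\tau)=\tau^{-1/2}$ as the natural weight respecting this homogeneity. The whole argument then reduces to a single Gamma-function evaluation, from which the sharp constant $\sqrt{\pi}=\Gamma(\tfrac12)$ emerges automatically.

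First I would record the elementary identity underlying everything: via the substitution $u=t\tau$,
$$
\int_{\tau=0}^\infty e^{-t\tau}\,\tau^{-1/2}\,d\tau
=
t^{-1/2}\int_{u=0}^\infty e^{-u}u^{-1/2}\,du
=
\Gamma(\tfrac12)\,t^{-1/2}
=
\sqrt{\pi}\,t^{-1/2}.
$$
Because both the kernel $K$ and the weight $p$ are symmetric under $t\leftrightarrow\tau$, this one identity supplies both of the bounds demanded by Schur's test with the single weight $p(\tau)=\tau^{-1/2}$, namely
$$
\int_{\tau=0}^\infty K(t,\tau)\,p(\tau)\,d\tau=\sqrt{\pi}\,p(t),
\qquad
\int_{t=0}^\infty K(t,\tau)\,p(t)\,dt=\sqrt{\pi}\,p(\tau).
$$

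Next I would carry out the Schur estimate itself. For $\phi,\psi\in L^2(0,\infty)$, I factor $K=\sqrt{K}\cdot\sqrt{K}$ and insert the factor $(p(\tau)/p(t))^{1/2}$ together with its reciprocal, so that Cauchy–Schwarz in $(t,\tau)$ gives
$$
\int_0^\infty\!\!\int_0^\infty K(t,\tau)\abs{\phi(\tau)}\abs{\psi(t)}\,d\tau\,dt
\leqslant
\Big(\int\!\!\int K\,\tfrac{p(\tau)}{p(t)}\abs{\psi(t)}^2\Big)^{1/2}
\Big(\int\!\!\int K\,\tfrac{p(t)}{p(\tau)}\abs{\phi(\tau)}^2\Big)^{1/2}.
$$
Carrying out the inner integration in each factor by means of the two displayed identities collapses them to $\sqrt{\pi}\,\no{\psi}_{L_t^2(0,\infty)}^2$ and $\sqrt{\pi}\,\no{\phi}_{L_\tau^2(0,\infty)}^2$ respectively, so the double integral is bounded by $\sqrt{\pi}\,\no{\phi}_{L_\tau^2(0,\infty)}\no{\psi}_{L_t^2(0,\infty)}$. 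Interpreting the left-hand side as (a bound on) the pairing $\langle \mathcal L\{\phi\},\psi\rangle$ and taking the supremum over $\no{\psi}_{L_t^2(0,\infty)}\leqslant 1$ then yields exactly $\no{\mathcal L\{\phi\}}_{L_t^2(0,\infty)}\leqslant\sqrt{\pi}\,\no{\phi}_{L_\tau^2(0,\infty)}$.

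I do not anticipate a genuine obstacle, since once the weight $p(\tau)=\tau^{-1/2}$ is identified the estimate is forced and the constant is precisely $\Gamma(\tfrac12)=\sqrt{\pi}$. The only points that merit care are bookkeeping: justifying the interchange of the order of integration and the finiteness of the intermediate integrals, which I would handle by first treating $\phi\geqslant 0$ supported in a compact subset of $(0,\infty)$ and then passing to the general case by monotone and dominated convergence together with density of such functions in $L^2(0,\infty)$; and observing that although $p=\tau^{-1/2}\notin L^2(0,\infty)$, this is irrelevant because Schur's test only requires $p$ to be a positive measurable weight, not an $L^2$ function.
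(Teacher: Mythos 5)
Your proof is correct: the Schur-test argument with the homogeneity-adapted weight $p(\tau)=\tau^{-1/2}$, the identity $\int_0^\infty e^{-t\tau}\tau^{-1/2}\,d\tau=\Gamma(\tfrac12)\,t^{-1/2}$, and the duality step are all sound, and the Tonelli/density bookkeeping you flag is handled properly (indeed Tonelli alone suffices since the relevant integrands are nonnegative). The paper itself does not prove the lemma but defers to \cite{fhm2017}, where the argument is the same mechanism in a slightly different packaging --- a weighted Cauchy--Schwarz applied directly to $|\mathcal L\{\phi\}(t)|^2$ with the weight $\tau^{\pm 1/2}$, followed by Fubini and the $\Gamma(\tfrac12)=\sqrt{\pi}$ evaluation --- so your proposal is essentially the standard proof of this sharp bound.
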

A proof of Lemma \ref{nh-tlap-l} is available in \cite{fhm2017}. Upon employing this lemma for $\phi$ given by \eqref{nh-phi-lap-def}, \eqref{nh-te-tlap}  yields
\eee{
\eqref{nh-te-split-2-T}
\lesssim
\no{
\tau^{-\frac 12} \big(\big|\widehat U_0(-\sqrt \tau)\big|+\big|\widehat U_0(\sqrt \tau)\big|\big)
}_{L_\tau^2(1, \infty)}^2
\lesssim
\int_{\tau=1}^\infty
\tau^{-1} \big|\widehat U_0(-\sqrt \tau)\big|^2  d\tau
+
\int_{\tau=1}^\infty
\tau^{-1}  \big|\widehat U_0(\sqrt \tau)\big|^2 d\tau.
\nn
}
Thus, letting $\xi = -\sqrt \tau$  and $\xi = \sqrt \tau$ in the first and the second integral respectively, we obtain
\eee{
\eqref{nh-te-split-2-T}
\lesssim
\int_{|\xi|\geqslant 1}
|\xi|^{-1} \big|\widehat U_0(\xi)\big|^2   d\xi
\leqslant
\no{U_0}_{H_x^{-\frac 12}(\mathbb R)}^2
\leqslant
\no{U_0}_{H_x^{s}(\mathbb R)}^2, \quad s\geqslant -\tfrac 12.
\label{nh-te-split-2-T-te}
}
Combining estimates \eqref{nh-te-split-1-T-te} and \eqref{nh-te-split-2-T-te},  we find 
\eee{\label{L2-te-T}
\no{U(x)}_{L_t^2(0, T)}
\lesssim
\no{U_0}_{H_x^s(\mathbb R)}, \quad s\geqslant -\tfrac 12, \ x\in\mathbb R.
}
\noindent
\textit{\b{The fractional norm $\no{U(x)}_m$.}} 
Recall that now $-\frac 12<s<\frac 32$.
Starting from  \eqref{nh-lh-ivp-sol}, we compute
\ddd{
&\quad
\left|U(x, t+z) - U(x, t)\right|
\lesssim
\int_{\xi\in\mathbb R}  e^{-\xi^2 t} \big(1-e^{-\xi^2 z}\big)  \big|\widehat U_0(\xi)\big| d\xi
\nn\\
&=
\int_{\xi=0}^1 e^{-\xi^2 t} \big(1-e^{-\xi^2 z}\big) \left(\big|\widehat U_0(-\xi)\big| + \big|\widehat U_0(\xi)\big| \right) d\xi
+
\int_{\xi=1}^\infty e^{-\xi^2 t} \big(1-e^{-\xi^2 z}\big) \left(\big|\widehat U_0(-\xi)\big| + \big|\widehat U_0(\xi)\big| \right) d\xi.
\nn
}
Hence, from the definition of the fractional Sobolev norm we have
\sss{
\ddd{
\no{U(x)}_{m}^2
&\lesssim
\int_{t=0}^T\int_{z=0}^{T-t}  \frac{1}{z^{1+2m}}  \left[\int_{\xi=0}^1 e^{-\xi^2 t} \big(1-e^{-\xi^2 z}\big) \left(\big|\widehat U_0(-\xi)\big| + \big|\widehat U_0(\xi)\big| \right) d\xi\right]^2 dz dt
\label{nh-te-frac-split-1}
\\
&\quad
+
\int_{t=0}^T\int_{z=0}^{T-t}  \frac{1}{z^{1+2m}}   \left[\int_{\xi=1}^\infty e^{-\xi^2 t} \big(1-e^{-\xi^2 z}\big) \left(\big|\widehat U_0(-\xi)\big| + \big|\widehat U_0(\xi)\big| \right) d\xi\right]^2 dz dt.
\label{nh-te-frac-split-2}
}
}
For any $s\in\mathbb R$, the Cauchy-Schwarz inequality implies
\eee{
\eqref{nh-te-frac-split-1}
\lesssim
T \no{U_0}_{H_x^s(\mathbb R)}^2
\int_{\xi=0}^1  \left(1+\xi^2\right)^{-s}
\int_{z=0}^{T}   \frac{\big(1-e^{-\xi^2 z}\big)^2}{z^{1+2m}} \,  dz  d\xi. 
\nn
}
Moreover, 
\eee{\label{nh-gamma-est}
\int_{z=0}^T \frac{\big(1-e^{-\xi^2 z}\big)^2}{z^{1+2m}}\, dz
\leqslant
\xi^{4m}
 \int_{\zeta=0}^\infty \frac{\left(1-e^{-\zeta}\right)^2}{\zeta^{1+2m}}\,  d\zeta
 \simeq
 \xi^{4m},
}
since the $\zeta$-integral above converges for $0<m<1$. Hence,  
\ddd{\label{nh-te-frac-split-1-te}
\eqref{nh-te-frac-split-1}
\lesssim
T \no{U_0}_{H_x^s(\mathbb R)}^2
\int_{\xi=0}^1  \left(1+\xi^2\right)^{-s} \xi^{4m}  d\xi
=
c_s T \no{U_0}_{H_x^s(\mathbb R)}^2, \quad -\tfrac 12 < s < \tfrac 32.
}

Furthermore, letting $\xi = \sqrt \tau$  we have
\eee{
\eqref{nh-te-frac-split-2}
\lesssim
\int_{z=0}^T \frac{1}{z^{1+2m}}   
\no{
\mathcal L\left\{\phi\right\}(z)}_{L_t^2(0, \infty)}^2 dz,
\nn
}
where
$$
\phi(\tau) 
=
\bigg\{
\begin{array}{ll}
\tau^{-\frac 12} \left(1-e^{-\tau z}\right) \big(\big|\widehat U_0(-\sqrt \tau)\big| + \big|\widehat U_0(\sqrt \tau)\big| \big), & \tau\geqslant 1,
\\
0, & 0\leqslant \tau <1.
\end{array}
$$
Thus, by the Laplace transform bound of Lemma \ref{nh-tlap-l}  we find
\eee{
\eqref{nh-te-frac-split-2}
\lesssim
\int_{\xi=1}^\infty 
\xi^{-1}  \left(\big|\widehat U_0(-\xi)\big| + \big|\widehat U_0(\xi)\big| \right)^2 
\int_{z=0}^T  \frac{\big(1-e^{-\xi^2 z}\big)^2}{z^{1+2m}}  \, dz  d\xi.
\nn
 }
Hence, using estimate \eqref{nh-gamma-est} for the $z$-integral, we obtain
\eee{
\eqref{nh-te-frac-split-2}
\lesssim
\int_{\xi=1}^\infty 
\xi^{4m-1}  \left(\big|\widehat U_0(-\xi)\big| + \big|\widehat U_0(\xi)\big| \right)^2  d\xi
\lesssim
\no{U_0}_{H_x^s(\mathbb R)}^2, \quad -\tfrac 12 < s < \tfrac 32.
\label{nh-te-frac-split-2-te}
}
Note that since $\xi\geqslant 1$ we have $\xi^{2s}\lesssim \left(1+\xi^2\right)^s$ even for negative values of $s$. 

Combining estimates \eqref{nh-te-frac-split-1-te} and \eqref{nh-te-frac-split-2-te}, we deduce 
\eee{\label{frac-te-T}
\no{U(x)}_m \lesssim \no{U_0}_{H_x^s(\mathbb R)}, \quad  -\tfrac 12 < s < \tfrac 32, \ x\in \mathbb R.
}
In turn, estimates \eqref{L2-te-T} and \eqref{frac-te-T} combined with the definition \eqref{nh-hm-frac-norm-def}   yield  estimate \eqref{nh-ivp-te}.

Continuity with respect to $x$ follows from estimating the $H_t^{\frac{2s+1}{4}}(0, T)$-norm of the difference  $U(x_n)-U(x)$  as above  for any sequence $\{x_n\}\to x$ and then using the dominated convergence theorem to show that this norm vanishes in the limit $n\to\infty$.
\end{proof}
%
%
%
%
%
%
%
%
\subsection{Sobolev-type estimates for the forced linear IVP}\label{nh-fivp-s}
Next, we shall obtain estimates in Sobolev spaces for the second component of the superposition \eqref{nh-flh-ibvp-sup}, namely the forced linear IVP \eqref{nh-flh-ivp}. 
As we will see below, the lack of algebra property in $H_x^s(\mathbb R)$ when $s<\frac 12$ forces us to estimate the solution of problem \eqref{nh-flh-ivp} in a different way than for $s>\frac 12$, giving rise to the space 
\eee{\label{nh-cbeta-def-line}
C^\alpha([0, T]; L_x^p(\mathbb R))
=
\Big\{
u \in C([0, T]; L_x^p(\mathbb R)): 
\sup_{t\in [0,T]} t^\alpha \no{u(t)}_{L_x^p(\mathbb R)} < \infty
\Big\}
}
and thereby introducing the space $C^\alpha([0, T]; L_x^p(0, \infty))$ in the ``rough'' data solution space $Y$ of Theorem \ref{nh-ibvp-wp-l-t}.
We note that spaces of the type \eqref{nh-cbeta-def-line} also appear in the analysis of the reaction-diffusion IVP  \cite{w1980}.

\begin{theorem}[\b{Sobolev-type estimates for the forced linear IVP}]
\label{nh-fivp-t}
For $F=\prod_{j=1}^p F_j$, the solution $W=S\big[0; F\big]$ of the forced linear heat IVP \eqref{nh-flh-ivp} given by formula \eqref{nh-flh-ivp-sol-comb} admits the space estimates
\sss{\label{nh-fivp-se}
\ddd{
\no{W}_{C([0, T]; H_x^s(\mathbb R))}
&\lesssim
T \prod_{j=1}^p  \no{F_j}_{C([0, T]; H_x^s(\mathbb R))},
&& s>\tfrac 12,
\label{nh-fivp-se-h}
\\
\no{W}_{C([0, T]; H_x^s(\mathbb R))}
&\lesssim
\sqrt T \prod_{j=1}^p
\no{F_j}_{C^\alpha([0, T]; L_x^p(\mathbb R))},
\quad &&
0\leqslant s < \tfrac 12, 
\label{nh-fivp-se-l}
}
}
and the time estimates
\sss{\label{nh-fivp-te}
\ddd{
\no{W}_{C(\mathbb R_x; H_t^{\frac{2s+1}{4}}(0, T))}
&\lesssim
\sqrt T  
\prod_{j=1}^p
\no{F_j}_{C([0, T]; H_x^s(\mathbb R))},
&&\tfrac 12 <s < \tfrac 32,
\label{nh-fivp-te-h}
\\
\no{W}_{C(\mathbb R_x; H_t^{\frac{2s+1}{4}}(0, T))}
&\lesssim
\sqrt T\,
\prod_{j=1}^p
\no{F_j}_{C^{\alpha}([0, T]; L_x^p(\mathbb R))},
\quad 
&-&\tfrac 12 \leqslant s < \tfrac 12.
\label{nh-fivp-te-l}
}
}
\end{theorem}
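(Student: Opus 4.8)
The plan is to base all four estimates on the Duhamel representation \eqref{nh-flh-ivp-sol-d}, which writes $W$ as the superposition $W(x,t)=\int_{t'=0}^t S\big[F(\cdot,t');0\big](x,t-t')\,dt'$ of homogeneous heat evolutions, thereby reducing everything to the homogeneous bounds of Theorem \ref{nh-ivp-t} together with the algebra and H\"older bounds \eqref{nh-Ff-se} and \eqref{nh-Ff-ca-est}. For the space estimate \eqref{nh-fivp-se-h} ($s>\tfrac12$) I would apply Minkowski's integral inequality in $H_x^s(\mathbb R)$ and then the contractivity bound \eqref{nh-ivp-se}, giving $\no{W(t)}_{H_x^s}\le\int_0^t\no{F(\cdot,t')}_{H_x^s}\,dt'\le T\sup_{t'}\no{F(t')}_{H_x^s}$, after which the algebra property \eqref{nh-Ff-se} produces $\prod_j\no{F_j}_{C([0,T];H_x^s)}$.

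For the rough space estimate \eqref{nh-fivp-se-l} ($0\le s<\tfrac12$) the algebra property fails, so I would instead prove and use the heat-smoothing bound $\no{e^{\tau\p_x^2}f}_{H_x^s(\mathbb R)}\lesssim\tau^{-\frac{2s+1}{4}}\no{f}_{L_x^1(\mathbb R)}$, obtained in Fourier from $|\widehat f(\xi)|\le\no{f}_{L^1}$ together with the Gaussian bound $\int(1+\xi^2)^s e^{-2\tau\xi^2}d\xi\lesssim\tau^{-(s+\frac12)}$. Inserting this into Duhamel and using \eqref{nh-Ff-ca-est} in the form $\no{F(\cdot,t')}_{L_x^1}\lesssim (t')^{-p\alpha}\prod_j\no{F_j}_{C^\alpha}$ with $p\alpha=\tfrac12-b$ reduces matters to the Beta-type integral $\int_0^t(t-t')^{-\frac{2s+1}{4}}(t')^{-p\alpha}\,dt'\simeq t^{\,1-\frac{2s+1}{4}-p\alpha}$; since the range $\tfrac{2s+1}{4}<b<\tfrac12$ from \eqref{nh-cbeta-def} forces the exponent $1-\frac{2s+1}{4}-p\alpha=\frac{1-2s}{4}+b$ to exceed $\tfrac12$ and $T<1$, the power of $t$ is dominated by $\sqrt t\le\sqrt T$, yielding \eqref{nh-fivp-se-l}.

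For the time estimates I would work in Fourier with $\widehat W(\xi,t)=\int_0^t e^{-\xi^2(t-t')}\widehat F(\xi,t')\,dt'$ and $|W(x,t)|\le\frac{1}{2\pi}\int_\xi|\widehat W(\xi,t)|\,d\xi$, splitting the $H_t^{\frac{2s+1}{4}}(0,T)$-norm into its $L_t^2$ part and its fractional seminorm exactly as in the proof of Theorem \ref{nh-ivp-t}. The $L_t^2$ part is controlled by Young's inequality in time, $\no{\widehat W(\xi,\cdot)}_{L_t^2}\le\xi^{-2}\no{\widehat F(\xi,\cdot)}_{L_t^2}$, followed by Cauchy--Schwarz in $\xi$, where the gained $\xi^{-2}$ decay secures convergence and the factor $\no{\widehat F(\xi,\cdot)}_{L_t^2}\le\sqrt T\,\sup_t|\widehat F(\xi,t)|$ supplies the required $\sqrt T$. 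For the seminorm I would decompose the increment as $\widehat W(\xi,t+z)-\widehat W(\xi,t)=(e^{-\xi^2 z}-1)\widehat W(\xi,t)+\int_t^{t+z}e^{-\xi^2(t+z-t')}\widehat F(\xi,t')\,dt'$. The local second term is $O(z)$, so that $\int\!\!\int z^{-1-2m}|\cdot|^2\,dz\,dt$ converges precisely because $m=\tfrac{2s+1}{4}<1$ on the stated ranges; the first term has the same structure as the homogeneous increment, so after integrating in $z$ (producing the factor $\xi^{4m}$ as in \eqref{nh-gamma-est}) I would split into $|\xi|\le 1$ and $|\xi|>1$ and invoke the $L^2$-boundedness of the Laplace transform, Lemma \ref{nh-tlap-l}, to reproduce the high-frequency bound \eqref{nh-te-frac-split-2-te} with $\widehat W$ in place of $\widehat U_0$, the extra $\xi^{-2}$ from Young's inequality closing the $\xi$-integral. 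In the smooth range this is matched against $\no{F(t')}_{H_x^s}$ via \eqref{nh-Ff-se}, and in the rough range against $\no{F(\cdot,t')}_{L_x^1}$ via the heat-smoothing bound and the weight $t^\alpha$ of \eqref{nh-cbeta-def}. Continuity in $x$ follows in each case from dominated convergence, as in Theorem \ref{nh-ivp-t}.

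The main obstacle I anticipate is the fractional time seminorm of the Duhamel integral: in contrast to the homogeneous case, the individual pieces $\mathbf{1}_{\{t>t'\}}\,S\big[F(\cdot,t');0\big](x,t-t')$ jump at $t=t'$ and hence do not lie in $H_t^m$ for $m>\tfrac12$, so a naive application of Minkowski in $H_t^m$ is unavailable and the cancellation of these jumps must be encoded through the increment decomposition above. Pushing the Laplace-transform argument through with $\widehat W$ in place of $\widehat U_0$, while simultaneously extracting the $\sqrt T$ lifespan dependence and, in the rough regime, trading $H_x^s$-control of the forcing for $L_x^1$-control plus the temporal weight, is the technical heart of the proof.
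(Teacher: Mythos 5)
Your space estimates and the skeleton of your time estimates match the paper's proof: the smooth space bound via Minkowski over Duhamel plus \eqref{nh-ivp-se} and the algebra property, the rough space bound via the Gaussian-smoothing kernel estimate plus the Beta integral \eqref{beta-function-l-nh} and H\"older with the weight $\alpha=\frac1p(\frac12-b)$, and the increment decomposition $\widehat W(\xi,t+z)-\widehat W(\xi,t)=(e^{-\xi^2z}-1)\widehat W(\xi,t)+\int_t^{t+z}e^{-\xi^2(t+z-t')}\widehat F(\xi,t')\,dt'$ are all exactly what the paper does. However, two steps in your time estimates would fail as written. In the $L_t^2$ part you pull the supremum in $t$ out pointwise in $\xi$, via $\no{\widehat F(\xi,\cdot)}_{L_t^2}\leqslant\sqrt T\,\sup_t|\widehat F(\xi,t)|$, and only then apply Cauchy--Schwarz in $\xi$; this leaves you with $\big(\int_\xi(1+\xi^2)^s\sup_t|\widehat F(\xi,t)|^2\,d\xi\big)^{1/2}$, which is \emph{not} controlled by $\sup_t\no{F(t)}_{H_x^s(\mathbb R)}$: the inequality between $\int\sup$ and $\sup\int$ goes the wrong way (consider $\widehat F(\cdot,t)$ a unit bump drifting to frequency $\sim 1/t$ as $t\to0$). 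The repair is to apply Cauchy--Schwarz first, keep $\no{\widehat F(\xi,\cdot)}_{L_t^2}$, and use Fubini, since $\int_\xi(1+\xi^2)^s\no{\widehat F(\xi,\cdot)}_{L_t^2}^2\,d\xi=\int_0^T\no{F(t)}_{H_x^s}^2\,dt\leqslant T\sup_t\no{F(t)}_{H_x^s}^2$; relatedly, the Young factor is $(1-e^{-\xi^2T})/\xi^2\leqslant\min(T,\xi^{-2})$, not $\xi^{-2}$, which by itself is not integrable at $\xi=0$. (The paper sidesteps both issues by applying Minkowski directly to the Duhamel representation \eqref{nh-flh-ivp-sol-d} and invoking the homogeneous bound \eqref{nh-ivp-te}.)

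The second, more substantive gap is your claim that the local term is ``$O(z)$, so the $z$-integral converges because $m<1$.'' That reasoning is valid only in the smooth range, where $\int_\xi|\widehat F(\xi,t')|\,d\xi\lesssim\no{F(t')}_{H_x^s}$ by Cauchy--Schwarz for $s>\frac12$ (this is the paper's \eqref{fls-ivp-te-b2}). In the rough range $-\frac12\leqslant s<\frac12$ you only control $\no{F(t')}_{L_x^1}$, so the best pointwise bound keeps the Gaussian: $\big|\int_\xi e^{i\xi x-\xi^2(t+z-t')}\widehat F(\xi,t')\,d\xi\big|\lesssim(t+z-t')^{-1/2}\no{F(t')}_{L_x^1}$, and the local term is then of size $z^{1/2}$ --- or $z^{b}$ after inserting the weight $(t'-t)^{\frac12-b}$ as in the paper's \eqref{fh-ivp-te-p2} --- not $z$. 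Convergence of $\int z^{-1-2m}|\cdot|^2\,dz$ then hinges on $m<\frac12$, equivalently on $b>m=\frac{2s+1}{4}$, which is exactly the condition $\frac{2s+1}{4}<b<\frac12$ of \eqref{nh-cbeta-def}; this is where estimate \eqref{fh-ivp-te-p2-est} does its work and where the restriction $s<\frac12$ is actually consumed, so it cannot be glossed over. Finally, Lemma \ref{nh-tlap-l} cannot be invoked ``with $\widehat W$ in place of $\widehat U_0$'': the lemma requires the $t$-dependence to be purely $e^{-\tau t}$, which $\widehat W(\xi,t)$ does not have. Either apply it slice-by-slice under the Duhamel integral, where each piece $e^{-\xi^2(t-t')}\widehat F(\xi,t')$ has the required structure in $t-t'$ (the paper's route through Theorem \ref{nh-ivp-t}), or drop it altogether --- once you use the Young bound, Cauchy--Schwarz in $\xi$ closes the high-frequency integral without it.
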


\begin{proof}[\textnormal{\textbf{Proof of Theorem \ref{nh-fivp-t}.}}]
\textit{\b{Space estimate \eqref{nh-fivp-se-h}.}}
Using Minkowski's integral inequality and estimate \eqref{nh-ivp-se}, we have
\eee{\label{nh-mink-temp}
\no{W(t)}_{H_x^s(\mathbb R)}
\leqslant
\int_{t'=0}^t \no{S\big[F(t'); 0\big](t-t')}_{H_x^s(\mathbb R)} dt'
\leqslant
T \no{F}_{C([0, T]; H_x^s(\mathbb R))}, \quad s\in\mathbb R,
}
which implies the space estimate \eqref{nh-fivp-se-h} via the algebra property in $H_x^s(\mathbb R)$ for $s>\tfrac 12$.

Regarding continuity in time, for any sequence 
$\left\{ t_n \right\} \subset [0, T]$ converging to $t\in [0, T]$ we have
{\small
\ddd{
\no{W(t_n) - W(t)}_{H_x^s(\mathbb R)}
\leqslant
\!
\int_{t'=0}^T\!
\left(
\int_{\xi\in\mathbb R}\!\! \left(1+\xi^2\right)^s \left| \chi_{(0, t_n)}(t') \, e^{-\xi^2 (t_n-t')} - \chi_{(0, t)}(t') \, e^{-\xi^2 (t-t')}\right|^2 \left| \what F(\xi, t') \right|^2 d\xi
\right)^{\!\frac 12}
\!\! dt'.
\nn
}
}
The $\xi$-integral in the above inequality can be bounded above by $\no{F(t')}_{H_x^s(\mathbb R)}$ so, in turn, we find 
$\no{W(t_n) - W(t)}_{H_x^s(\mathbb R)}
\lesssim
T \sup_{t\in [0, T]} \no{F(t)}_{H_x^s(\mathbb R)}<\infty.
$
Hence, using the dominated convergence theorem for the $t'$- and $\xi$-integrals, we conclude that 
$
\lim_{n\to\infty}\no{W(t_n) - W(t)}_{H_x^s(\mathbb R)} =0.
$
\vskip 3mm
\noindent
\textit{\b{Space estimate \eqref{nh-fivp-se-l}.}}
The proof of this estimate is more complicated because it concerns smaller values of $s$ for which the algebra property is not available. Instead, starting from the first inequality in \eqref{nh-mink-temp}, applying Minkowski's integral inequality and taking $\sup$ in $\xi$, we have
\eee{\label{temp8-nh}
\no{W(t)}_{H_x^s(\mathbb R)}
\lesssim
\int_{t'=0}^t
\left(\int_{\xi=0}^\infty \left(1+\xi^2\right)^{s}  e^{-2(t-t')\xi^2} d\xi\right)^{\frac{1}{2}}
\no{F(t')}_{L_x^1(\mathbb R)}  dt'.
}
Letting $z=2(t-t')\xi^2$ and noting that 
$(a+b)^s \leqslant 2^s \left(a^s + b^s \right)$ for 
$a, b, s \geqslant 0$, we find
\ddd{
\left(\int_{\xi\in\mathbb R} \left(1+\xi^2\right)^{s}  e^{-2(t-t')\xi^2} d\xi\right)^{\frac{1}{2}}
&\lesssim
\bigg[
\left(t-t'\right)^{-\frac 12}\int_{z=0}^\infty z^{-\frac 12} e^{-z} dz
+
\left(t-t'\right)^{-s-\frac 12}\int_{z=0}^\infty  e^{-z}  z^{s-\frac 12}  dz \bigg]^{\frac 12}
\nn\\
&\lesssim
\left(t-t'\right)^{-\frac{1}{4}} + \left(t-t'\right)^{-\frac{1}{4}-\frac{s}{2}}
\lesssim
\left(t-t'\right)^{-\frac{1}{4}-\frac{s}{2}}, \quad s\geqslant 0,
\nn
}
where we have also used the fact that $0\leqslant t-t' \leqslant t\leqslant T < 1$ and 
$0\leqslant \frac{1}{4} \leqslant \frac{1}{4}+\frac{s}{2}$ for  
$s\geqslant 0$.
Returning to \eqref{temp8-nh}, we have
\eee{
\!\!
\no{W(t)}_{H_x^s(\mathbb R)}
\lesssim
\!
\int_{t'=0}^t\!\!
\left(t-t'\right)^{-\frac{1}{4}-\frac{s}{2}} 
\!
\no{F(t')}_{L_x^1(\mathbb R)} dt'
\leqslant
\!
\sup_{t\in[0, T]}
\!
\left[
t^{p\gamma}  \no{F(t)}_{L_x^1(\mathbb R)}
\right]
\!
\int_{t'=0}^t \!\! \left(t-t'\right)^{-\frac{1}{4}-\frac{s}{2}} 
\! (t')^{-p\gamma} dt'
\nn
}
for some $\gamma$ to be specified.
Hence, using the estimate
\ddd{\label{beta-function-l-nh}
\int_{t'=0}^t \left(t-t'\right)^{-\beta_1} (t')^{-\beta_2} dt'
&=
t^{1-\beta_1-\beta_2}
\int_{\eta=0}^1 \left(1-\eta\right)^{-\beta_1} \eta^{-\beta_2} d\eta
\simeq 
t^{1-\beta_1-\beta_2}, \quad \beta_1, \beta_2 <1,
}
for $\beta_1= \frac{1}{4} + \frac{s}{2}$, $\beta_2= p\gamma$ and then applying  H\"older's inequality after setting $F =  \prod_{j=1}^p F_j$,  we get
\eee{\nn
\no{W(t)}_{H_x^s(\mathbb R)}
\lesssim
t^{\frac 12\left(\frac 32-s\right)-p\gamma}
\prod_{j=1}^p  \no{F_j}_{C^\gamma([0, T]; L_x^{p}(\mathbb R))},
\quad
s<\tfrac{3}{2}, \ \gamma<\tfrac 1p.
}
Finally, choosing  $\gamma = \alpha = \frac{1}{p}\left(\frac 12- b\right)$ with $\frac{2s+1}{4} < b < \frac 12$   we have
$
\tfrac 12\left(\tfrac 32-s\right)-p\gamma
=
b-\tfrac{2s-1}{4}
>
\tfrac 12.
$
Hence, recalling that $0\leqslant t <1$, we deduce the space estimate \eqref{nh-fivp-se-l}. Observe that the condition $b < \frac 12$ (which ensures that $\alpha >0$) combined with the condition $\frac{2s+1}{4}<b$ further restricts $s<\frac 12$.
\vskip 3mm
\noindent
\textit{\b{Time estimate \eqref{nh-fivp-te-h}.}}
For $m=\frac{2s+1}{4}$ with $m\in [0, 1) \Leftrightarrow s\in\left[-\frac 12, \frac 32\right)$, we employ the physical space definition \eqref{nh-hm-frac-norm-def} of the  $H_t^m(0, T)$-norm.
Thus, we need to estimate $\no{W(x)}_{L_t^2(0, T)}$ and $\no{W(x)}_{m}$.

Starting from the Duhamel representation \eqref{nh-flh-ivp-sol-d} and combining Minkowski's integral inequality with the time estimate \eqref{nh-ivp-te}, we find
\eee{\label{flh-ivp-te-l2}
\no{W(x)}_{L_t^2(0, T)}
\leqslant
\int_{t'=0}^T \no{S\big[F(\cdot, t'); 0 \big](x, t-t')}_{L_t^2(0, T)} dt'
\leqslant
T \sup_{t\in \left[0, T\right]} \no{F(t)}_{H_x^s(\mathbb R)}.
}

Moreover, writing 
\ddd{
W(x, t+z) - W(x, t)
&=
\int_{t'=0}^t 
\Big[
S\big[F(\cdot, t'); 0 \big] (x, t+z-t') 
-
S\big[F(\cdot, t'); 0 \big] (x, t-t')
\Big] dt'
\nn\\
&\quad
+
\int_{t'=t}^{t+z} S\big[F(\cdot, t'); 0 \big] (x, t+z-t') dt'
\nn
}
we have
\sss{\label{fls-ivp-te-b}
\ddd{
\hskip -.5cm
\no{W(x)}_m^2
&\lesssim
\int_{t=0}^T \int_{z=0}^{T-t} \frac{1}{z^{1+2m}} 
\left| \displaystyle \int_{t'=0}^t 
\Big[
S\big[F(\cdot, t'); 0 \big] (x, t+z-t') 
-
S\big[F(\cdot, t'); 0 \big] (x, t-t')
\Big] dt' \right|^2 dz dt
\label{fls-ivp-te-b1}
\\
&\quad
+
\int_{t=0}^T \int_{z=0}^{T-t}  \frac{1}{z^{1+2m}} 
\left| \int_{t'=t}^{t+z} S\big[F(\cdot, t'); 0 \big] (x, t+z-t') dt'\right|^2  dz dt.
\label{fls-ivp-te-b2}
}
}
By Minkowski's integral inequality and estimate \eqref{nh-ivp-te}, we find
\eee{
\eqref{fls-ivp-te-b1}
\leqslant
\left(\int_{t'=0}^T 
\Big\|
S\big[F(\cdot, t'); 0 \big] (x, t-t')\Big\|_m dt'
\right)^2
\lesssim
\left(T \sup_{t\in \left[0, T\right]} \no{F(t)}_{H_x^s(\mathbb R)}
\right)^2.
\label{fls-ivp-te-b1-est}
}
Furthermore, recalling the representation \eqref{nh-flh-ivp-sol}  we have
\eee{
\eqref{fls-ivp-te-b2}
\leqslant
\int_{t=0}^T \int_{z=0}^{T-t}
\frac{1}{z^{1+2m}}
\no{
\frac{1}{2\pi} \int_{\xi\in \mathbb R} e^{i\xi x}
\int_{t'=t}^{t+z} e^{-\xi^2(t+z-t')}  \widehat F(\xi, t')  dt' d\xi}_{L_x^\infty(\mathbb R)}^2 dz dt.
\nn
}
Hence, the Sobolev embedding theorem for $s>\frac 12$ implies
\ddd{
\eqref{fls-ivp-te-b2}
&\leqslant
\int_{t=0}^T \int_{z=0}^{T-t} \frac{1}{z^{1+2m}}
\no{
\frac{1}{2\pi} \int_{\xi\in \mathbb R} e^{i\xi x} 
\int_{t'=t}^{t+z} e^{-\xi^2(t+z-t')}  \widehat F(\xi, t')  dt' d\xi}_{H_x^s(\mathbb R)}^2  dz dt
\nn\\
&\leqslant
\int_{t=0}^T \int_{z=0}^{T-t} \frac{1}{z^{1+2m}}
\int_{\xi\in \mathbb R} \left(1+\xi^2\right)^s
\left(
\int_{t'=t}^{t+z} 
\left|\widehat F(\xi, t')\right|  dt'
\right)^2 
d\xi dz dt,
\nn
}
and Minkowski's integral inequality further yields
\ddd{
\eqref{fls-ivp-te-b2}
\leqslant
\int_{t=0}^T \int_{z=0}^{T-t}
\sup_{t'\in \left[t, t+z\right]} \no{F(t')}_{H_x^s(\mathbb R)}^2 \cdot z^{1-2m}  dz dt
\lesssim
T^{3-2m} \sup_{t\in \left[0, T\right]} \no{F(t)}_{H_x^s(\mathbb R)}^2,
\label{fls-ivp-te-b2-est}
}
after recalling that $m<1$.
Combining  \eqref{fls-ivp-te-b}, \eqref{fls-ivp-te-b1-est}, \eqref{fls-ivp-te-b2-est} and the fact that $T<1$, we deduce
\eee{
\no{W(x)}_m
\lesssim
\sqrt T  \sup_{t\in \left[0, T\right]} \no{F(t)}_{H_x^s(\mathbb R)}.
\label{fls-ivp-te-b-est}
}
Overall, estimates \eqref{flh-ivp-te-l2}  and \eqref{fls-ivp-te-b-est}  together with the definition \eqref{nh-hm-frac-norm-def} imply 
$$
\no{W(x)}_{H_t^m(0, T)}
\lesssim
\sqrt T \sup_{t\in \left[0, T\right]} \no{F(t)}_{H_x^s(\mathbb R)},\quad \tfrac 12<s < \tfrac 32,
$$
from which we deduce the time estimate \eqref{nh-fivp-te-h} via the algebra property in $H_x^s(\mathbb R)$ since $s>\frac 12$.

\vskip 3mm
\noindent
\textit{\b{Time estimate \eqref{nh-fivp-te-l}.}}
As for time estimate  \eqref{nh-fivp-te-h}, we restrict $-\frac 12 \leqslant s < \frac 32$ and use the norm \eqref{nh-hm-frac-norm-def}, which involves $\no{W(x)}_{L_t^2(0, T)}$ and $\no{W(x)}_{m}$. 

Starting from formula  \eqref{nh-flh-ivp-sol} and taking $\sup$ in $\xi$, we have
\eee{
\no{W(x)}_{L_t^2(0, T)}^2
\lesssim
\int_{t=0}^T 
\left[
\int_{t'=0}^t
(t-t')^{-\frac{1}{2}}
\no{F(t')}_{L_x^1(\mathbb R)}
dt'
\right]^2 dt.
\nn
}
Thus, using  \eqref{beta-function-l-nh} with 
$\beta_1 = \beta_2 = \frac 12$ and  H\"older's inequality in $x$, we obtain
\eee{\label{fivp-te-l2-nh}
\no{W(x)}_{L_t^2(0, T)}
\lesssim
\sqrt T   \sup_{t\in[0, T]}  t^{\frac 12}
\no{F(t)}_{L_x^1(\mathbb R)}
\lesssim
\sqrt T 
\prod_{j=1}^p
\sup_{t\in[0, T]}  
\left(t^{\frac{1}{2p}}
\no{F_j(t)}_{L_x^p(\mathbb R)}\right).
}

For the fractional norm $\no{W(x)}_m$, we use formula \eqref{nh-flh-ivp-sol} to write
\sss{
\ddd{
\no{W(x)}_m^2
&\lesssim
\int_{t=0}^T \int_{z=0}^{T-t} \frac{1}{z^{1+2m}} 
\left| 
\int_{t'=0}^t \int_{\xi\in\mathbb R} e^{i\xi x-\xi^2(t-t')} \big(e^{-\xi^2 z}-1\big)  \widehat{F}(\xi, t') d\xi dt' 
\right|^2 dz dt
\label{fh-ivp-te-p1}
\\
&\quad
+\int_{t=0}^T \int_{z=0}^{T-t} \frac{1}{z^{1+2m}} 
\left|
\int_{t'=t}^{t+z}  \int_{\xi\in\mathbb R} e^{i\xi x-\xi^2(t+z-t')}  \widehat{F}(\xi, t') d\xi dt' \right|^2 dz dt.
\label{fh-ivp-te-p2}
}
}
Minkowski's integral inequality together with the bound \eqref{nh-gamma-est} imply
\eee{
\eqref{fh-ivp-te-p1}
\lesssim
\int_{t=0}^T 
\left[
 \int_{t'=0}^t 
\left(\int_{\xi=0}^\infty 
e^{-\xi^2(t-t')} \xi^{2m} d\xi \right)
\no{F(t')}_{L_x^1(\mathbb R)}
dt'\right]^2 dt.
\nn
}
Estimating the $\xi$-integral as in the proof of estimate \eqref{nh-fivp-se-l} and recalling that $m=\frac{2s+1}{4}$, we find
\eee{
\eqref{fh-ivp-te-p1}
\lesssim
\int_{t=0}^T 
\left[
\int_{t'=0}^t 
\left(t-t'\right)^{- \frac s2 -\frac 34}
\no{F(t')}_{L_x^1(\mathbb R)}
dt'\right]^2 dt.
\nn
}
Note  that the singularity at $t'=t$ is integrable provided that $s<\tfrac 12$. 
Then, using the bound \eqref{beta-function-l-nh} with $\beta_1 = \frac s2 + \frac 34$ and $\beta_2 = 1-\beta_1$, we obtain
\eee{
\eqref{fh-ivp-te-p1}
\lesssim
\int_{t=0}^T 
 \left(\sup_{t'\in[0, t]} (t')^{1-\left(\frac s2 + \frac 34\right)} 
\no{F(t')}_{L_x^1(\mathbb R)}\right)^2   dt
\leqslant
T   \left(\sup_{t\in[0, T]} t^{\frac 12 \left(\frac 12  - s\right)} 
\no{F(t)}_{L_x^1(\mathbb R)}\right)^2.
\label{fh-ivp-te-p1-est}
}
For the term \eqref{fh-ivp-te-p2}, taking $\sup$ in $\xi$ and noting that $\int_{\xi\in\mathbb R} e^{-\xi^2(t+z-t')}  d\xi 
\simeq
\left(t+z-t'\right)^{-\frac{1}{2}}$ we have
\ddd{
&\eqref{fh-ivp-te-p2}
\lesssim
\int_{t=0}^T \int_{z=0}^{T-t} \frac{1}{z^{1+2m}} 
\left(
\int_{t'=t}^{t+z} 
\left(t+z-t'\right)^{-\frac{1}{2}}
\no{F(t')}_{L_x^1(\mathbb R)}
dt' \right)^2 dz dt
\nn\\
&\!\leqslant
\!
\int_{t=0}^T  \int_{z=0}^{T-t} \!\! \frac{1}{z^{1+2m}} 
\bigg(
\int_{t'=t}^{t+z}    \!\!
\left(t+z-t'\right)^{-\frac{1}{2}}
\left(t'-t\right)^{-\left(\frac{1}{2}-b\right)}
dt'
\bigg)^{\!2} 
\bigg(\sup_{t'\in [t, t+z]} \!\! \left(t'-t\right)^{\frac{1}{2}-b}
\no{F(t')}_{L_x^1(\mathbb R)}
\!\bigg)^{\!2} \! dz dt
 \nn
}
for some $b$ to be determined. Then,  \eqref{beta-function-l-nh} with  $\beta_1 = \frac{1}{2}$, $\beta_2 = \frac{1}{2} - b$ and $b>-\frac{1}{2}$ implies
\eee{
\eqref{fh-ivp-te-p2}
\lesssim
T^{2(b-m)+1}
\left(\sup_{t\in [0, T]} t^{\frac{1}{2}-b}
\no{F(t)}_{L_x^1(\mathbb R)}
 \right)^2, \quad b>m, \ b<  \tfrac{1}{2}.
 \label{fh-ivp-te-p2-est}
}
Altogether, and since $m=\frac{2s+1}{4}\geqslant 0$, estimate \eqref{fh-ivp-te-p2-est} is valid for  $\tfrac{2s+1}{4} < b < \frac 12$ and $-\tfrac 32 < s < \tfrac 12$.
Furthermore, since $t\leqslant T<1$, using H\"older's inequality estimates   \eqref{fh-ivp-te-p1-est}  and \eqref{fh-ivp-te-p2-est} combine to 
\eee{
\no{W(x)}_m
\lesssim
\sqrt T
\prod_{j=1}^p
\left(
\sup_{t\in[0, T]} 
t^{\frac 1p\left(\frac 12-b\right)}
\,
 \no{F_j(t)}_{L_x^p(\mathbb R)}
 \right),
}
which, together estimate  \eqref{fivp-te-l2-nh} and the fact that $b\geqslant 0$ for $m\geqslant 0$, implies estimate \eqref{nh-fivp-te-l}.

Continuity in $x$ follows by estimating in the same way as above the $H_t^{\frac{2s+1}{4}}(0, T)$-norm of the difference $W(x_n)-W(x)$ for any sequence $\{x_n\} \to x$ and then employing the dominated convergence theorem to deduce that this norm vanishes in the limit $n\to\infty$.
\end{proof}

\subsection{Additional estimates} 
The Sobolev-type estimates of Theorem \ref{nh-fivp-t} introduce the need for estimating the solutions of the linear IVPs \eqref{nh-lh-ivp} and \eqref{nh-flh-ivp} in the space $C^\alpha([0, T]; L_x^{p}(\mathbb R))$ defined by \eqref{nh-cbeta-def-line}.
\begin{theorem}[\b{$C^\alpha([0, T]; L_x^{p}(\mathbb R))$-estimate for the homogeneous linear IVP}]\label{nh-ivp-ca-t}
Suppose $\frac 12 - \frac{1}{p} \leqslant s < \frac 12$. 
Then, the solution $U = S\big[U_0; 0\big]$ of the linear heat IVP \eqref{nh-lh-ivp} given by formula \eqref{nh-lh-ivp-sol} admits the estimate
\eee{\label{nh-ivp-ca-est}
\no{U}_{C^\alpha([0, T]; L_x^p(\mathbb R))}
\lesssim 
T^\alpha  \no{U_0}_{H_x^s(\mathbb R)}.
}
\end{theorem}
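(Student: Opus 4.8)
The plan is to derive \eqref{nh-ivp-ca-est} directly from the space estimate \eqref{nh-ivp-se} already established in Theorem \ref{nh-ivp-t}, by passing from the $L_x^2$-based Sobolev norm to the $L_x^p$-norm through a Sobolev embedding in the spatial variable. The point is that the hypothesis $\frac 12-\frac 1p\leqslant s<\frac 12$ together with $p\geqslant 2$ (recall $p=2,3,4,\dots$) is precisely the range in which the one-dimensional embedding $H_x^s(\mathbb R)\hookrightarrow L_x^p(\mathbb R)$ holds, the borderline exponent $s=\frac 12-\frac 1p$ included. Consequently the solution $U=S\big[U_0;0\big]$ of \eqref{nh-lh-ivp}, which by \eqref{nh-ivp-se} is bounded in $H_x^s(\mathbb R)$ uniformly in $t$, is automatically bounded in $L_x^p(\mathbb R)$ uniformly in $t$, and the temporal weight $t^\alpha$ then supplies the factor $T^\alpha$ for free.

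Concretely, I would proceed in three short steps. First, by the aforementioned Sobolev embedding,
\eee{
\no{U(t)}_{L_x^p(\mathbb R)}
\lesssim
\no{U(t)}_{H_x^s(\mathbb R)},
\qquad t\in[0,T].
\nn
}
Second, the homogeneous space estimate \eqref{nh-ivp-se} gives $\no{U(t)}_{H_x^s(\mathbb R)}\leqslant\no{U_0}_{H_x^s(\mathbb R)}$, so that $\no{U(t)}_{L_x^p(\mathbb R)}\lesssim\no{U_0}_{H_x^s(\mathbb R)}$ uniformly in $t\in[0,T]$. Third, since $\alpha=\frac 1p\big(\frac 12-b\big)>0$ (because $b<\frac 12$) and $0\leqslant t\leqslant T$, multiplying by the weight $t^\alpha$ and taking the supremum yields
\eee{
\sup_{t\in[0,T]} t^\alpha\no{U(t)}_{L_x^p(\mathbb R)}
\leqslant
T^\alpha \sup_{t\in[0,T]}\no{U(t)}_{L_x^p(\mathbb R)}
\lesssim
T^\alpha\no{U_0}_{H_x^s(\mathbb R)},
\nn
}
which is exactly \eqref{nh-ivp-ca-est}.

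It remains to confirm that $U\in C([0,T];L_x^p(\mathbb R))$, so that $U$ genuinely lies in the space \eqref{nh-cbeta-def-line}; this is immediate, since the map $t\mapsto U(t)$ is continuous into $H_x^s(\mathbb R)$ by Theorem \ref{nh-ivp-t} and the embedding $H_x^s(\mathbb R)\hookrightarrow L_x^p(\mathbb R)$ is continuous. The single delicate point---and hence the only real obstacle---is the validity of the Sobolev embedding at the critical endpoint $s=\frac 12-\frac 1p$, where the hypothesis is sharp; away from the endpoint one has room to spare. I would handle this by invoking the sharp fractional Sobolev inequality $\dot H_x^{\frac 12-\frac 1p}(\mathbb R)\hookrightarrow L_x^p(\mathbb R)$ (whose inhomogeneous counterpart $H_x^s(\mathbb R)\hookrightarrow L_x^p(\mathbb R)$ follows upon also controlling the low frequencies via $H_x^s(\mathbb R)\hookrightarrow L_x^2(\mathbb R)$), thereby avoiding any logarithmic loss at this exact exponent that a cruder tool such as the Hausdorff--Young inequality would incur and that the clean factor $T^\alpha$ would not tolerate.
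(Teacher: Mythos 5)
Your proposal is correct and takes essentially the same route as the paper: both reduce \eqref{nh-ivp-ca-est} to the one-dimensional Sobolev inequality $\no{f}_{L^p(\mathbb R)}\lesssim \no{f}_{H^{\frac 12-\frac 1p}(\mathbb R)}$ (valid for $2\leqslant p<\infty$, endpoint included), followed by the space estimate \eqref{nh-ivp-se} and the trivial bound $t^\alpha\leqslant T^\alpha$, with continuity in time inherited from Theorem \ref{nh-ivp-t}. The paper simply cites \cite{lp-book} for the embedding rather than re-deriving it via the homogeneous inequality, and likewise observes that the restriction $s<\frac 12$ comes only from the definition \eqref{nh-cbeta-def} of $\alpha$, not from the proof itself.
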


\begin{proof}[\textnormal{\textbf{Proof of Theorem \ref{nh-ivp-ca-t}}}]
The Sobolev inequality
$\no{f}_{L^p(\mathbb R)} \leqslant c \no{f}_{H^{\frac 12 - \frac 1p}(\mathbb R)}$, 
$2 \leqslant p < \infty$,
(cf.  \cite{lp-book}) implies 
$$
\no{U}_{C^\alpha([0, T]; L_x^{p}(\mathbb R))}
=
\sup_{t\in[0, T]} \left[t^\alpha \no{U(t)}_{L_x^{p}(\mathbb R)} \right]
\lesssim 
\sup_{t\in[0, T]} \Big[t^\alpha \no{U(t)}_{H_x^{\frac 12 - \frac{1}{p}}(\mathbb R)}\Big].
$$
Moreover, for $s \geqslant \tfrac 12 - \tfrac{1}{p}$ the space estimate \eqref{nh-ivp-se} yields
$
\no{U(t)}_{H_x^{\frac 12 - \frac{1}{p}}(\mathbb R)}
\leqslant
\no{U_0}_{H_x^s(\mathbb R)}
$,
which gives in turn  estimate \eqref{nh-ivp-ca-est}.  Note that the restriction $s<\frac 12$ comes from the definition  \eqref{nh-cbeta-def} of $\alpha$ and is not required in the current proof.
Continuity in time follows  via the dominated convergence theorem as in Theorem \ref{nh-ivp-t}.
\end{proof}

We complete this section with the estimation of the forced linear IVP \eqref{nh-flh-ivp}.
\begin{theorem}[\b{$C^\alpha([0, T]; L_x^{p}(\mathbb R))$-estimate for the forced linear IVP}]\label{nh-fivp-ca-t}
For $F=\prod_{j=1}^p F_j$, the solution $W = S\big[0; F\big]$ of the forced linear heat IVP \eqref{nh-flh-ivp} given by formula \eqref{nh-flh-ivp-sol-comb}  satisfies the estimate
\eee{\label{nh-fivp-cat-est}
\no{W}_{C^{\alpha}([0, T]; L_x^{p}(\mathbb R))}
\lesssim 
T^{\alpha}
\prod_{j=1}^p \no{F_j}_{C^{\alpha}([0, T]; L_x^{p}(\mathbb R))}.
}
\end{theorem}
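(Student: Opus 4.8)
The plan is to estimate $\no{W(t)}_{L_x^p(\mathbb R)}$ by combining the Duhamel representation \eqref{nh-flh-ivp-sol-d} with the smoothing property of the heat semigroup, in the same spirit as the proof of the space estimate \eqref{nh-fivp-se-l}. First I would apply Minkowski's integral inequality to \eqref{nh-flh-ivp-sol-d} in the $L_x^p(\mathbb R)$-norm to obtain
$$
\no{W(t)}_{L_x^p(\mathbb R)}
\leqslant
\int_{t'=0}^t \no{S\big[F(\cdot, t'); 0\big](t-t')}_{L_x^p(\mathbb R)}\, dt'.
$$
Observing from \eqref{nh-lh-ivp-sol} that $S\big[U_0; 0\big](\cdot, s)$ is the convolution of $U_0$ with the heat kernel $G_s(x) = (4\pi s)^{-\frac 12} e^{-x^2/(4s)}$, Young's convolution inequality together with $\no{G_s}_{L_x^p(\mathbb R)} \simeq s^{-\frac 12\left(1-\frac 1p\right)}$ yields the $L^1 \to L^p$ smoothing bound $\no{S\big[F(\cdot, t'); 0\big](t-t')}_{L_x^p(\mathbb R)} \lesssim (t-t')^{-\frac 12\left(1-\frac 1p\right)} \no{F(t')}_{L_x^1(\mathbb R)}$.

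Next I would control $\no{F(t')}_{L_x^1(\mathbb R)}$ via Hölder's inequality and the definition of the $C^\alpha$-norm exactly as in \eqref{nh-Ff-ca-est}, namely $\no{F(t')}_{L_x^1(\mathbb R)} \leqslant (t')^{-p\alpha} \prod_{j=1}^p \no{F_j}_{C^\alpha([0, T]; L_x^p(\mathbb R))}$. Substituting these two bounds reduces the problem to the beta-type integral $\int_{t'=0}^t (t-t')^{-\beta_1}(t')^{-\beta_2}\, dt'$ with $\beta_1 = \frac 12\left(1-\frac 1p\right) = \frac{p-1}{2p}$ and $\beta_2 = p\alpha = \frac 12 - b$. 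Both exponents are strictly less than $1$: indeed $\beta_1 < \frac 12$ since $p\geqslant 2$, while $\beta_2 = \frac 12 - b < \frac 12$ because $b>0$. Hence estimate \eqref{beta-function-l-nh} applies and produces the factor $t^{1-\beta_1-\beta_2} = t^{\frac{1}{2p}+b}$.

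Multiplying by $t^\alpha$ and taking the supremum over $t\in[0,T]$, the resulting power $\alpha + \frac{1}{2p} + b$ is positive, so the supremum is attained at $t=T$; using $T<1$ to absorb the surplus factor $T^{\frac{1}{2p}+b}\leqslant 1$ then yields the claimed estimate \eqref{nh-fivp-cat-est}. Continuity in time would follow via the dominated convergence theorem, exactly as in Theorem \ref{nh-fivp-t}. The main point to watch is the integrability of both singularities in the beta integral: this hinges on the parameter relation $\alpha = \frac 1p\big(\frac 12 - b\big)$ with $\frac{2s+1}{4} < b < \frac 12$ from \eqref{nh-cbeta-def}, which forces $\beta_2 = \frac 12 - b < 1$ (taming the endpoint $t'=0$), while $\beta_1<1$ is automatic from the heat kernel bound. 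Everything else is a routine consequence of Young's inequality, Hölder's inequality and the semigroup smoothing already used in the treatment of \eqref{nh-fivp-se-l}.
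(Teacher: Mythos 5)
Your proof is correct, but it takes a genuinely different route from the paper's. The paper proves Theorem \ref{nh-fivp-ca-t} in essentially two lines: it invokes the Sobolev inequality $\no{f}_{L^p(\mathbb R)}\leqslant c\no{f}_{H^{\frac 12-\frac 1p}(\mathbb R)}$, $2\leqslant p<\infty$ (this is where the restriction $p\geqslant 2$ enters), and then applies the already-established space estimate \eqref{nh-fivp-se-l} at $s=\frac 12-\frac 1p$, using $\alpha>0$ and $T<1$ to convert the factor $\sqrt T$ into $T^\alpha$. You instead re-derive the bound from scratch in physical space: Minkowski on the Duhamel formula \eqref{nh-flh-ivp-sol-d}, the $L^1\to L^p$ heat-kernel smoothing bound $\no{S\big[F(\cdot,t');0\big](t-t')}_{L_x^p(\mathbb R)}\lesssim (t-t')^{-\frac 12\left(1-\frac 1p\right)}\no{F(t')}_{L_x^1(\mathbb R)}$ from Young's inequality, H\"older to pass to the $C^\alpha$-norms of the $F_j$, and the beta-integral estimate \eqref{beta-function-l-nh}. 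The two arguments are closely parallel at the level of exponents: your $\beta_1=\frac 12\left(1-\frac 1p\right)=\frac{p-1}{2p}$ is precisely the exponent $\frac 14+\frac s2$ appearing in the paper's proof of \eqref{nh-fivp-se-l} when $s=\frac 12-\frac 1p$, so in effect you have unwound the paper's chain (Fourier-side smoothing into $H_x^{\frac 12-\frac 1p}$, then embedding into $L^p$) into a single direct physical-space step. What your route buys is self-containedness and no reliance on the Sobolev embedding; what the paper's route buys is brevity, since it recycles \eqref{nh-fivp-se-l} verbatim. Your parameter bookkeeping checks out ($\beta_1<1$ automatically, $\beta_2=p\alpha=\frac 12-b<1$ since $b>0$, final power $\alpha+\frac{1}{2p}+b>0$, and absorption of $T^{\frac{1}{2p}+b}\leqslant 1$ using $T<1$), and deferring continuity in time to the dominated convergence argument of Theorem \ref{nh-fivp-t} is exactly what the paper does as well.
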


\begin{proof}[\textnormal{\textbf{Proof of Theorem \ref{nh-fivp-ca-t}.}}]
The Sobolev inequality $\no{f}_{L^p(\mathbb R)} \leqslant c \no{f}_{H^{\frac 12 - \frac 1p}(\mathbb R)}$, $2 \leqslant p < \infty$, implies
\eee{\nn
\no{W}_{C^\alpha([0, T]; L_x^{p}(\mathbb R))}
\lesssim 
\sup_{t\in[0, T]} \Big[t^\alpha \no{W(t)}_{H_x^{\frac 12 - \frac{1}{p}}(\mathbb R)}\Big],
}
from which we can obtain the desired estimate using the space estimate \eqref{nh-fivp-se-l} for $s=\frac 12 - \frac 1p$ (this restricts $p\geqslant 2$) and the fact that $\alpha>0$ and $T<1$. Continuity in time follows via the dominated convergence theorem as in Theorem \ref{nh-fivp-t}.
\end{proof}
%
%
%
%
%
%
%
%
%
\section{Linear IBVP Estimates on the Half-Line}
\label{nh-ibvp-le-s}

Having completed the estimation of IVPs \eqref{nh-lh-ivp}  and \eqref{nh-flh-ivp}, we turn our attention to the remaining two linear problems involved in the decomposition of Subsection \ref{nh-dec-ss}, namely the reduced IBVPs \eqref{nh-lh-ibvp-r} and \eqref{nh-flh-ibvp-r}. These two problems are of the same type, since 
(i) they are both homogeneous and with zero initial datum; (ii) their boundary data $G_0$ and $W|_{x=0}$ both vanish at $t=0$ for $s>\frac 12$; (iii) their boundary data $G_0$ and $W|_{x=0}$ belong to  $H_t^{\frac{2s+1}{4}}(0, T)$. Indeed, thanks to the time estimates \eqref{nh-ivp-te} and \eqref{nh-fivp-te-h} and the extension inequalities \eqref{nh-uU},   \eqref{nh-Ff-se} and \eqref{nh-Ff-ca-est}, we have
\sss{\label{nh-G0-W0-est}
\ddd{
\no{G_0}_{H_t^{\frac{2s+1}{4}}(0, T)}
&\lesssim
\no{u_0}_{H_x^s(0, \infty)}
+
\no{g_0}_{H_t^{\frac{2s+1}{4}}(0, T)}, \quad && 0 \leqslant s < \tfrac 32,
\label{nh-G0-est}
\\
\no{W|_{x=0}}_{H_t^{\frac{2s+1}{4}}(0, T)}
&\lesssim
\sqrt T  
 \prod_{j=1}^p \sup_{t\in [0, T]}   \no{f_j(t)}_{H_x^s(0, \infty)},
&&
\tfrac 12 <s \leqslant \tfrac 32,
\label{nh-W0-est-h}
\\
\no{W|_{x=0}}_{H_t^{\frac{2s+1}{4}}(0, T)}
&\lesssim
\sqrt T\,
\prod_{j=1}^p
\no{f_j}_{C^{\alpha}([0, T]; L_x^{p}(0, \infty))},
&-&\tfrac 12 \leqslant s < \tfrac 12.
\label{nh-W0-est-l} 
}
}
Hence, IBVPs \eqref{nh-lh-ibvp-r} and \eqref{nh-flh-ibvp-r} can be treated as specific cases of the problem
\sss{\label{nh-ibvp-r}
\ddd{
&u_t-u_{xx} = 0, &&x\in (0,\infty), \ t\in (0,T),
\label{nh-ibvp-r-eq} \\
&u(x,0)= 0, &&x\in [0,\infty),  \label{nh-ibvp-r-ic} \\[-.5em]
&u(0,t) = g_0(t)\in H_t^{\frac{2s+1}{4}}(0, T),\quad &&t\in [0, T],
}
}
with $g_0(0)=0$ for $s>\tfrac 12$, whose solution is given by the UTM formula \eqref{nh-flh-utm-sol-T} with $u_0=f=0$.

\subsection{An IBVP with a compactly supported boundary datum}
For the purpose of obtaining Sobolev-type estimates, it would be 
convenient if formula \eqref{nh-flh-utm-sol-T} involved  the \textit{Fourier transform} of $g_0$  instead of the time transform $\widetilde g_0$. Note that this would indeed be the case if $g_0$ were \textit{compactly supported} in $[0, T]$. Recalling that $T<1$, this observation motivates the ``embedding'' of IBVP \eqref{nh-ibvp-r} inside the \textit{pure IBVP}
\sss{\label{nh-ibvp-rv}
\ddd{
&v_t-v_{xx} = 0, &&x\in (0,\infty), \ t\in (0,2),
\label{nh-ibvp-rv-eq} \\
&v(x,0)= 0, &&x\in [0,\infty),  \label{nh-ibvp-rv-ic} \\[-.5em]
&v(0,t) = g(t)\in H_t^{\frac{2s+1}{4}}(\mathbb R), \quad && 
t\in [0, 2],
\label{nh-ibvp-rv-bc}
}
}
where the boundary datum $g\in H_t^{\frac{2s+1}{4}}(\mathbb R)$ is an extension of $g_0$ such that $\text{supp}(g)\subset (0, 2)$.
In particular, if $0 \leqslant s <\frac 12$ then $g$ is simply the extension of $g_0\in H_t^{\frac{2s+1}{4}}(0, T)$ by zero outside $(0, T)$, while if $\frac 12<s\leqslant \frac 32$ then $g$ is defined along the lines of \cite{fhm2017} as
$$
g(t) = \left\{
\def\arraystretch{1}
\begin{array}{ll}
E_\theta(t), &t\in (0,2),
\\
0, &t\in (0,2)^c,
\end{array}
\right.
$$
where 
$
E_\theta(t) 
=
\theta(t) E(t) 
$
with $\theta\in C^\infty_0(\mathbb R)$ a smooth cut-off function  such that $|\theta(t)|\leqslant 1$ for all $t\in\mathbb R$, 
$\theta(t) = 1$ for $|t|\leqslant 1$ and $\theta(t)= 0$ for $|t|\geqslant 2$, and $E\in H_t^{\frac{2s+1}{4}}(\mathbb R)$ an extension of $g_0\in H_t^{\frac{2s+1}{4}}(0, T)$  such that 
$
\no{E}_{H_t^{\frac{2s+1}{4}}(\mathbb R)}
\leqslant
c
\left\| g_0 \right\|_{H_t^{\frac{2s+1}{4}}(0,T)}.
$
In both cases, by construction we have $\text{supp}(g)\subset (0, 2)$ and 
\eee{\label{nh-bcdet}
\no{g}_{H_t^{\frac{2s+1}{4}}(\mathbb R)}
\leqslant 
c_s 
\no{g_0}_{H_t^{\frac{2s+1}{4}}(0, T)},
\quad
0\leqslant s\leqslant \tfrac 32, \ s\neq\tfrac 12.
}

By the definition of $g$ as an extension of $g_0$, the pure IBVP \eqref{nh-ibvp-rv} restricted on $(0, \infty) \times (0, T)$ becomes IBVP \eqref{nh-ibvp-r}.
Therefore, the solution of the latter problem can be estimated via the solution of former one, which is given by the UTM formula \eqref{nh-flh-utm-sol-T}  as
\ddd{\label{nh-ibvp-rv-sol}
v(x,t) 
= 
S\big[0, g; 0\big](x, t) 
=
\frac{1}{\pi} \int_{k=0}^\infty e^{ia^3kx+ik^2 t} \, k \widehat g(k^2) dk
+
\frac{1}{\pi} \int_{k=0}^\infty e^{iakx-ik^2t}  \, k\widehat g(-k^2)dk
}
with $a = e^{i\frac{\pi}{4}}$ and 
$
\widehat g(\tau) = \int_{t\in \mathbb R} e^{-i\tau t} g(t) dt
$
being the Fourier transform of $g$.

\subsection{Sobolev-type estimates}

We begin with the estimation of the pure IBVP \eqref{nh-ibvp-rv} in Sobolev spaces, which reveals to us the correct space for the boundary datum, namely the space $H_t^{\frac{2s+1}{4}}(0, T)$.

\begin{theorem}[\b{Sobolev-type estimates for the pure IBVP}]
\label{nh-ibvp-rv-t}
The solution $v=S\big[0,g;0\big]$ of the pure IBVP \eqref{nh-ibvp-rv} given by the UTM formula \eqref{nh-ibvp-rv-sol} admits the space and time estimates
\ddd{
\no{v}_{C([0, 2]; H_x^s(0,\infty))}
\leqslant
c_s \no{g}_{H_t^{\frac{2s+1}{4}}(\mathbb R)},
\quad && s\geqslant 0,
\label{nh-ibvp-rv-se}
\\
\no{v}_{C([0, \infty); H_t^{\frac{2s+1}{4}}(0,2))}
\leqslant
c_s \no{g}_{H_t^{\frac{2s+1}{4}}(\mathbb R)},
\quad && s \in \mathbb R.
\label{nh-ibvp-rv-te}
}
\end{theorem}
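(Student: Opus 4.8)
The plan is to handle the space estimate \eqref{nh-ibvp-rv-se} and the time estimate \eqref{nh-ibvp-rv-te} separately, after recasting the solution \eqref{nh-ibvp-rv-sol} in a single unified form. Changing variables $\tau=k^2$ in both integrals and then sending $\tau\mapsto-\tau$ in the second one, I would rewrite the solution as

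\eee{
v(x,t)
=
\frac{1}{2\pi}\int_{\tau\in\mathbb R} e^{i\tau t}\, \rho(x,\tau)\, \widehat g(\tau)\, d\tau,
\quad
\rho(x,\tau)=
\begin{cases}
e^{ia^3\sqrt{\tau}\,x}, & \tau\geqslant 0,\\
e^{ia\sqrt{-\tau}\,x}, & \tau<0,
\end{cases}
\nn
}

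where $a=e^{i\pi/4}$. The decisive structural feature is that, since $\text{Re}(ia^3)=\text{Re}(ia)=-\tfrac{1}{\sqrt 2}$, the symbol satisfies $\left|\rho(x,\tau)\right|=e^{-\frac{x}{\sqrt 2}\sqrt{|\tau|}}\leqslant 1$ for every $x\geqslant 0$. Thus $v(x,\cdot)$ is a time-Fourier contraction of $g$ and decays exponentially in $x$.

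The time estimate \eqref{nh-ibvp-rv-te} is then immediate and, indeed, holds for every $s\in\mathbb R$. The displayed formula identifies $v(x,\cdot)$ as the inverse Fourier transform in $t$ of $\rho(x,\cdot)\widehat g$, so that its temporal Fourier transform is exactly $\rho(x,\tau)\widehat g(\tau)$. Hence, writing $m=\frac{2s+1}{4}$ and using $\left|\rho\right|\leqslant 1$ together with the restriction property of the $H^m$ norm,

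\eee{
\no{v(x)}_{H_t^m(0,2)}^2
\leqslant
\no{v(x)}_{H_t^m(\mathbb R)}^2
=
\int_{\tau\in\mathbb R}\left(1+\tau^2\right)^{m}\left|\rho(x,\tau)\right|^2\left|\widehat g(\tau)\right|^2 d\tau
\leqslant
\no{g}_{H_t^m(\mathbb R)}^2,
\nn
}

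uniformly in $x\geqslant 0$. Continuity in $x$ follows by dominated convergence applied to this integral, exactly as in the proof of Theorem \ref{nh-ivp-t}.

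The space estimate \eqref{nh-ibvp-rv-se} is where the real work lies. Writing $v=v_1+v_2$ for the two integrals in \eqref{nh-ibvp-rv-sol} and setting $G(k)=k\widehat g(k^2)$, I would first establish the $L^2_x$ bound by brute force: expanding $\no{v_1(t)}_{L_x^2(0,\infty)}^2$ as a double integral in $(k_1,k_2)$ and carrying out the $x$-integration explicitly, the factor $e^{ia^3k_1x+\overline{ia^3}k_2x}$ integrates to the kernel $\frac{\sqrt 2}{(k_1+k_2)+i(k_1-k_2)}$, whose modulus is $\big(2(k_1^2+k_2^2)\big)^{-1/2}\leqslant (k_1+k_2)^{-1}$. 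The resulting bilinear form is therefore controlled by Hilbert's inequality $\int_{0}^{\infty}\!\int_{0}^{\infty}\frac{f(k_1)f(k_2)}{k_1+k_2}\,dk_1dk_2\leqslant \pi\no{f}_{L^2}^2$, yielding $\no{v_1(t)}_{L_x^2}^2\lesssim \no{G}_{L^2(0,\infty)}^2$ uniformly in $t$ (the $t$-dependence enters only through the unimodular factor $e^{ik^2t}$). This is precisely the $L^2$-boundedness of the Laplace transform of Lemma \ref{nh-tlap-l}, now read along the ray $\{e^{i\pi/4}x:x\geqslant 0\}$ in the complex $k$-plane. For integer $s$, the same computation applied to $\partial_x^j v_1$ with $0\leqslant j\leqslant s$ simply replaces $G$ by $k^jG$, so that after the change of variable $\tau=k^2$,

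\eee{
\no{v_1(t)}_{H_x^s(0,\infty)}^2
\lesssim
\sum_{j=0}^{s}\int_{k=0}^{\infty}k^{2j}\left|G(k)\right|^2 dk
\simeq
\int_{\tau=0}^{\infty}\left(1+\tau\right)^{s}\sqrt{\tau}\,\left|\widehat g(\tau)\right|^2 d\tau
\lesssim
\no{g}_{H_t^{m}(\mathbb R)}^2,
\nn
}

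since $\left(1+\tau\right)^{s}\sqrt{\tau}\lesssim \left(1+\tau^2\right)^{m}$ for $s\geqslant 0$ with $2m=s+\tfrac 12$; the term $v_2$ is treated identically via $\widehat g(-\tau)$. Fractional $s\geqslant 0$ then follows by interpolating between consecutive integers, the data exponent $m=\frac{2s+1}{4}$ being affine in $s$ so that the interpolated source and target spaces match. Continuity in $t$ again follows from dominated convergence.

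The main obstacle is the space estimate, and within it the explicit $x$-integration producing the Hilbert-type kernel: one must confirm that the oscillatory ray integral defining $v_1$ genuinely reduces to a homogeneous kernel of degree $-1$, hence is $L^2$-bounded, rather than being merely decaying. This is exactly where the sign condition $\text{Re}(ia^3)<0$ (equivalently, the correct orientation of $\partial D^+$) is indispensable and where Lemma \ref{nh-tlap-l} does the heavy lifting. A secondary technical point is the fractional-$s$ case, for which I would favor interpolation over a direct Gagliardo-seminorm computation, thereby avoiding reproduction of the lengthy fractional-norm estimates already carried out for Theorem \ref{nh-ivp-t}.
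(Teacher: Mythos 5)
Your proposal is correct. Your treatment of the time estimate \eqref{nh-ibvp-rv-te} is essentially identical to the paper's: substitute $k=\sqrt\tau$, read off the temporal Fourier transform of $v$, bound the ray exponentials by $1$ (your $\left|\rho(x,\tau)\right|=e^{-\frac{x}{\sqrt 2}\sqrt{|\tau|}}\leqslant 1$ is exactly the factor $e^{-\sqrt 2\sqrt{|\tau|}\,x}$ appearing in the paper), and use that $H_t^{\frac{2s+1}{4}}(0,2)$ carries a restriction norm.

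For the space estimate \eqref{nh-ibvp-rv-se} you take a genuinely different route. The paper works with the physical-space norm \eqref{nls-hl-sobfrac}, splits into the cases $\lfloor s\rfloor=0$, $\beta=0$, $\lfloor s\rfloor\geqslant 1$, reduces the oscillatory differences in the Gagliardo seminorm to real exponential differences via Lemma \ref{nh-gamma-l}, and then closes every case with the $L^2$-boundedness of the Laplace transform (Lemma \ref{nh-tlap-l}) together with \eqref{nh-gamma-est}; this handles all real $s\geqslant 0$ directly. You instead prove the integer-$s$ case by squaring out $\no{\p_x^j v_1(t)}_{L_x^2(0,\infty)}^2$, integrating in $x$ to produce the kernel $\frac{\sqrt 2}{(k_1+k_2)+i(k_1-k_2)}$ (your computation of this kernel and of its modulus bound is correct, up to a harmless factor $\sqrt 2$), and invoking Hilbert's inequality — which is, in substance, a re-derivation of Lemma \ref{nh-tlap-l} along the ray $e^{i\pi/4}\mathbb R_+$, as you note — and you then reach fractional $s$ by complex interpolation between consecutive integers, the affine dependence of the data exponent $\frac{2s+1}{4}$ on $s$ making the endpoint spaces match. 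What each approach buys: yours avoids the lengthy Gagliardo-seminorm computations and Lemma \ref{nh-gamma-l} altogether, and the explicit kernel makes transparent why $\mathrm{Re}(ia^3)<0$ (i.e.\ the correct choice of rays $\p D^+$) is what produces an $L^2$-bounded, homogeneous-of-degree-$(-1)$ operator; the paper's argument is self-contained, uniform in $s$, and — importantly — the same fractional machinery is reused essentially verbatim for the finite-interval analogue (Theorem \ref{nh-fi-ibvp-rv-t}), where your kernel computation would be considerably messier because of the denominators $e^{i\gamma k\ell}-e^{-i\gamma k\ell}$. Two minor caveats on your side, neither fatal: (i) the interpolation step tacitly uses that the restriction spaces $H^s(0,\infty)$ form a complex interpolation scale, which is true but rests on a Stein-type extension/retract argument — a standard black box the paper never needs; and (ii) for non-integer $s$, continuity in $t$ cannot literally be obtained by dominated convergence, since the interpolated norm is not an integral expression; one should instead establish continuity for a dense class of data (or at the integer endpoints) and conclude via the uniform-in-$t$ operator bounds and density.
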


\begin{remark}
The space estimate \eqref{nh-ibvp-rv-se} for the pure IBVP \eqref{nh-ibvp-rv} is central in our analysis as it motivates the space $H_t^{\frac{2s+1}{4}}(0, T)$ for the boundary datum $g_0$ of the nonlinear IBVP \eqref{nh-rd-ibvp}. 
Recall that another source of motivation for the boundary data space is the time regularity \eqref{nh-ivp-te} of the homogeneous linear IVP solution.
\end{remark}

\noindent
\begin{Proof}[Proof of Theorem \ref{nh-ibvp-rv-t}]
To establish the space estimate \eqref{nh-ibvp-rv-se}, we write $v  = v_1 + v_2$ with
\ddd{
\label{v1-def}
&v_1(x, t)
=
\int_{k=0}^\infty
e^{i \gamma_1 k x} G_1(k, t) d k, \quad
G_1(k, t)
=
\frac{1}{\pi}e^{i k^2 t } k \widehat g(k^2),
\quad
\gamma_1= a^3 = e^{i\frac{3\pi}{4}},
\\
\label{v2-def}
&v_2(x, t)
=
\int_{k=0}^\infty
e^{i \gamma_2 k x} G_2(k, t) d k, \quad
G_2(k, t)
=
\frac{1}{\pi}e^{-i k^2 t } k \widehat g (-k^2),
\quad
 \gamma_2= a = e^{i\frac{\pi}{4}}.
}
The estimation of $v_1$ and $v_2$ is entirely analogous. Thus, we only give the details for $v_1$.
We employ the physical space definition of the $H_x^s(0,\infty)$-norm:
\eee{
\label{nls-hl-sobfrac}
\no{v_1(t)}_{H_x^s(0, \infty)}
=
\sum_{j=0}^{\left\lfloor s \right\rfloor}
\no{\p_x^j v_1(t)}_{L_x^2(0, \infty)}
+
\big\| \p_x^{\left\lfloor s \right\rfloor}v_1(t)\big\|_\beta,\quad s=\left\lfloor s \right\rfloor +\beta\geqslant 0, \ 0\leqslant \beta <1, 
}
where $\lfloor \cdot \rfloor$ denotes the floor function
and   the fractional norm $\no{\cdot}_\beta$ is defined by 
\eee{
\no{v_1(t)}_{\beta}
=
\left(\int_{x=0}^\infty \int_{z=0}^\infty \dfrac{\left| v_1(x+z, t)- v_1(x, t)\right|^2}{z^{1+2\beta}}\, dzdx\right)^{\frac 12}, \quad 0 < \beta < 1.
\nn
}
There are three cases to consider:
(i) $\left\lfloor s \right\rfloor=0$ and $\beta\neq 0$;
(ii) $\beta=0$; 
(iii) $\left\lfloor s \right\rfloor\neq 0$ and  $\beta\neq 0$. 
\vskip 3mm
\noindent
\textit{\b{(i) The case $\left\lfloor s \right\rfloor =0$ and $\beta\neq 0$.}}
Then,  $s= \beta\in (0,1)$ and we need to estimate  $\no{v_1(t)}_{L_x^2(0,\infty)}$ and $\no{v_1(t)}_\beta$. The first norm will be estimated together with the $L_x^2(0, \infty)$-norms of the higher derivatives of $v_1$ in case (ii). 
For the second norm, we have
$$
\no{v_1(t)}_\beta^2
\leqslant
\int_{z=0}^\infty\int_{x=0}^\infty \frac{1}{z^{1+2\beta}}
\left(\int_{k=0}^\infty
\left|e^{i\gamma_1 k (x+z)}-e^{i\gamma_1 k x}\right| \left|G_1(k ,t)\right|dk \right)^2
dx dz
$$
and use the following lemma.
\begin{lemma}[\cite{fhm2016}, Lemma 2.1]
\label{nh-gamma-l}
If $\gamma=\gamma_R+i\gamma_I$ with $\gamma_I>0$, then 
$$
\left|e^{i\gamma  k x}-e^{i\gamma k z}\right|
\leqslant
\sqrt 2\left(1+\tfrac{|\gamma_R|}{\gamma_I}\right)
\left|e^{-\gamma_I  k x}-e^{-\gamma_I  k z}\right|
\quad
\forall k, x, z \geqslant 0.
$$
\end{lemma}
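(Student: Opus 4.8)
The plan is to reduce everything to a single integral identity and then finish with an elementary scalar inequality. First I would dispose of the trivial cases: if $k=0$ or $x=z$ both sides vanish, and since each side is symmetric under the interchange $x \leftrightarrow z$, I may assume without loss of generality that $x > z \geqslant 0$ and $k>0$.

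The key step is to write each difference as an integral via the fundamental theorem of calculus. Since $i\gamma = i\gamma_R - \gamma_I$, one has $\frac{d}{dy}e^{i\gamma k y} = i\gamma k\, e^{i\gamma k y}$ and $\frac{d}{dy}e^{-\gamma_I k y} = -\gamma_I k\, e^{-\gamma_I k y}$, so that
\[
e^{i\gamma kx} - e^{i\gamma kz} = i\gamma k \int_z^x e^{i\gamma k y}\,dy,
\qquad
e^{-\gamma_I kx} - e^{-\gamma_I kz} = -\gamma_I k \int_z^x e^{-\gamma_I k y}\,dy.
\]
The crucial observation is that $\mathrm{Re}(i\gamma) = -\gamma_I$, whence $\left|e^{i\gamma k y}\right| = e^{-\gamma_I k y}$ for every $y\geqslant 0$. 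Thus both differences are governed by the \emph{same} positive kernel $e^{-\gamma_I k y}$.

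Taking absolute values in the first identity and using $\left|e^{i\gamma k y}\right| = e^{-\gamma_I k y}$ gives
\[
\left|e^{i\gamma kx} - e^{i\gamma kz}\right| \leqslant |\gamma|\, k \int_z^x e^{-\gamma_I k y}\,dy,
\]
while the second identity, whose integrand is real and positive on $(z,x)$, yields exactly
\[
\left|e^{-\gamma_I kx} - e^{-\gamma_I kz}\right| = \gamma_I\, k \int_z^x e^{-\gamma_I k y}\,dy.
\]
Dividing and cancelling the identical integral factors leaves the sharp bound $\left|e^{i\gamma kx}-e^{i\gamma kz}\right| \leqslant \tfrac{|\gamma|}{\gamma_I}\,\left|e^{-\gamma_I kx}-e^{-\gamma_I kz}\right|$.

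It remains only to compare constants. Writing $t = |\gamma_R|/\gamma_I \geqslant 0$, one has $|\gamma|/\gamma_I = \sqrt{t^2+1}$, so I would finish by verifying the scalar inequality $\sqrt{t^2+1} \leqslant \sqrt2\,(1+t)$; squaring reduces it to $0 \leqslant 1 + 4t + t^2$, which holds for all $t\geqslant 0$. This upgrades the sharp constant $|\gamma|/\gamma_I$ to the stated $\sqrt2\,(1+|\gamma_R|/\gamma_I)$. There is no serious obstacle: the one point requiring care is recognizing that both differences are driven by the common positive kernel $e^{-\gamma_I k y}$, after which the estimate is immediate and everything else is the routine algebra above.
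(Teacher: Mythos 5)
Your proof is correct, and every step checks out: the reduction to $k>0$, $x>z$, the fundamental-theorem-of-calculus representations $e^{i\gamma kx}-e^{i\gamma kz}=i\gamma k\int_z^x e^{i\gamma ky}\,dy$ and $e^{-\gamma_I kx}-e^{-\gamma_I kz}=-\gamma_I k\int_z^x e^{-\gamma_I ky}\,dy$, the identity $\left|e^{i\gamma ky}\right|=e^{-\gamma_I ky}$ that makes both differences comparable through the same positive kernel, and the final scalar inequality $\sqrt{t^2+1}\leqslant\sqrt{2}\,(1+t)$. Note, however, that this paper does not prove the lemma at all: it imports it verbatim from the cited KdV half-line paper (Lemma 2.1 of [FHM2]), so there is no in-paper argument to compare against. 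Relative to the proof in that source, which proceeds essentially by expanding the squared modulus $\left|e^{i\gamma kx}-e^{i\gamma kz}\right|^2$ into $\left(e^{-\gamma_I kx}-e^{-\gamma_I kz}\right)^2$ plus an oscillatory term $2e^{-\gamma_I k(x+z)}\left(1-\cos(\gamma_R k(x-z))\right)$ and then controlling the latter by elementary calculus estimates, your integral-kernel argument is shorter and more transparent, and it buys a strictly sharper conclusion: the natural constant it produces is $|\gamma|/\gamma_I=\sqrt{1+(\gamma_R/\gamma_I)^2}$, which is bounded by $1+|\gamma_R|/\gamma_I$, so even the factor $\sqrt{2}$ in the statement is superfluous in your version; the stated bound then follows by the trivial comparison of constants you carry out at the end.
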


Employing Lemma \ref{nh-gamma-l} with $\gamma= \gamma_1 = e^{i\frac{3\pi}{4}}$ and subsequently making the change of variables
 $\frac{\sqrt 2}{2}x \to x$, $\frac{\sqrt 2}{2}z \to z$, we obtain
$$
\label{v1-bound-L0}
\no{v_1(t)}_\beta^2
\lesssim
\int_{z=0}^\infty
\frac{1}{z^{1+2\beta}}
\int_{x=0}^\infty
\left(\int_{k=0}^\infty e^{-k x}\big(1-e^{-k z}\big)\left|G_1(k,t)\right|dk \right)^2
dx dz.
$$
We identify the  $k$-integral in \eqref{v1-bound-L0} as the Laplace transform of $Q_{z,t}(k) \doteq \big(1-e^{-k z}\big)\left|G_1(k,t)\right|$. Therefore, using Lemma \ref{nh-tlap-l}  we infer
\ddd{
\no{v_1(t)}_\beta^2
&\lesssim
\int_{z=0}^\infty \frac{1}{z^{1+2\beta}} \no{Q_{z,t}}^2_{L^2_{k}(0,\infty)}dz
\nonumber
\\
&=
\int_{k=0}^\infty \left|G_1(k,t)\right|^2\int_{z=0}^\infty \frac{\big(1-e^{-k z}\big)^2}{z^{1+2\beta}}dz dk
\lesssim
\int_{k=0}^\infty k^2 \big|\widehat g(k^2)\big|^2 |k|^{2\beta}dk,
\nn
}
with the last inequality due to estimate \eqref{nh-gamma-est} and the definition \eqref{v1-def} of $G_1$. 
Then, making the change of variable $\tau=k^2$ we obtain 
\eee{\label{nh-v1-se-1}
\no{v_1(t)}_\beta^2
\lesssim
\int_{\tau=0}^\infty |\tau|^{\frac{2\beta+1}{2}} \big|\widehat g(\tau)\big|^2 d\tau
\leqslant
\no{g}_{H_{t}^{\frac{2\beta+1}{4}}(\mathbb R)}, \quad 0<\beta<1,
}
which is the desired estimate for $v_1$ and which indicates that the \textit{optimal} value of $m$ is indeed $\frac{2s+1}{4}$.
\vskip 3mm
\noindent
\textit{\b{(ii) The case $\beta=0$.}}
Differentiating formula \eqref{v1-def} and taking the $L^2$-norm, we have
$$
\no{\p_x^jv_1(t)}_{L_x^2(0,\infty)}
\leqslant
\no{\int_{k=0}^\infty e^{-\frac{\sqrt 2}{2} kx} \,|k|^{j+1} \,  \big|\widehat g(k^2)\big| dk}_{L_x^2(0, \infty)}.
$$
Therefore, exploiting once again the boundedness of the Laplace transform in $L^2$  and estimating as in case (i) above, we find
\eee{
\no{\p_x^jv_1(t)}_{L_x^2(0,\infty)}
\lesssim
 \big\| |k|^{j+1} \, \widehat g(k^2)\big\|_{L_k^2(0,\infty)}
\lesssim
\no{g}_{H_t^{\frac{2j+1}{4}}(\mathbb R)}^2,
 \quad
 j=0, 1, \ldots, \left\lfloor s \right\rfloor,
 \nn
}
so that
\eee{\label{nh-v1-se-2}
\no{v_1(t)}_{H_x^{\lfloor s \rfloor}(0,\infty)}
\lesssim
\no{g}_{H_t^{\frac{2\lfloor s \rfloor +1}{4}}(\mathbb R)}, \quad \lfloor s \rfloor \geqslant 0.
}
\vskip 3mm
\noindent
\textit{\b{(iii) The case $\left\lfloor s \right\rfloor \neq 0$ and $\beta \neq 0$.}}
We now have $s=\left\lfloor s \right\rfloor+\beta$
with $\left\lfloor s \right\rfloor\geqslant 1$ and $0<\beta<1$.
Thanks to our earlier work in cases (i) and (ii), it suffices to estimate the fractional norm
$\big\|\p_x^{\left\lfloor s \right\rfloor}v_{1}(t)\big\|_{\beta}$.
Proceeding as in case (i), we obtain 
\eee{
\big\| \p_x^{\left\lfloor s \right\rfloor}v_{1}(t)\big\|_{\beta}^2
\lesssim
\int_{k=0}^\infty    k^{2(\left\lfloor s \right\rfloor + \beta)+2} \big|\widehat g(k^2)\big|^2 dk
=
\no{k^{\left\lfloor s \right\rfloor + \beta+1} \widehat g(k^2)}_{L_k^2(0, \infty)}^2,
\nn
}
which can be estimated  like the corresponding term in case (ii)  to yield
\eee{\label{nh-v1-se-3}
\big\|\p_x^{\left\lfloor s \right\rfloor}v_{1}(t)\big\|_\beta
\lesssim
\no{g}_{H_t^{\frac{2s+1}{4}}(\mathbb R)}, \quad \lfloor s \rfloor \geqslant 1.
}

Estimates \eqref{nh-v1-se-1}, \eqref{nh-v1-se-2} and \eqref{nh-v1-se-3} combined with the definition \eqref{nls-hl-sobfrac} imply
\eee{\label{nh-v1-se}
\no{v_1(t)}_{H_x^s(0, \infty)}
\leqslant
c_s \no{g}_{H_t^{\frac{2s+1}{4}}(\mathbb R)}, \quad s\geqslant 0, \ t\in [0, 2],
}
which is the space estimate \eqref{nh-ibvp-rv-se} for $v_1$. As noted earlier, this estimate can be established for $v_2$ in the exact same way. Hence, the proof of the space estimate \eqref{nh-ibvp-rv-se} for $v$ is complete. Continuity in time follows via the dominated convergence theorem as in Theorem 5 of \cite{fhm2017}.

\vskip 3mm
The time estimate \eqref{nh-ibvp-rv-te} is easier to establish. In particular, the change of variable $k=\sqrt{\tau}$ turns formula \eqref{nh-ibvp-rv-sol} into
$$
v(x,t)
\simeq
\int_{\tau=0}^\infty e^{i a^3 \sqrt \tau x+i\tau t}  \, \widehat g(\tau)  d\tau
+
\int_{\tau=-\infty}^0 e^{ia\sqrt{-\tau} x+ i\tau t} \, \widehat g(\tau)  d\tau.
$$
Therefore, since the $x$-exponentials involved above are bounded by $1$ for $x\geqslant 0$, we obtain
\ddd{
\no{v(x)}_{H_t^{\frac{2s+1}{4}}(\mathbb R)}^2
&\lesssim
\int_{\tau=0}^\infty \left(1+\tau^2\right)^{\frac{2s+1}{4}}
e^{-\sqrt 2 \sqrt \tau x}  \big|\widehat g(\tau) \big|^2 d\tau
+
\int_{\tau=-\infty}^0 \left(1+\tau^2\right)^{\frac{2s+1}{4}} e^{-\sqrt 2 \sqrt{-\tau} x} \big| \widehat g(\tau) \big|^2 d\tau
\nn\\
&\leqslant
\int_{\tau=0}^\infty \left(1+\tau^2\right)^{\frac{2s+1}{4}}
 \big|\widehat g(\tau) \big|^2 d\tau
+
\int_{\tau=-\infty}^0 \left(1+\tau^2\right)^{\frac{2s+1}{4}}
 \big| \widehat g(\tau) \big|^2 d\tau
=
\no{g}_{H_t^{\frac{2s+1}{4}}(\mathbb R)}^2.
\nn
}
Continuity in $x$ follows via the dominated convergence theorem as in Theorem 5 of \cite{fhm2017}.
\end{Proof}

\subsection{Additional estimate}
We complete the analysis of IBVP \eqref{nh-ibvp-rv} with the estimation in the space $C^\alpha([0, T]; L_x^{p}(0, \infty))$.
\begin{theorem}[\b{$C^\alpha([0, T]; L_x^{p}(0, \infty))$-estimate for the pure IBVP}]
\label{nh-ibvp-rv-ca-t}
If $\frac 12-\frac{1}{p}<s<\frac 12$, then  the solution $v = S\big[0, g; 0\big]$ of the pure IBVP \eqref{nh-ibvp-rv}  given by the UTM formula \eqref{nh-ibvp-rv-sol} admits the estimate
\eee{\label{nh-ibvp-rv-ca-est-0}
\no{v}_{C^\alpha([0, T]; L_x^{p}(0, \infty))}
\lesssim 
T^\alpha  \no{g}_{H_t^{\frac{2s+1}{4}}(\mathbb R)}.
}
\end{theorem}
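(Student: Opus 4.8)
The plan is to follow the strategy employed for the analogous IVP estimate of Theorem \ref{nh-ivp-ca-t}, using now the pure IBVP space estimate \eqref{nh-ibvp-rv-se} in place of the IVP space estimate \eqref{nh-ivp-se}. The starting point is the Sobolev embedding $\no{f}_{L^p(0, \infty)} \lesssim \no{f}_{H_x^{1/2 - 1/p}(0, \infty)}$, valid for $2 \leqslant p < \infty$, applied to $v(t)$ for each fixed $t$. This reduces the $L_x^p$-norm appearing in the definition of the $C^\alpha([0, T]; L_x^p(0, \infty))$-norm to the Sobolev norm $H_x^{1/2 - 1/p}(0, \infty)$, so that
\[
\no{v}_{C^\alpha([0, T]; L_x^p(0, \infty))}
=
\sup_{t\in[0, T]} \big[ t^\alpha \no{v(t)}_{L_x^p(0, \infty)} \big]
\lesssim
\sup_{t\in[0, T]} \big[ t^\alpha \no{v(t)}_{H_x^{1/2 - 1/p}(0, \infty)} \big].
\]

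Next I would invoke the space estimate \eqref{nh-ibvp-rv-se}, which applies precisely because the regularity exponent $\frac 12 - \frac 1p$ is nonnegative for $p \geqslant 2$. This yields, uniformly in $t$,
\[
\no{v(t)}_{H_x^{1/2 - 1/p}(0, \infty)}
\leqslant
c_s \no{g}_{H_t^{\frac 14 (2(1/2 - 1/p)+1)}(\mathbb R)}
=
c_s \no{g}_{H_t^{\frac 12 (1 - 1/p)}(\mathbb R)}.
\]
Because the hypothesis $\frac 12 - \frac 1p < s$ forces $\frac 12 (1 - 1/p) = \frac 14 (2(1/2 - 1/p)+1) \leqslant \frac{2s+1}{4}$, the embedding of temporal Sobolev spaces gives $\no{g}_{H_t^{\frac 12 (1 - 1/p)}(\mathbb R)} \leqslant \no{g}_{H_t^{\frac{2s+1}{4}}(\mathbb R)}$. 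Combining this with the elementary bound $t^\alpha \leqslant T^\alpha$ for $t \in [0, T]$, legitimate since $\alpha > 0$ under the stated hypotheses on $b$, produces the claimed estimate \eqref{nh-ibvp-rv-ca-est-0}.

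As in the IVP setting, continuity in time of $v$ as a map into $L_x^p(0, \infty)$ follows by estimating $\no{v(t_n) - v(t)}_{L_x^p(0, \infty)}$ through the same chain of inequalities and passing to the limit with the dominated convergence theorem. There is no genuine obstacle in this argument: the essential work, namely controlling the pure IBVP solution by its boundary datum in the correct temporal Sobolev norm, has already been accomplished in Theorem \ref{nh-ibvp-rv-t}, and the present statement merely combines that space estimate with the Sobolev embedding in $x$ and the weight $t^\alpha$. The only points demanding care are bookkeeping ones: confirming that the condition $p \geqslant 2$ (equivalently $\frac 12 - \frac 1p \geqslant 0$) is what makes \eqref{nh-ibvp-rv-se} applicable, and verifying that the temporal index produced by the embedding, namely $\frac 14 (2(1/2 - 1/p)+1)$, does not exceed $\frac{2s+1}{4}$, which is exactly what the lower bound $s > \frac 12 - \frac 1p$ guarantees.
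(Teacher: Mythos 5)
Your argument is correct, but it is not the route the paper takes for this theorem: it is the paper's proof of the \emph{IVP} analogue (Theorem \ref{nh-ivp-ca-t}) transplanted to the half-line IBVP. The paper proves Theorem \ref{nh-ibvp-rv-ca-t} by a direct computation on the UTM formula \eqref{nh-ibvp-rv-sol}: it splits $v=v_1+v_2$ as in Theorem \ref{nh-ibvp-rv-t}, applies Minkowski's integral inequality, evaluates the half-line $L_x^p$-norm of the decaying exponential explicitly, $\big(\int_{x=0}^\infty e^{-\frac{1}{\sqrt 2}pkx}\,dx\big)^{1/p}\simeq k^{-1/p}$, and then, after substituting $k=\sqrt\tau$, applies Cauchy--Schwarz against the weight $(1+\tau^2)^{\frac{2s+1}{4}}$, the hypothesis $s>\frac12-\frac1p$ being precisely the condition for $\int_0^\infty \tau^{-1/p}(1+\tau^2)^{-\frac{2s+1}{4}}d\tau$ to converge at infinity. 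You instead bypass the solution formula entirely, reducing to the already-established space estimate \eqref{nh-ibvp-rv-se} at the exponent $\frac12-\frac1p\geqslant 0$ via the embedding $H_x^{\frac12-\frac1p}\hookrightarrow L_x^p$, and then using monotonicity of the temporal Sobolev norms, $\no{g}_{H_t^{\frac12-\frac{1}{2p}}(\mathbb R)}\leqslant\no{g}_{H_t^{\frac{2s+1}{4}}(\mathbb R)}$. Both arguments are sound. Your route is shorter, recycles Theorem \ref{nh-ibvp-rv-t}, and in fact yields the estimate on the closed range $s\geqslant\frac12-\frac1p$ (only the non-strict inequality is used), matching the range of Theorem \ref{nh-ivp-ca-t}; two small points deserve explicit mention in your write-up, namely that the embedding passes from $\mathbb R$ to $(0,\infty)$ because $H^\sigma(0,\infty)$ carries the restriction (infimum-over-extensions) norm, and that continuity in $t$ with values in $L_x^p(0,\infty)$ follows most cleanly from the continuity assertion of Theorem \ref{nh-ibvp-rv-t} composed with the same embedding, rather than from a separate dominated-convergence argument. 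What the paper's direct computation buys is self-containedness and visibility of the mechanism: the factor $k^{-1/p}$ gained from the exponential decay $e^{-ckx}$ of the UTM integrand is exactly what makes the boundary contribution $p$-integrable on the half-line, and it pinpoints where the lower bound on $s$ genuinely enters as an integrability threshold.
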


\begin{proof}[\textnormal{\textbf{Proof of Theorem \ref{nh-ibvp-rv-ca-t}.}}]
We decompose $v=v_1+v_2$ as in the proof of Theorem \ref{nh-ibvp-rv-t} and estimate $v_1$ and $v_2$ separately. Using Minkowski's integral inequality and the fact that $x\geqslant 0$, we find
\ddd{
\no{v_1(t)}_{L_x^{p}(0, \infty)}
&\lesssim
\int_{k=0}^\infty \left(\int_{x=0}^\infty \left|e^{i a^3 k x + i k^2 t}   k \widehat g  (k^2)\right|^{p}  dx \right)^{\frac{1}{p}} dk
\nn\\
&=
\int_{k=0}^\infty 
k \big|\widehat g  (k^2)\big|
\left(\int_{x=0}^\infty  e^{-\frac{1}{\sqrt 2} p  k x}    dx \right)^{\frac{1}{p}} 
dk
\simeq
\int_{k=0}^\infty 
k^{1-\frac{1}{p}} \big|\widehat g  (k^2)\big|  dk.
\nn
}
Thus, letting $ k = \sqrt \tau$ and applying the Cauchy-Schwarz inequality in $\tau$, we obtain
\eee{
\no{v_1(t)}_{L_x^{p}(0, \infty)}
\lesssim
\left(
\int_{\tau=0}^\infty 
\tau^{-\frac{1}{p}} \left(1+\tau^2\right)^{-\frac{2s+1}{4}} d\tau\right)^{\frac 12} 
\no{g}_{H_t^{\frac{2s+1}{4}}(\mathbb R)}
\simeq
\no{g}_{H_t^{\frac{2s+1}{4}}(\mathbb R)}, \quad s>\tfrac 12-\tfrac{1}{p},
\nn
}
from which we deduce estimate \eqref{nh-ibvp-rv-ca-est-0} for $v_1$. 
The estimation of $v_2$ is entirely analogous.
Note that the restriction $s<\frac 12$ appearing in the hypothesis of the theorem originates from the condition on $b$ involved in \eqref{nh-cbeta-def} and is not needed in the present proof.
\end{proof}

\subsection{Summary of linear IBVP estimates on the half-line}
In view of Theorems \ref{nh-ivp-t}
-\ref{nh-fivp-ca-t}, \ref{nh-ibvp-rv-t}, \ref{nh-ibvp-rv-ca-t} and inequalities \eqref{nh-uU}
-\eqref{nh-Ff-ca-est}, \eqref{nh-G0-W0-est}, \eqref{nh-bcdet}, the decomposition \eqref{nh-flh-ibvp-sup} yields the following estimates for the forced linear IBVP \eqref{nh-flh-ibvp}.

\begin{theorem}[\b{Estimates for the forced linear heat on the half-line}] 
\label{nh-flh-ibvp-wp-t}
For $f=\prod_{j=1}^p  f_j$, the solution $u=S\big[u_0, g_0; f\big]$ of the forced linear heat IBVP  \eqref{nh-flh-ibvp}  satisfies the space estimates
\sss{\label{nh-flh-ibvp-se}
\ddd{
\no{S\big[u_0, g_0; f\big]}_{C([0, T]; H_x^s(0, \infty))}
&\leqslant
c_{s, p}\Big(
\no{u_0}_{H_x^s(0, \infty)}
+
\no{g_0}_{H^{\frac{2s+1}{4}}_t(0, T)}
\label{nh-flh-ibvp-se-h}
\\
&\hskip 1.4cm
+
\sqrt T \prod_{j=1}^p  \no{f_j}_{C([0, T]; H_x^s(0, \infty))}\Big),
\quad
\tfrac 12<s<\tfrac 32,
\nn\\
\no{S\big[u_0, g_0; f\big]}_{C([0, T]; H_x^s(0, \infty))}
&\leqslant
c_{s, p}\Big(
\no{u_0}_{H_x^s(0, \infty)}
+
\no{g_0}_{H^{\frac{2s+1}{4}}_t(0, T)}
\label{nh-flh-ibvp-se-l}
\\
&\hskip 1.4cm
+
\sqrt T \prod_{j=1}^p  \no{f_j}_{C^\alpha([0, T]; L_x^{p}(0, \infty))}\Big),
\quad
0\leqslant s<\tfrac 12,
\nn
}
}
the time estimates
\sss{\label{nh-flh-ibvp-te}
\ddd{
\no{S\big[u_0, g_0; f\big]}_{C([0, \infty); H_t^{\frac{2s+1}{4}}(0, T))}
&\leqslant
c_{s, p}\Big(
\no{u_0}_{H_x^s(0, \infty)}
+
\no{g_0}_{H^{\frac{2s+1}{4}}_t(0, T)}
\label{nh-flh-ibvp-te-h}
\\
&\hskip 1.3cm
+
\sqrt T \prod_{j=1}^p  \no{f_j}_{C([0, T]; H_x^s(0, \infty))}\Big),
\quad
\tfrac 12<s<\tfrac 32, 
\nn\\
\no{S\big[u_0, g_0; f\big]}_{C([0, \infty); H_t^{\frac{2s+1}{4}}(0, T))}
&\leqslant
c_{s, p}\Big(
\no{u_0}_{H_x^s(0, \infty)}
+
\no{g_0}_{H^{\frac{2s+1}{4}}_t(0, T)}
\label{nh-flh-ibvp-te-l}
\\
&\hskip 1.3cm
+
\sqrt T \prod_{j=1}^p  \no{f_j}_{C^\alpha([0, T]; L_x^{p}(0, \infty))}\Big),
\quad
0\leqslant s<\tfrac 12,
\nn
}
}
and the $L^{p}$-estimate
\ddd{\label{nh-flh-ibvp-ca-est}
\no{S\big[u_0, g_0; f\big]}_{C^\alpha([0, T]; L_x^{p}(0, \infty))}
&\leqslant
c_{s, p}\Big(
\no{u_0}_{H_x^s(0, \infty)}
+
\no{g_0}_{H^{\frac{2s+1}{4}}_t(0, T)}
\\
&\hskip 1.3cm
+
T^{\alpha}
\prod_{j=1}^p \no{f_j}_{C^{\alpha}([0, T]; L_x^{p}(0, \infty))}\Big),
\quad
\tfrac 12-\tfrac{1}{p} < s<\tfrac 12, 
\nn
}
where $c_{s, p}>0$ is a constant that depends only on $s$ and $p$.
\end{theorem}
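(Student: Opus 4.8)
The plan is to exploit the linearity-driven decomposition \eqref{nh-flh-ibvp-sup}, which expresses the solution operator $S\big[u_0, g_0; f\big]$ as a superposition of four pieces whose individual estimates have already been established. Applying the triangle inequality to \eqref{nh-flh-ibvp-sup} reduces each of the three estimates in the statement to bounding
\[
S\big[U_0; 0\big]\big|_{x\in(0,\infty)}, \quad
S\big[0; F\big]\big|_{x\in(0,\infty)}, \quad
S\big[0, G_0; 0\big], \quad
S\big[0, W|_{x=0}; 0\big]
\]
separately in the relevant norm, after which the auxiliary inequalities \eqref{nh-uU}, \eqref{nh-Ff-se}, \eqref{nh-Ff-ca-est}, \eqref{nh-G0-W0-est} and \eqref{nh-bcdet} convert the resulting whole-line and intermediate quantities back into the data norms $\no{u_0}_{H_x^s(0,\infty)}$, $\no{g_0}_{H_t^{\frac{2s+1}{4}}(0,T)}$ and $\prod_j\no{f_j}$.

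More concretely, for the two IVP components I would invoke Theorem \ref{nh-ivp-t} (together with \eqref{nh-uU}) for $S\big[U_0;0\big]$ and Theorem \ref{nh-fivp-t} for $S\big[0;F\big]$, selecting the ``smooth'' estimates \eqref{nh-fivp-se-h}, \eqref{nh-fivp-te-h} when $\tfrac 12 < s < \tfrac 32$ and the ``rough'' estimates \eqref{nh-fivp-se-l}, \eqref{nh-fivp-te-l} when $0\le s<\tfrac 12$; in the latter range the $F_j$ are extended by zero and \eqref{nh-Ff-ca-est} supplies the passage to $\prod_j\no{f_j}_{C^\alpha}$. For the two IBVP components I would use that each is of the reduced type \eqref{nh-ibvp-r}, hence embeds into the pure IBVP \eqref{nh-ibvp-rv}; applying Theorem \ref{nh-ibvp-rv-t} and then the extension bound \eqref{nh-bcdet} estimates them by $\no{G_0}_{H_t^{\frac{2s+1}{4}}}$ and $\no{W|_{x=0}}_{H_t^{\frac{2s+1}{4}}}$, which \eqref{nh-G0-W0-est} in turn controls by the data. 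The $L^{p}$-estimate \eqref{nh-flh-ibvp-ca-est} follows the same template but draws on the $C^\alpha$ results, namely Theorem \ref{nh-ivp-ca-t} for $S\big[U_0;0\big]$, Theorem \ref{nh-fivp-ca-t} for $S\big[0;F\big]$, and Theorem \ref{nh-ibvp-rv-ca-t} for the two boundary pieces.

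Collecting the four contributions yields the claimed estimates, with the powers of $T$ absorbed using $T<1$ (in particular, the factor $T$ from \eqref{nh-fivp-se-h} is bounded by $\sqrt T$). The bulk of the argument is therefore bookkeeping, and I anticipate the only real care to be required in matching each component to the correct version of its estimate across the prescribed $s$-ranges: the smooth/rough dichotomy at $s=\tfrac 12$ must be applied consistently to all four pieces simultaneously, and the lower restriction $s>\tfrac 12-\tfrac 1p$ in \eqref{nh-flh-ibvp-ca-est} must be tracked back to the hypotheses of Theorems \ref{nh-ivp-ca-t} and \ref{nh-ibvp-rv-ca-t}. A secondary point worth emphasizing is that the intermediate boundary data $G_0$ and $W|_{x=0}$ genuinely belong to $H_t^{\frac{2s+1}{4}}$ only by virtue of the time estimates \eqref{nh-ivp-te} and \eqref{nh-fivp-te-h}--\eqref{nh-fivp-te-l}; it is precisely these time estimates that render the IBVP pieces admissible inputs to Theorem \ref{nh-ibvp-rv-t}, so the whole scheme hinges on the matching of space regularity in $x$ with the exponent $\tfrac{2s+1}{4}$ in $t$.
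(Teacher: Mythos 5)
Your proposal is correct and follows exactly the paper's own route: the paper proves Theorem \ref{nh-flh-ibvp-wp-t} precisely by combining the superposition \eqref{nh-flh-ibvp-sup} with Theorems \ref{nh-ivp-t}--\ref{nh-fivp-ca-t}, \ref{nh-ibvp-rv-t}, \ref{nh-ibvp-rv-ca-t} and the inequalities \eqref{nh-uU}--\eqref{nh-Ff-ca-est}, \eqref{nh-G0-W0-est}, \eqref{nh-bcdet}, exactly as you describe. Your bookkeeping of the $s$-ranges, the absorption of $T$ into $\sqrt{T}$ via $T<1$, and the observation that the time estimates \eqref{nh-ivp-te} and \eqref{nh-fivp-te-h}--\eqref{nh-fivp-te-l} are what legitimize $G_0$ and $W|_{x=0}$ as boundary data for the pure IBVP are all consistent with the paper.
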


%
%
%
%
%
%
%
%
%
\section{Local Well-Posedness on the Half-Line}
\label{nh-lwp-s}

The linear estimates of Theorem \ref{nh-flh-ibvp-wp-t} for the forced linear heat IBVP \eqref{nh-flh-ibvp} will now be employed for showing well-posedness via contraction mapping of the (nonlinear) reaction-diffusion IBVP \eqref{nh-rd-ibvp}. We shall begin with the case of ``smooth'' data ($s>\frac 12$), which corresponds to Theorem \ref{nh-ibvp-wp-h-t}, and then proceed to the case of ``rough'' data ($s<\frac 12$), which corresponds to Theorem \ref{nh-ibvp-wp-l-t}.

\subsection{Proof of Theorem \ref{nh-ibvp-wp-h-t}}
\label{nh-nh-ibvp-wp-h-t-ss}
We shall establish local well-posedness in the sense of Hadamard, i.e. we shall show existence, uniqueness and continuous dependence of the solution on the initial and boundary data.
\vskip 3mm
\noindent
\textbf{I. Existence and uniqueness.}
For $f = |u|^{p-1}u=u^p$, $\tfrac{p-1}{2}\in\mathbb N$, the forced linear IBVP solution $S\big[u_0, g_0; f\big]$  given by \eqref{nh-flh-utm-sol-T} induces the following  iteration map for the reaction-diffusion IBVP \eqref{nh-rd-ibvp}:
\eee{\nn
u\mapsto \Phi  u = \Phi_{u_0, g_0}  u
 \doteq
S\big[u_0, g_0; u^p \big].
}
Existence and uniqueness  in the case of ``smooth'' data ($s>\frac 12$) will be established by showing that the above map is a contraction in the space 
\eee{\label{nh-x-def-p}
X
=
C([0, T^*]; H_x^s(0, \infty))\cap C([0, \infty); H_t^{\frac{2s+1}{4}}(0, T^*)),
}
where the lifespan $T^*\in (0, T]$ is to be determined.
\vskip 3mm
\noindent
\textit{\b{Showing that $u\mapsto \Phi  u$ is onto $X$.}}
Let $B(0, \varrho)=\left\{u\in X : \no{u}_X\leqslant \varrho\right\}$ be a ball centered at $0$ with radius $\varrho = 2c_{s, p}\no{(u_0,g_0)}_D$, where $c_{s, p}>0$ is the constant appearing in Theorem \ref{nh-flh-ibvp-wp-t} and the data norm $\no{\cdot}_D$ is defined by
\eee{\label{nh-data-norm-def}
\no{(u_0,g_0)}_D
=
\no{u_0}_{H_x^s(0,\infty)}+\no{g_0}_{H_t^{\frac{2s+1}{4}}(0,T)}.
}
For $u\in B(0, \varrho)$ and $\tfrac 12 < s < \tfrac 32$, the space estimate  \eqref{nh-flh-ibvp-se-h}
and the time estimate   \eqref{nh-flh-ibvp-te-h} imply
\ddd{
\no{\Phi  u}_{X} 
&=
\sup_{t\in[0, T^*]} 
\no{S\big[u_0, g_0; u^p \big](t)}_{H_x^s(0,\infty)} 
+
\sup_{x\in[0, \infty)} 
\no{S\big[u_0, g_0; u^p \big](x)}_{H_t^{\frac{2s+1}{4}}(0, T^*)} 
\nn\\
&\leqslant
c_{s, p} 
\left(
\no{\left(u_0, g_0\right)}_D 
+
\sqrt{T^*}
\left\|u \right\|_X^p
\right)
\leqslant
\frac \varrho2  + c_{s, p}    \sqrt{T^*} \, \varrho^p.
\nn
}
Thus, for $T^*\in (0, T]$  such that
\eee{\label{nh-lifespan-onto}
\frac \varrho2+c_{s, p}   \sqrt{T^*}\, \varrho^p \leqslant  \varrho
  \Leftrightarrow 
T^*
\leqslant
\frac{1}{(2c_{s, p} )^{2p}  \left\|\left(u_0, g_0\right)\right\|_D^{2(p-1)}}
}
the map $u\mapsto \Phi u$ is onto the ball $B(0, \varrho)$.
\vskip 3mm
\noindent
\textit{\b{Showing that  $u\mapsto \Phi  u$ is a contraction in $X$.}}
We shall show that 
\eee{
\label{nh-contr-ineq}
\left\| \Phi  u_1 - \Phi  u_2 \right\|_{X} \leqslant \tfrac 12 \left\| u_1-u_2\right\|_X \quad \forall u_1, u_2\in B(0, \varrho)\subset X.
}
Noting that
$
\Phi  u_1 - \Phi  u_2
=
S\big[0, 0;  u_1^p-u_2^p \, \big]
$
and using estimates \eqref{nh-flh-ibvp-se-h} and \eqref{nh-flh-ibvp-te-h}, we obtain
$$
\left\| \Phi  u_1 - \Phi  u_2 \right\|_{X} 
\leqslant
c_{s, p} \, \sqrt{T^*} \sup_{t\in[0,T^*] }  \left\| \left(u_1^p-u_2^p\right)(t) \right\|_{H_x^s(0,\infty)},
\quad \tfrac 12 < s < \tfrac 32.
$$
Thus, employing the identity
\eee{
u_1^p  -  u_2^p
=
\big(u_1^{p-1} + u_1^{p-2} u_2 + \ldots + u_1 u_2^{p-2} + u_2^{p-1}\big) \left(u_1-u_2\right)
\label{nn-diff}
}
and the algebra property in $H_x^s(0, \infty)$,   we deduce  
$$
\left\| \Phi  u_1 - \Phi  u_2 \right\|_{X} 
\leqslant
p\, c_{s, p}  \sqrt{T^*}  \varrho^{p-1} \left\| u_1 - u_2\right\|_X, \quad 
\tfrac 12 < s < \tfrac 32.
$$
Hence, for $T^*$ such that
\eee{\label{nh-lifespan-contr}
p\, c_{s, p} \sqrt{T^*} \varrho^{p-1} 
\leqslant \frac 12
\Leftrightarrow
T^*  \leqslant \frac{1}{p^2 \left(2c_{s, p} \right)^{2p}  \no{(u_0, g_0)}_D^{2\left(p-1\right)}}
}
the contraction inequality \eqref{nh-contr-ineq} is satisfied.
Since $p>1$,  condition \eqref{nh-lifespan-onto} is weaker than condition \eqref{nh-lifespan-contr}. Therefore, for lifespan $T^*$ given by
\eee{\label{nh-lifespan}
T^*
=
\min
\bigg\{ 
T,  \, 
\frac{1}{p^2 \left(2c_{s, p}\right)^{2p} \no{(u_0, g_0)}_D^{2\left(p-1\right)}}
\bigg\}, \quad c_{s, p}>0,
}
the contraction mapping theorem implies that the map $u\mapsto\Phi u$ has a unique fixed point in $B(0, \varrho)$. Equivalently, the integral equation $u=\Phi  u$ for the reaction-diffusion IBVP \eqref{nh-rd-ibvp} has a unique solution $u\in B(0, \varrho)\subset X$.
The proof of existence and uniqueness is complete.
\vskip 3mm
\noindent
\textbf{II. Continuity of the data-to-solution map.} 
We complete Hadamard well-posedness by showing that the data-to-solution map
$
H_x^s(0,\infty)\times H_t^{\frac{2s+1}{4}}(0,T) 
\ni
\left(u_0, g_0\right) 
\mapsto 
u\in X
$
is continuous.

Let $\big(u_{0_1}, g_{0_1}\big)$ and $\big(u_{0_2}, g_{0_2}\big)$ be two pairs of data in the data space $D$ that lie inside a ball $B(\varrho, \delta)\subset D$ of radius $\delta>0$ centered at a distance $\varrho$ from the origin. 
Denote by $u_1=\Phi_{u_{0_1},\, g_{0_1}} u_1$ and $u_2 = \Phi_{u_{0_2},\, g_{0_2}} u_2$  the  corresponding solutions to the reaction-diffusion IBVP \eqref{nh-rd-ibvp}, and by $T_{u_1}$ and $T_{u_2}$ the lifespans of those solutions  given according to \eqref{nh-lifespan}.
Since 
$
\max\left\{\no{\left(u_{0_1}, g_{0_1}\right)}_D, \no{\left(u_{0_2}, g_{0_2}\right)}_D\right\} \leqslant \varrho+\delta
$
and $p>1$,  it follows that
$$
\min\left\{T_{u_1}, T_{u_2}\right\}
 \geqslant  
 \min\bigg\{ T, \, \frac{1}{p^2 \left(2c_{s, p} \right)^{2p}  \left(\varrho+\delta\right)^{2\left(p-1\right)}}\bigg\}   \doteq T_c.
$$
Hence, both $u_1$ and $u_2$ are guaranteed to exist for $0\leqslant t\leqslant T_c$. Replacing $T^*$ with the common lifespan $T_c$ in  \eqref{nh-x-def-p}  gives rise to the space  
\eee{\nn
X_c = C([0, T_c]; H_x^s(0,\infty)) \cap C([0, \infty); H_t^{\frac{2s+1}{4}}(0, T_c)).
}
Observe that $X_{u_1}, X_{u_2} \subset X_c$ with the spaces $X_{u_1}$ and $X_{u_2}$ defined by \eqref{nh-x-def-p} with $T^*$ replaced by
$T_{u_1}$ and $T_{u_2}$ respectively.
We shall now determine the radius $\varrho_c$ of a ball $B(0, \varrho_c)\subset X_c$ such that $u_1, u_2\in B(0, \varrho_c)$ and  
\eee{
\label{nls-hl-lipaim}
\no{u_1 - u_2}_{X_c} \leqslant 2c_{s, p}  \left\| \left(u_{0_1}, g_{0_1}\right) - \left(u_{0_2}, g_{0_2}\right)\right\|_D.
}
Since $u_1$ and $u_2$ are obtained as fixed points of the maps $u_1\mapsto \Phi u_1$ and $u_2\mapsto \Phi u_2$ in   $X_{u_1}$ and $X_{u_2}$ respectively, we have
$$
\no{u_1 - u_2}_{X_c}
=
\left\| \Phi u_1 - \Phi u_2 \right\|_{X_c}
=
\left\| 
S\big[u_{0_1}-u_{0_2}, g_{0_1}-g_{0_2}; u_1^p-u_2^p \big]
\right\|_{X_c}.
$$
Thus, estimates \eqref{nh-flh-ibvp-se-h} and \eqref{nh-flh-ibvp-te-h} together with identity \eqref{nn-diff} and the algebra property in $H_x^s(0, \infty)$  imply
\eee{\label{nn-lip-cond}
\no{u_1 - u_2}_{X_c} 
\leqslant 
\frac{c_{s, p} }{1- c_{s, p}   \sqrt{T_c}\, p\, \varrho_c^{p-1}}  
\left\| \left(u_{0_1}, g_{0_1}\right) - \left(u_{0_2}, g_{0_2}\right)\right\|_D
}
provided that
$c_{s, p}  \sqrt{T_c}\, p \, \varrho_c^{p-1} < 1$, 
which is satisfied for
$\varrho_c = \left(2c_{s, p}   \sqrt{T_c}\, p\right)^{-\frac{1}{p-1}}$.
For this choice of $\varrho_c$,  inequality \eqref{nn-lip-cond} becomes the Lipschitz condition \eqref{nls-hl-lipaim}. The proof of Theorem \ref{nh-ibvp-wp-h-t} is complete.

\subsection{Proof of Theorem \ref{nh-ibvp-wp-l-t}}
\noindent
\textbf{I. Existence and uniqueness.}
For $s<\frac 12$ (``rough'' data), we refine the solution space $X$ defined by  \eqref{nh-x-def-p} by intersecting it with the space $C^\alpha([0, T^*]; L_x^{p}(0, \infty))$, i.e. we replace $X$ by
\eee{\label{nh-y-def-p}
Y
=
C([0, T^*]; H_x^s(0, \infty))
\cap 
C([0, \infty); H_t^{\frac{2s+1}{4}}(0, T^*))
\cap
C^\alpha([0, T^*]; L_x^{p}(0, \infty)),
}
where the lifespan $T^*\in (0, T]$ is to be determined.
In analogy to the case $s>\frac 12$ of ``smooth'' data, existence and uniqueness of solution to the reaction-diffusion IBVP \eqref{nh-rd-ibvp}  will be established by showing that the iteration map
\eee{\nn
u\mapsto \Phi  u = \Phi_{u_0, g_0}  u
 \doteq
S\big[u_0, g_0; |u|^{p-1}u \big]
}
is a contraction in $Y$. 
Note that since for $s<\frac 12$ the forcing $f=|u|^{p-1}u$ appears in the linear estimates of Theorem \ref{nh-flh-ibvp-wp-t} under the  $L_x^{p}(0, \infty)$-norm instead of the $H_x^s(0, \infty)$-norm, the restriction $\frac{p-1}{2}\in\mathbb N$ that was present for $s>\frac 12$ is not needed anymore, i.e.  $p=2, 3, 4,\ldots.$
\vskip 3mm
\noindent
\textit{\b{Showing that  $u\mapsto \Phi  u$ is onto $Y$.}}
Let $B(0,\varrho)=\left\{u\in Y : \no{u}_Y\leqslant \varrho\right\}$ be a ball centered at $0$ with radius $\varrho = 2c_{s, p}\no{(u_0,g_0)}_D$, where $c_{s, p}>0$ is the constant appearing in Theorem \ref{nh-flh-ibvp-wp-t} and the norm $\no{\cdot}_D$ is defined by \eqref{nh-data-norm-def}.
For $u\in B(0, \varrho)$ and $\tfrac 12 - \tfrac{1}{p} < s<\tfrac 12$, using the space estimate  \eqref{nh-flh-ibvp-se-l}, the time estimate   \eqref{nh-flh-ibvp-te-l} and the $L^{p}$-estimate \eqref{nh-flh-ibvp-ca-est} with $f_j=u$, we have
\ddd{
\no{\Phi  u}_{Y} 
&=
\sup_{t\in[0, T^*]} 
\no{S\big[u_0, g_0; |u|^{p-1}u \big](t)}_{H_x^s(0,\infty)} 
+
\sup_{x\in[0, \infty)} 
\no{S\big[u_0, g_0; |u|^{p-1}u \big](x)}_{H_t^{\frac{2s+1}{4}}(0, T^*)} 
\nn\\
&\quad
+
 \no{S\big[u_0, g_0; |u|^{p-1}u \big]}_{C^\alpha([0, T^*]; L_x^{p}(0, \infty))} 
\nn\\
&\leqslant
c_{s, p}\,
\Big( 
\no{\left(u_0, g_0\right)}_D 
+
(T^*)^\alpha  \no{u}_{C^\alpha([0, T^*]; L_x^{p}(0, \infty))}^p
\Big)
\leqslant
\frac \varrho2  + c_{s, p} \left(T^*\right)^\alpha \varrho^p,
\nn
}
where we have also used the fact that  $\sqrt{T^*}<(T^*)^\alpha$ since $T^*\leqslant T <1$ and $\alpha<\tfrac 12$. Hence for $T^*\in (0, T]$ satisfying
\eee{\label{nh-lifespan-onto-y}
\frac \varrho2+c_{s, p}   \left(T^*\right)^\alpha \varrho^p \leqslant  \varrho
 \Leftrightarrow 
T^*
\leqslant
\frac{1}{\left(2c_{s, p}\right)^{\frac{p}{\alpha}}  \left\|\left(u_0, g_0\right)\right\|_D^{\frac{p-1}{\alpha}}}
}
the map $u\mapsto \Phi u$ is onto the ball $B(0, \varrho)$.
\vskip 3mm
\noindent
\textit{\b{Showing that $u\mapsto \Phi  u$ is a contraction in $Y$.}}
We shall show that 
\eee{
\label{nh-contr-ineq-y}
\left\| \Phi  u_1 - \Phi  u_2 \right\|_{Y} \leqslant \tfrac 12 \left\| u_1-u_2\right\|_Y \quad \forall u_1, u_2\in B(0, \varrho)\subset Y.
}
\begin{lemma}\label{nn-complex-l}
For  any  $v, w \in\mathbb C$ and any $p\geqslant 1$, we have
\eee{\nn
\big| |v|^{p-1}v-|w|^{p-1}w \big|
\leqslant 
c_{p} \left(|v|^{p-1}+|w|^{p-1}\right) \left|v-w\right|,
\quad
c_p= 2^{p+1}p > 0.
}
\end{lemma}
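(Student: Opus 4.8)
The plan is to regard $\Psi(z)=\abs{z}^{p-1}z$ as a map on $\mathbb C$ and to control $\Psi(v)-\Psi(w)$ by the fundamental theorem of calculus along the straight segment joining $w$ to $v$. Setting $\zeta(t)=w+t(v-w)$ for $t\in[0,1]$, so that $\zeta(0)=w$ and $\zeta(1)=v$, I would write
\[
\Psi(v)-\Psi(w)=\int_{t=0}^1 \frac{d}{dt}\,\Psi\big(\zeta(t)\big)\,dt,
\]
thereby reducing the inequality to a pointwise bound on the derivative of $\Psi$ together with an elementary estimate for the resulting integrand along the segment.

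The key step is the derivative bound. Using the Wirtinger derivatives and $\abs{z}^{p-1}=(z\bar z)^{\frac{p-1}{2}}$, a direct computation gives
\[
\partial_z \Psi=\tfrac{p+1}{2}\,\abs{z}^{p-1},
\qquad
\partial_{\bar z} \Psi=\tfrac{p-1}{2}\,(z\bar z)^{\frac{p-3}{2}}z^2,
\]
whence $\abs{\partial_z \Psi}+\abs{\partial_{\bar z}\Psi}=p\,\abs{z}^{p-1}$. Since $\zeta'(t)=v-w$ is constant, the chain rule $\frac{d}{dt}\Psi(\zeta)=\partial_z \Psi(\zeta)\,\zeta'+\partial_{\bar z}\Psi(\zeta)\,\overline{\zeta'}$ yields
\[
\Big|\tfrac{d}{dt}\,\Psi(\zeta(t))\Big|
\leqslant \big(\abs{\partial_z \Psi}+\abs{\partial_{\bar z}\Psi}\big)\,\abs{v-w}
= p\,\abs{\zeta(t)}^{p-1}\,\abs{v-w}.
\]

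It then remains to estimate the integrand. Since $\zeta(t)$ lies on the segment between $w$ and $v$, one has $\abs{\zeta(t)}\leqslant\max\{\abs{v},\abs{w}\}\leqslant\abs{v}+\abs{w}$, and because $p-1\geqslant 0$,
\[
\abs{\zeta(t)}^{p-1}\leqslant\big(\abs{v}+\abs{w}\big)^{p-1}\leqslant 2^{p-1}\big(\abs{v}^{p-1}+\abs{w}^{p-1}\big).
\]
Combining the three displays gives $\abs{\Psi(v)-\Psi(w)}\leqslant 2^{p-1}p\,(\abs{v}^{p-1}+\abs{w}^{p-1})\abs{v-w}$, which is the claimed inequality with room to spare since $2^{p-1}p\leqslant 2^{p+1}p=c_p$.

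The main obstacle is not the computation but the regularity of $\Psi$ at the origin, which matters precisely when the segment $\zeta$ passes through $0$. For $p>1$ the formulas above show $\partial_z \Psi,\partial_{\bar z}\Psi\to 0$ as $z\to 0$, so $\Psi\in C^1(\mathbb C)$ with $D\Psi(0)=0$ and the fundamental theorem of calculus is legitimate; the case $p=1$ is trivial since then $\Psi(z)=z$ and both sides reduce to multiples of $\abs{v-w}$. Alternatively, one can avoid differentiability issues entirely by an elementary route: assuming $\abs{v}\geqslant\abs{w}$ and writing $\Psi(v)-\Psi(w)=\abs{v}^{p-1}(v-w)+\big(\abs{v}^{p-1}-\abs{w}^{p-1}\big)w$, the first term is bounded by $(\abs{v}^{p-1}+\abs{w}^{p-1})\abs{v-w}$ directly, while the mean value theorem applied to $r\mapsto r^{p-1}$ together with $\big|\abs{v}-\abs{w}\big|\leqslant\abs{v-w}$ controls the second term by $(p-1)(\abs{v}^{p-1}+\abs{w}^{p-1})\abs{v-w}$ (splitting the cases $p\geqslant 2$ and $1\leqslant p<2$ to bound the intermediate value), again yielding the constant $p$.
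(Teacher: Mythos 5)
Your proposal is correct and follows essentially the same route as the paper: both parametrize the straight segment $\zeta(t)=w+t(v-w)$ and reduce the claim to the pointwise derivative bound $\big|\tfrac{d}{dt}f(\zeta(t))\big|\leqslant C\, p\,|\zeta(t)|^{p-1}|v-w|$, the only difference being that you integrate (fundamental theorem of calculus) using Wirtinger derivatives, whereas the paper applies the one-dimensional mean value theorem separately to the real and imaginary parts using Cartesian partials. Your variant is marginally tidier — it avoids the two intermediate points $\tau_1,\tau_2$, yields the sharper constant $2^{p-1}p$ (indeed $p$ itself, via $|\zeta(t)|\leqslant\max\{|v|,|w|\}$) in place of $2^{p+1}p$, and explicitly addresses the $C^1$ regularity of $z\mapsto|z|^{p-1}z$ at the origin, a point the paper's proof leaves implicit.
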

The proof of Lemma \ref{nn-complex-l} is given at the end of the section. Employing this lemma with $v=u_1$ and $w=u_2$ and then applying H\"older's inequality, we find
\eee{
\no{|u_1|^{p-1}u_1(t) -  |u_2|^{p-1}u_2(t)}_{L_x^1(0, \infty)}
\leqslant
c_p
\left(\no{u_1(t)}_{L_x^{p}(0, \infty)}^{p-1}
+
\no{u_2(t)}_{L_x^{p}(0, \infty)}^{p-1}
\right)
\no{u_1(t)-u_2(t)}_{L_x^{p}(0, \infty)}.
\nn
}
Combining this inequality with estimates \eqref{nh-flh-ibvp-se-l}, \eqref{nh-flh-ibvp-te-l} and \eqref{nh-flh-ibvp-ca-est}, we obtain
\eee{
\left\| \Phi  u_1 - \Phi  u_2 \right\|_{Y} 
\leqslant
 2^{p+2}p \, c_{s, p} \left(T^*\right)^\alpha \varrho^{p-1} \no{u_1-u_2}_{Y}.
\nn
}
Hence, for $T^*\in (0, T]$ such that
\eee{\label{nh-lifespan-contr-y}
2^{p+2}p \, c_{s, p} \left(T^*\right)^\alpha \varrho^{p-1} \leqslant \frac 12
\Leftrightarrow 
T^* 
\leqslant 
\frac{1}{\left(2^{p+2}p\right)^{\frac{1}{\alpha}} 
\left(2c_{s, p}\right)^{\frac{p}{\alpha}} \no{(u_0, g_0)}_D^{\frac{p-1}{\alpha}}}
}
the contraction inequality \eqref{nh-contr-ineq-y} is satisfied.

Overall, for lifespan $T^*$ given by
\eee{
T^*
=
\min\bigg\{T,\,   \frac{1}{\left(2^{p+2}p\right)^{\frac{1}{\alpha}} 
\left(2c_{s, p}\right)^{\frac{p}{\alpha}}  \no{(u_0, g_0)}_D^{\frac{p-1}{\alpha}}}
\bigg\}, \quad c_{s, p}>0,
\nn
}
the Banach fixed point theorem implies a unique solution $u\in B(0, \varrho)\subset Y$ for the reaction-diffusion IBVP \eqref{nh-rd-ibvp}. The proof of existence and uniqueness is complete.

\vskip 3mm
\noindent
\textbf{II. Continuity of the data-to-solution map.}
This part of the proof is entirely analogous to that of Subsection \ref{nh-nh-ibvp-wp-h-t-ss} for the case of ``smooth'' data. The proof of Theorem \ref{nh-ibvp-wp-l-t} is complete.

\begin{proof}[\textnormal{\textbf{Proof of  Lemma \ref{nn-complex-l}.}}]
Let $f(z) = |z|^{p-1}z$ with $z=x+i y$ and define  
$$
\varphi(t)=f(w+t(v-w))
=
f_1(w+t(v-w)) +i f_2(w+t(v-w))
 \doteq
\varphi_1(t) + i  \varphi_2(t),
$$
where 
$w=x_1+iy_1$,  $v=x_2+iy_2$ and $0\leqslant t\leqslant 1$.
The mean value theorem for  $\varphi_1$ and $\varphi_2$ implies
$$
|v|^{p - 1}v
-
|w|^{p - 1}w
=
f(v)-f(w)
=
\varphi(1)- \varphi(0)
=
\varphi_1'(\tau_1) +i \varphi_2'(\tau_2)
$$
for some $\tau_1, \tau_2 \in [0, 1]$. In turn, 
$\big| |v|^{p - 1}v  - |w|^{p - 1}w \big|
\leqslant
\left|\varphi'_1(\tau_1)\right|+ \left|\varphi_2'(\tau_2)\right|$
hence, it suffices to estimate $|\varphi_1'(\tau_1)|$ and  $|\varphi_2'(\tau_2)|$.
We compute $\varphi_1'(t)
=
\frac{\partial f_1}{\partial x}(w+t(v-w))\cdot \left(x_2-x_1\right)
+
\frac{\partial f_1}{\partial y}(w+t(v-w))\cdot \left(y_2-y_1\right)$
and
$
\varphi_2'(t)
=
\frac{\partial f_2}{\partial x}(w+t(v-w))\cdot \left(x_2-x_1\right)
+
\frac{\partial f_2}{\partial y}(w+t(v-w))\cdot \left(y_2-y_1\right)$,
where   $\frac{\partial f}{\partial x}
=
|z|^{p-1}
\left[
1+\frac{\left(p-1\right) x^2}{x^2 +y^2} +i  \, \frac{\left(p-1\right) xy}{x^2 +y^2}
\right]
=
\frac{\partial f_1}{\partial x}  +i \frac{\partial f_2}{\partial x}$
and
$
\frac{\partial f}{\partial y}
=
|z|^{p-1}
\left[ 
\frac{\left(p-1\right) xy}{x^2 +y^2}
+
i\, \left(1+\frac{\left(p-1\right) y^2}{x^2 +y^2}\right)
\right]
=
\frac{\partial f_1}{\partial y}  +i \frac{\partial f_2}{\partial y}.$
Thus,  for any $0\leqslant t \leqslant 1$ we have
\ddd{
|\varphi_1'(t)|
&\leqslant
\left|w+t(v-w)\right|^{p-1}
\Big| 
\left[
1+ (p-1)  
\right]  (x_2-x_1)
+
(p-1)  (x_2-x_1)
\Big|
\nn\\
&\leqslant
p \left|(1-t)w+t v\right|^{p-1} |x_2-x_1|  
\leqslant
p \left(|w|+|v|\right)^{p-1} \big( |v-w|\big)
\nn
}
and, noting that 
$\left(|w|+|v|\right)^{p-1} \leqslant 2^p \left(|w|^{p-1}+|v|^{p-1}\right)$ for $p\geqslant 1$,
we deduce
$
|\varphi_1'(t)|
\leqslant
p \cdot 2^p
\big( |w|^{p-1} +|v|^{p-1}\big) |v-w|
$
for all $t\in [0, 1]$.
Similarly,  we have
$|\varphi_2'(t)|
\leqslant
p \cdot 2^p
\big( |w|^{p-1} +|v|^{p-1}\big) |v-w|$ for all $t\in [0, 1]$.
These two estimates combined with the mean value inequality above imply  the inequality of Lemma \ref{nn-complex-l}.
\end{proof}

%
%
%
%
%
%
%
%
%
\section{Linear IBVP Estimates on the Finite Interval}
\label{nh-fi-ibvp-le-s}

The analysis of the forced linear heat equation on the finite interval can be simplified significantly by exploiting the results of Theorem \ref{nh-flh-ibvp-wp-t} for this equation on the half-line. 
In particular, let $U_0\in H_x^s(\mathbb R)$ be a whole line extension of the initial data $u_0\in H_x^s(0, \ell)$ of the  interval IBVP \eqref{nh-fi-flh-ibvp} such that $\no{U_0}_{H_x^s(\mathbb R)} \leqslant c \no{u_0}_{H_x^s(0, \ell)}$. 
For $s>\frac 12$, use a similar argument  to extend the forcing $f$ of  \eqref{nh-fi-flh-ibvp} by $F\in C([0, T]; H_x^s(\mathbb R))$ such that $\no{F}_{C([0, T];H_x^s(\mathbb R))} \leqslant c \no{f}_{C([0, T]; H_x^s(0, \ell))}$, while for $s\leqslant \frac 12$ simply extend $f$ by zero. Subsequently, restrict $U_0$ and $F$ to the half-line to obtain  initial data and forcing for the half-line IBVP \eqref{nh-flh-ibvp}.
Then, the solution $S\big[u_0, g_0, h_0; f]$ of the interval IBVP \eqref{nh-fi-flh-ibvp} can be expressed as 
\eee{\label{nh-fi-flh-ibvp-sup}
S\big[u_0, g_0, h_0; f\big] 
= 
S\big[U_0\big|_{x\in [0, \ell]}, g_0; F\big|_{x\in [0, \ell]}\big]\Big|_{x\in[0, \ell]} 
+ 
S\big[0, 0, w_0; 0\big],
}
where $S\big[U_0\big|_{x\in [0, \ell]}, g_0; F\big|_{x\in [0, \ell]}]$ is the solution of the half-line IBVP \eqref{nh-flh-ibvp} restricted on $[0, \ell]$ and $S\big[0, 0, w_0; 0\big]$ satisfies the reduced interval IBVP
\sss{\label{nh-fi-ibvp-r}
\ddd{
& u_t - u_{xx} = 0,   && x \in (0, \ell), \ t \in (0, T), 
\label{nh-fi-flh-ibvp-r-eq-nh}
\\
&u(x,0) = 0,  && x \in [0, \ell],
\label{nh-fi-flh-ibvp-r-ic-nh}
\\ 
&u(0,t) = 0, && t \in [0, T],
\\ 
&u(\ell,t) =  h_0(t) - S\big[U_0\big|_{x\in [0, \ell]}, g_0; F\big|_{x\in [0, \ell]}](\ell, t)  \doteq w_0(t), \quad && t \in [0, T].
\label{nh-fi-flh-ibvp-r-bc-nh}
}
}
Since the half-line solution $S\big[U_0\big|_{x\in [0, \ell]}, g_0; F\big|_{x\in [0, \ell]}\big]$ was estimated in the preceding sections, we only need to estimate the reduced IBVP \eqref{nh-fi-ibvp-r}.

Note that for all $0\leqslant s < \tfrac 32$, $s\neq \tfrac 12$, the boundary datum $w_0$ of IBVP \eqref{nh-fi-ibvp-r} belongs to $H_t^{\frac{2s+1}{4}}(0, T)$ due to the fact that both $h_0$ and $S\big[U_0\big|_{x\in [0, \ell]}, g_0; F\big|_{x\in [0, \ell]}]$ belong to $H_t^{\frac{2s+1}{4}}(0, T)$. In particular, combining the extension  inequalities stated above and the half-line  estimates \eqref{nh-flh-ibvp-te}, we have 
\sss{\label{nh-fi-w0}
\ddd{
\no{w_0}_{H_t^{\frac{2s+1}{4}}(0, T)}
&\leqslant
c_{s, p}
\Big(
\no{u_0}_{H_x^s(0, \ell)}
+
\no{g_0}_{H^{\frac{2s+1}{4}}_t(0, T)}
+
\no{h_0}_{H^{\frac{2s+1}{4}}_t(0, T)}
\nn\\
&\hskip 1.3cm
+
\sqrt T \prod_{j=1}^p  \no{f_j}_{C([0, T]; H_x^s(0, \ell))}
\Big),
\quad
\tfrac 12<s<\tfrac 32, 
\label{nh-fi-w0-h}
}
and
\ddd{
\no{w_0}_{H_t^{\frac{2s+1}{4}}(0, T)}
&\leqslant
c_{s, p}
\Big(
\no{u_0}_{H_x^s(0, \ell)}
+
\no{g_0}_{H^{\frac{2s+1}{4}}_t(0, T)}
+
\no{h_0}_{H^{\frac{2s+1}{4}}_t(0, T)}
\nn\\
&\hskip 1.5cm
+
\sqrt T \prod_{j=1}^p  \no{f_j}_{C^\alpha([0, T]; L_x^{p}(0, \ell))}\Big),
\quad
0\leqslant s<\tfrac 12.
\label{nh-fi-w0-l}
}
}

The analysis can be further simplified by ``embedding'' IBVP \eqref{nh-fi-ibvp-r} inside the \textit{pure IBVP}
\sss{\label{nh-fi-ibvp-rv}
\ddd{
&v_t-v_{xx} = 0, &&x\in (0,\ell), \ t\in (0,2),
\label{nh-fi-ibvp-rv-eq} \\
&v(x,0)= 0, &&x\in [0,\ell],  \label{nh-fi-ibvp-rv-ic} \\
&v(0,t) = 0, && t\in [0, 2],
\\[-.5em]
&v(\ell,t) = h(t)\in H_t^{\frac{2s+1}{4}}(\mathbb R),
\quad && 
t\in [0, 2],
\label{nh-fi-ibvp-rv-bc}
}
}
where $h$ is an extension of $w_0$ constructed analogously to the extension $g$ of Section \ref{nh-ibvp-le-s}  so that $\text{supp}(h)\subset (0, 2)$ and 
\eee{\label{nh-fi-bcdet}
\no{h}_{H_t^{\frac{2s+1}{4}}(\mathbb R)}
\leqslant 
c_s 
\no{w_0}_{H_t^{\frac{2s+1}{4}}(0, T)},
\quad
0\leqslant s < \tfrac 32, \ s\neq\tfrac 12.
}
Indeed, IBVP \eqref{nh-fi-ibvp-rv} restricted on $(0, \ell) \times (0, T)$ becomes IBVP \eqref{nh-fi-ibvp-r}.
Therefore,  IBVP \eqref{nh-fi-ibvp-r} can be estimated via IBVP \eqref{nh-fi-ibvp-rv}, whose solution is given by the UTM formula \eqref{nh-fi-flh-utm-sol-T} as  
\ddd{\label{nh-fi-ibvp-rv-sol}
v(x, t) 
&=
S\big[0, 0, h; 0\big](x, t) 
\nn\\
&=
\frac{1}{\pi}\int_{k=0}^\infty 
\frac{e^{ia^3kx+ik^2 t}}{e^{ia^3k\ell}-e^{-ia^3k\ell}}
\, k   \widehat h(k^2) dk
+ \frac{1}{\pi}\int_{k=0}^\infty 
\frac{e^{iakx-ik^2 t}}{e^{iak\ell}-e^{-iak\ell}}
\, k \widehat h(-k^2)  dk
\nn\\
&\quad
+\frac{1}{\pi}\int_{k=0}^\infty 
\frac{e^{-ia^3k x+ik^2 t}}{e^{-ia^3k\ell}-e^{ia^3k\ell}}
\, k   \widehat h(k^2)    dk
+ \frac{1}{\pi}\int_{k=0}^\infty 
\frac{e^{-iakx-ik^2 t}}{e^{-iak\ell}-e^{iak\ell}}
\, k   \widehat h(-k^2)   dk
}
with $a= e^{i\frac{\pi}{4}}$ and the contours $\p D^\pm$ depicted in Figure \ref{nh-fi-flh-dpm}. Observe that, similarly to the half-line,  we have exploited the compact support of $h$ in order to replace in formula \eqref{nh-fi-ibvp-rv-sol} the time transform  $\widetilde h_0$ by the  Fourier transform of $h$ on the whole line.

Starting from   formula \eqref{nh-fi-ibvp-rv-sol}, we shall establish the following Sobolev estimates for IBVP \eqref{nh-fi-ibvp-rv}. 

\begin{theorem}[\b{Sobolev-type estimates for the pure IBVP}]
\label{nh-fi-ibvp-rv-t}
The solution $v=S\big[0, 0, h;0\big]$ of the pure IBVP \eqref{nh-fi-ibvp-rv} given by the UTM formula \eqref{nh-fi-ibvp-rv-sol} satisfies the space and time estimates
\ddd{
\no{v}_{C([0, 2]; H_x^s(0,\ell))}
\leqslant
c_s \no{h}_{H_t^{\frac{2s+1}{4}}(\mathbb R)},
\quad && s\geqslant 0,
\label{nh-fi-ibvp-rv-se}
\\
\no{v}_{C([0,\ell]; H_t^{\frac{2s+1}{4}}(0,2))} 
\leqslant
c_s \no{h}_{H_t^{\frac{2s+1}{4}}(\mathbb R)},
\quad && s \in \mathbb R.
\label{nh-fi-ibvp-rv-te}
}
\end{theorem}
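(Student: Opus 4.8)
The plan is to run the half-line argument of Theorem~\ref{nh-ibvp-rv-t} almost verbatim, treating the two denominators in the UTM formula~\eqref{nh-fi-ibvp-rv-sol} as bounded Fourier multipliers. Write $v=v_1+v_2+v_3+v_4$ for the four integrals in~\eqref{nh-fi-ibvp-rv-sol}. Because $\operatorname{Re}(ia^3)=\operatorname{Re}(ia)=-\tfrac{1}{\sqrt2}$, the spatial exponentials $e^{ia^3kx}$ and $e^{iakx}$ of $v_1,v_2$ decay in $x$, so these two terms already have the half-line form $\int_{k=0}^\infty e^{i\gamma kx}G(k,t)\,dk$ with $\gamma\in\{a^3,a\}$ and $\gamma_I=\tfrac{1}{\sqrt2}>0$. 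The exponentials $e^{-ia^3kx}$, $e^{-iakx}$ of $v_3,v_4$ grow in $x$ but decay as $x\to\ell$; after the reflection $y=\ell-x$ (an isometry of $H_x^s(0,\ell)$) and the factorization $e^{-ia^3kx}=e^{-ia^3k\ell}e^{ia^3ky}$ they, too, become half-line integrals $\int_{k=0}^\infty e^{i\gamma ky}G(k,t)\,dk$. For each term I would then reproduce the three cases of the physical-space norm~\eqref{nls-hl-sobfrac} ($\lfloor s\rfloor=0$, $\beta\ne0$; $\beta=0$; $\lfloor s\rfloor\ge1$, $\beta\ne0$), using Lemma~\ref{nh-gamma-l} to replace each complex exponential difference by a real one and then Lemma~\ref{nh-tlap-l} to recognize the resulting $k$-integrals as $L^2$-bounded Laplace transforms, exactly as for Theorem~\ref{nh-ibvp-rv-t}. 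Restricting the $x$-integration from $(0,\infty)$ to $(0,\ell)$ only decreases the norm, so those computations transfer unchanged.

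The one new ingredient is the control of the multipliers created by the denominators. For real $k>0$ the numbers $ia^3k\ell$ and $iak\ell$ lie strictly off the real axis, so neither denominator vanishes; and applying the reverse triangle inequality to the moduli (again using $\operatorname{Re}(ia^3)=\operatorname{Re}(ia)=-\tfrac1{\sqrt2}$) gives
\[
\left|e^{ia^3k\ell}-e^{-ia^3k\ell}\right|,\ \left|e^{iak\ell}-e^{-iak\ell}\right|\ \ge\ 2\sinh\!\left(\tfrac{k\ell}{\sqrt2}\right),\qquad k\ge0.
\]
Thus the numerator factor $k$ cancels the simple zero of each denominator at $k=0$ and the multipliers are continuous and bounded on $[0,\infty)$: for $v_1,v_2$ the multiplier is dominated by $k/\bigl(2\sinh(k\ell/\sqrt2)\bigr)$, which is bounded and decays exponentially, while for $v_3,v_4$ the reflection leaves the multiplier $k/(1-e^{2ia^3k\ell})$ (resp.\ $k/(1-e^{2iak\ell})$), bounded near $k=0$ and growing only linearly as $k\to\infty$, so it reproduces exactly the half-line weight $k\,\bigl|\widehat h(\pm k^2)\bigr|$. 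On the high-frequency range $k\ge1$ the substitution $\tau=k^2$ then reduces every term to the weighted integrals of the proof of Theorem~\ref{nh-ibvp-rv-t}, finished by the elementary comparison of the relevant power of $\tau$ with $(1+\tau^2)^{\frac{2s+1}{4}}$ as in~\eqref{nh-v1-se-1}.

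The sole genuine departure from the half-line occurs at low frequency $0\le k\le1$, where the finite-interval multipliers tend to nonzero constants instead of vanishing as they do on the half-line. This is exactly where the compact support $\operatorname{supp}(h)\subset(0,2)$ is used: by Cauchy--Schwarz on the bounded support,
\[
\sup_{0\le k\le1}\bigl|\widehat h(\pm k^2)\bigr|\ \le\ \|h\|_{L_t^1(\mathbb R)}\ \le\ \sqrt2\,\|h\|_{L_t^2(\mathbb R)}\ \le\ \sqrt2\,\|h\|_{H_t^{\frac{2s+1}{4}}(\mathbb R)},
\]
so the low-frequency contribution $\int_0^1(\cdots)\,k^{2\beta}\,dk$ converges (since $\beta\ge0$) and is bounded by $\|h\|_{H_t^{\frac{2s+1}{4}}(\mathbb R)}^2$. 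Adding the low- and high-frequency bounds over the finitely many pieces of~\eqref{nls-hl-sobfrac} yields the space estimate~\eqref{nh-fi-ibvp-rv-se}. I expect this low-frequency point to be the main obstacle; it is precisely the reason the reduced problem~\eqref{nh-fi-ibvp-r} was embedded into the pure IBVP~\eqref{nh-fi-ibvp-rv} with a compactly supported boundary datum.

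Finally, the time estimate~\eqref{nh-fi-ibvp-rv-te} needs no new idea. The change of variable $k=\sqrt\tau$ turns~\eqref{nh-fi-ibvp-rv-sol} into Fourier-in-$t$ integrals of the form $\int e^{\pm i\tau t}\,(\cdots)\,\widehat h(\pm\tau)\,d\tau$; for $x\in[0,\ell]$ the spatial exponentials and the bounded multipliers above are uniformly bounded, so $\|v(x)\|_{H_t^{\frac{2s+1}{4}}(0,2)}^2$ is dominated directly by $\int_{\mathbb R}(1+\tau^2)^{\frac{2s+1}{4}}\bigl|\widehat h(\tau)\bigr|^2\,d\tau=\|h\|_{H_t^{\frac{2s+1}{4}}(\mathbb R)}^2$, as in the half-line time estimate. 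Continuity of $v$ in $x$ (respectively $t$) then follows from the dominated convergence theorem exactly as in Theorem~\ref{nh-ibvp-rv-t}.
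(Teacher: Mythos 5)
Your treatment of the space estimate \eqref{nh-fi-ibvp-rv-se} is essentially sound and, in fact, close to the paper's own proof: the paper likewise reduces everything to half-line-type estimates via Lemma \ref{nh-gamma-l} and the Laplace-transform Lemma \ref{nh-tlap-l}, handles the exponentials $e^{-ia^3kx}$, $e^{-iakx}$ by the reflection $x\mapsto \ell-x$ (its term $J_{22}$), and uses the same high-frequency lower bound $\left|e^{ia^3k\ell}-e^{-ia^3k\ell}\right|\geqslant e^{\frac{\sqrt 2}{2}k\ell}-e^{-\frac{\sqrt 2}{2}k\ell}$. Your low-frequency device $\sup_{0\leqslant k\leqslant 1}\big|\widehat h(k^2)\big|\leqslant \|h\|_{L_t^1(\mathbb R)}\lesssim \|h\|_{L_t^2(\mathbb R)}$ is a valid alternative to the paper's Cauchy--Schwarz/Plancherel argument there (though the compact support of $h$ is really needed even earlier, to have a genuine Fourier transform $\widehat h$ in formula \eqref{nh-fi-ibvp-rv-sol} at all).

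There is, however, a genuine gap in your time estimate \eqref{nh-fi-ibvp-rv-te}, and it occurs exactly where the paper's grouping of the four terms into two (first with third, second with fourth) is essential. After the substitution $k=\sqrt\tau$, the Jacobian $dk=d\tau/(2\sqrt\tau)$ absorbs the factor $k$ that made your multipliers bounded: the temporal Fourier transform of your individual term $v_1$ is
\[
\widehat{v_1}(x,\tau)\simeq\frac{e^{ia^3\sqrt\tau x}}{e^{ia^3\sqrt\tau\,\ell}-e^{-ia^3\sqrt\tau\,\ell}}\,\widehat h(\tau)\,\chi_{\{\tau>0\}},
\]
whose multiplier behaves like $\left(2ia^3\ell\sqrt\tau\right)^{-1}$ as $\tau\to0^+$, hence is \emph{not} uniformly bounded; the same happens for $v_2,v_3,v_4$. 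This is not a removable technicality: generically $\widehat h(0)=\int h\,dt\neq 0$, so $\int_0^1\big|\widehat{v_1}(x,\tau)\big|^2\,d\tau\simeq \big|\widehat h(0)\big|^2\int_0^1 \tau^{-1}d\tau=\infty$, i.e.\ the individual terms fail to lie in $L_t^2(\mathbb R)$, and no termwise bound of the kind you assert can hold. The cure is precisely the paper's pairing: in the combined ratio
\[
\frac{e^{ia^3\sqrt\tau x}-e^{-ia^3\sqrt\tau x}}{e^{ia^3\sqrt\tau\,\ell}-e^{-ia^3\sqrt\tau\,\ell}}
\]
numerator and denominator vanish to the same (first) order at $\tau=0$, so the ratio stays bounded (it tends to $x/\ell$), after which your final paragraph goes through verbatim. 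Your space-estimate computations survive this objection only because there the integrals are taken in the variable $k$ with measure $dk$, where your multipliers genuinely are bounded; the time estimate must be carried out on the grouped terms.
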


\begin{proof}[\textnormal{\textbf{Proof of Theorem \ref{nh-fi-ibvp-rv-t}.}}]
The main ideas behind the proof are the same with those that form the backbone of the proof in the case of the half-line  (see Theorem \ref{nh-ibvp-rv-t}). However, extra care is now needed when handling the exponential differences in the denominators of the solution formula \eqref{nh-fi-ibvp-rv-sol}.

We begin with the space estimate \eqref{nh-fi-ibvp-rv-se}. Grouping together the first with the third and the second with the fourth term of \eqref{nh-fi-ibvp-rv-sol}, we write $v=v_1+v_2$ where
\sss{
\ddd{
\nn
v_1(x, t)
&=
\int_{k=0}^\infty 
\frac{e^{i\gamma_1kx}-e^{-i\gamma_1k x}}{e^{i\gamma_1k\ell}-e^{-i\gamma_1k\ell}}
\, G_1(k, t) dk,
&&
\gamma_1 = a^3, \
G_1(k, t) 
= 
\frac{1}{\pi} e^{ik^2 t} k   \widehat h(k^2),
\\
v_2(x, t)
&=
\int_{k=0}^\infty 
\frac{e^{i\gamma_2 kx}-e^{-i\gamma_2kx}}{e^{i\gamma_2k\ell}-e^{-i\gamma_2k\ell}}
\, G_2(k, t)  dk,
\quad
&&\gamma_2 = a, \
G_2(k, t) 
= 
\frac{1}{\pi} e^{-ik^2 t} k   \widehat h(-k^2).
\nn
}
}
As the estimation of $v_1$ and $v_2$ is analogous, we only provide the details for $v_1$.
We employ the physical space definition of the $H_x^s(0,\ell)$-norm:
\eee{
\label{nh-fi-frac-sob-def}
\no{v_1(t)}_{H_x^s(0, \ell)}
=
\sum_{j=0}^{\left\lfloor s \right\rfloor}
\no{\p_x^j v_1(t)}_{L_x^2(0, \ell)}
+
\big\| \p_x^{\left\lfloor s \right\rfloor}v_1(t)\big\|_\beta,\quad s=\left\lfloor s \right\rfloor +\beta\geqslant 0, \ 0\leqslant \beta < 1, 
}
where $\lfloor \cdot \rfloor$ denotes the floor function
and the fractional norm $\no{\cdot}_\beta$ is defined by 
\eee{
\no{v_1(t)}_{\beta}
=
\left(\int_{x=0}^\ell \int_{z=0}^{\ell-x} \dfrac{\left| v_1(x+z, t)- v_1(x, t)\right|^2}{z^{1+2\beta}}\, dzdx\right)^{\frac 12}, \quad 0 < \beta < 1.
\nn
}
\noindent
\textit{\b{(i) The case $\left\lfloor s \right\rfloor = 0$.}} 
Since $s=\beta\in [0, 1)$, we only need to estimate $\no{v_1(t)}_\beta$ and $\no{v_1(t)}_{L_x^2(0, \ell)}$.
For the fractional norm $\no{v_1(t)}_\beta$, we write
\eee{\label{nh-fi-j1-j2-split}
v_1(x, t)
=
J_1(x, t) + J_2(x, t)
}
with
$$
J_1(x, t)
=
\int_{k=0}^1
\frac{e^{i\gamma_1kx}-e^{-i\gamma_1k x}}{e^{i\gamma_1k\ell}-e^{-i\gamma_1k\ell}}
\, G_1(k, t) dk,
\quad
J_2(x, t)
=
\int_{k=1}^\infty 
\frac{e^{i\gamma_1kx}-e^{-i\gamma_1k x}}{e^{i\gamma_1k\ell}-e^{-i\gamma_1k\ell}}
\, G_1(k, t)  dk.
$$

For $J_1$, we have
\ddd{
\no{J_1(t)}_\beta
&=
\left( \int_{z=0}^\ell \int_{x=0}^{\ell-z}
\frac{1}{z^{1+2\beta}}
\left|
\int_{k=0}^1 
\frac{\left(e^{i\gamma_1k z}-1\right)
\left[
e^{i\gamma_1k (x+\ell)}
+
e^{-i\gamma_1k (x+z-\ell)} 
\right]}{e^{2i\gamma_1k\ell}-1}
\, G_1(k,t) dk
 \right|^2
 dxdz \right)^{\frac 12}
\nonumber\\
&\leqslant
\int_{k=0}^1
\left(
\int_{z=0}^\ell \int_{x=0}^{\ell-z}  \frac{1}{z^{1+2\beta}}\,
\frac{\left|1-e^{i\gamma_1k z}\right|^2}{\big|1-e^{2i\gamma_1k\ell}\big|^2}
 \left|G_1(k ,t)\right|^2 dx dz \right)^{\frac 12}
dk 
\nn
}
after noting that $x+\ell\geqslant 0$ and $x+z-\ell = x-(\ell-z) \leqslant 0$ and applying Minkowski's integral inequality in $k$.
Hence, using Lemma \ref{nh-gamma-l}, integrating in $x$ and employing estimate \eqref{nh-gamma-est}, we obtain
\eee{
\no{J_1(t)}_\beta
\lesssim
\sqrt \ell 
\int_{k=0}^1
\frac{\left|G_1(k ,t)\right|}{1-e^{-\sqrt 2 k\ell}} 
\left(
\int_{z=0}^\ell 
\frac{\big(1-e^{-\frac{\sqrt 2}{2} k z}\big)^2}{z^{1+2\beta}}  \, dz \right)^{\frac 12}
dk
\lesssim 
\int_{k=0}^1
\frac{k^{\frac 12+\beta}}{1-e^{-\sqrt 2 k\ell}} \, k^{\frac 12}  
\big|\widehat h(k^2)\big|
dk.
\nn
}
Then, applying the Cauchy-Schwarz inequality and noting that
$
\int_{k=0}^1
\frac{k^{1+2\beta}}{\left(1-e^{-\sqrt 2 k\ell}\right)^2} \, dk
<\infty
$
since the singularity at $k=0$ is removable and the domain of integration is compact, we deduce
\eee{\label{nh-fi-j1-est}
\no{J_1(t)}_\beta
\lesssim
\no{h}_{L_t^2(\mathbb R)}
\leqslant
\no{h}_{H_t^{\frac{2\beta+1}{4}}(\mathbb R)},
\quad 0 < \beta < 1.
}

Regarding $J_2$, writing
$
J_2 
=
J_{21} + J_{22} 
$
with
\sss{
\ddd{
J_{21}(x, t) 
&=
\int_{k=1}^\infty 
e^{ia^3kx} K_1(k, t)  dk,
\quad
&&K_1(k, t) = \frac{G_1(k, t)}{e^{ia^3k\ell}-e^{-ia^3k\ell}},
\nn\\
J_{22}(x, t) 
&=
\int_{k=1}^\infty 
e^{-ia^3k(x-\ell)}  K_2(k, t) dk,
\quad
&&K_2(k, t) = \frac{G_1(k, t)}{1-e^{2ia^3k\ell}}
\nn
}
}
and employing once again Lemma \ref{nh-gamma-l}, we infer
\eee{
\no{J_{21}(t)}_\beta^2
\lesssim
\int_{z=0}^\ell
\frac{1}{z^{1+2\beta}} 
\no{
\int_{k=1}^\infty 
e^{-\frac{\sqrt 2}{2}kx}
\big(1-e^{-\frac{\sqrt 2}{2}kz} \big) \left|K_1(k, t)\right|  dk
}_{L_x^2(0, \infty)}^2 dz.
\nn
}
Hence, combining Lemma \ref{nh-tlap-l} for the $L^2$-boundedness of the Laplace transform with estimate \eqref{nh-gamma-est}, we find
\eee{\label{fi-v1-bound-L3}
\no{J_{21}(t)}_\beta^2
\lesssim
\int_{k=1}^\infty \frac{k^2 \big|\widehat h(k^2)\big|^2 k^{2\beta}}{\left|e^{ia^3k\ell}-e^{-ia^3k\ell}\right|^2} dk
\lesssim
\int_{k=1}^\infty \frac{k^2 \big|\widehat h(k^2)\big|^2 k^{2\beta}}{\big(e^{\frac{\sqrt 2}{2} k\ell}-e^{-\frac{\sqrt 2}{2} k\ell}\big)^2} \, dk,
}
with the last inequality due to the fact that
$
\big|e^{ia^3k\ell}-e^{-ia^3k\ell}\big|
\geqslant
e^{\frac{\sqrt 2}{2} k\ell}-e^{-\frac{\sqrt 2}{2} k\ell}$
for $k\geqslant 0$.

Next, let 
$
\psi(k)
=
e^{\frac{\sqrt 2}{2} k\ell}-e^{-\frac{\sqrt 2}{2} k\ell}
$
and note that $\psi(k)>0$ for $k\geqslant 1$ (importantly, $\psi(k)=0$ only for $k=0$, i.e. the integrand of \eqref{fi-v1-bound-L3} is non-singular). Moreover, observe that $\psi$ is infinitely differentiable and, in particular,   
$\psi$ is increasing on $[1, \infty)$. Hence, 
$
0 <  e^{\frac{\sqrt 2}{2} \ell}-e^{-\frac{\sqrt 2}{2} \ell} 
\leqslant
e^{\frac{\sqrt 2}{2} k\ell}-e^{-\frac{\sqrt 2}{2} k\ell}$ for $k\geqslant 1$.
Thus, back to \eqref{fi-v1-bound-L3}, we have
\eee{\label{nh-fi-j21-est}
\no{J_{21}(t)}_\beta^2
\lesssim
\int_{k=1}^\infty \frac{k^2 \big|\widehat h(k^2)\big|^2 k^{2\beta}}{\left(e^{\frac{\sqrt 2}{2} \ell}-e^{-\frac{\sqrt 2}{2} \ell}\right)^2} \, dk
\simeq
\int_{k=1}^\infty k^{1+2\beta} \big|\widehat h(k^2)\big|^2 k dk
\lesssim
\no{h}_{H_{t}^{\frac{2\beta+1}{4}}(\mathbb R)}^2.
}

Concerning $J_{22}$, we note that $\no{J_{22}(t)}_\beta = \big\|\widetilde J_{22}(t)\big\|_{\beta}$
where
$
\widetilde J_{22}(x, t) 
\doteq
J_{22}(\ell-x, t).
$
Therefore, 
\eee{
\no{J_{22}(t)}_\beta^2
=
\big\|\widetilde J_{22}(t)\big\|_{\beta}^2
\nn\\
\lesssim
\int_{x=0}^\ell\int_{z=0}^{\ell-x} \frac{1}{z^{1+2\beta}} 
\left(
\int_{k=1}^\infty 
e^{-\frac{\sqrt 2}{2}k x}
\big|e^{ia^3kz}-1\big| \left|K_2(k, t)\right|  dk
\right)^2
dzdx
\nn
}
and employing once again Lemma \ref{nh-gamma-l} we infer
\eee{ 
\no{J_{22}(t)}_\beta^2
\lesssim
\int_{z=0}^\ell
\frac{1}{z^{1+2\beta}} 
\no{
\int_{k=1}^\infty 
e^{-\frac{\sqrt 2}{2}k x}
\big(1-e^{-\frac{\sqrt 2}{2}kz} \big) \left|K_2(k, t)\right|  dk}_{L_x^2(0, \infty)}^2 dz.
\nn
}
The Laplace transform Lemma \ref{nh-tlap-l}, estimate \eqref{nh-gamma-est}  and the fact that $\big|1-e^{2ia^3k\ell}\big| \geqslant 1 - e^{-\sqrt 2 k\ell}$ for all $k\geqslant 0$ imply
\eee{\nn
\no{J_{22}(t)}_\beta^2
\lesssim
\int_{z=0}^{\frac{\sqrt 2}{2} \ell}  \frac{1}{z^{1+2\beta}} \no{Q_{z,t}}^2_{L^2_{k}(0,\infty)}dz
\lesssim
\int_{k=1}^\infty 
\frac{k^2 \big|\widehat h(k^2)\big|^2}{\big(1 - e^{-\sqrt 2 k\ell}\big)^2}
\, k^{2\beta} dk.
}
Similarly to the argument used earlier for $J_{21}$, for $k\geqslant 1$ we have
$0 <  1-e^{- \sqrt 2  \ell}
\leqslant
1-e^{- \sqrt 2  k\ell}$.
Hence, 
\eee{\label{nh-fi-j22-est}
\no{J_{22}(t)}_\beta^2
\lesssim
\int_{k=1}^\infty 
\frac{k^2 \big|\widehat h(k^2)\big|^2}{\big(1 - e^{-\sqrt 2\ell}\big)^2}
\, k^{2\beta} dk
\leqslant
\int_{k=1}^\infty k^{1+2\beta} \big|\widehat h(k^2)\big|^2 k dk\lesssim
\no{h}_{H_{t}^{\frac{2\beta+1}{4}}(\mathbb R)}^2.
}

Overall, combining estimates \eqref{nh-fi-j1-est}, \eqref{nh-fi-j21-est} and \eqref{nh-fi-j22-est} with the writing \eqref{nh-fi-j1-j2-split}, we find
\eee{\label{nh-fi-v1-frac}
\no{v_1(t)}_{\beta}
\lesssim
\no{h}_{H_t^{\frac{2s+1}{4}}(\mathbb R)},
\quad s=\beta\in (0, 1).
}

The norm $\no{v_1(t)}_{L_x^2(0, \ell)}$ will also be estimated using the splitting  \eqref{nh-fi-j1-j2-split} .
In particular, for $J_1$ we employ Lemma \ref{nh-gamma-l} to infer
\eee{\label{nh-fi-j1-l2-0}
\no{J_1(t)}_{L_x^2(0,\ell)}^2
\lesssim
\int_{x=0}^\ell
\left(
\int_{k=0}^1 
\frac{e^{\frac{\sqrt 2}{2} kx}-e^{-\frac{\sqrt 2}{2}k x}}{\left|e^{ia^3k\ell}-e^{-ia^3k\ell}\right|}
\, k \big|\widehat h(k^2)\big|  dk
\right)^2
dx.
}
The ratio of exponentials involved in the $k$-integral is bounded by $1$ for all $k\geqslant 0$. Thus, applying also Cauchy-Schwarz in $k$, we obtain
\eee{
\no{J_1(t)}_{L_x^2(0,\ell)}^2
\lesssim
\int_{x=0}^\ell
\left(
\int_{\tau=0}^1
\big|\widehat h(\tau)\big|^2  d\tau
\right)
dx
\lesssim
\no{h}_{H_t^{\frac{2s+1}{4}}(\mathbb R)}^2,
\quad s\geqslant -\tfrac 12.
\label{nh-fi-j1-l2}
}
Concerning $J_2$, similarly to \eqref{nh-fi-j1-l2-0} we have
$$
\no{J_2(t)}_{L_x^2(0,\ell)}^2
\lesssim
\int_{x=0}^\ell
\left(
\frac{1}{\pi}\int_{k=1}^\infty 
\frac{e^{\frac{\sqrt 2}{2} kx}-e^{-\frac{\sqrt 2}{2}k x}}{\left|e^{ia^3k\ell}-e^{-ia^3k\ell}\right|}
\, k \big|\widehat h(k^2)\big|  dk
\right)^2
dx.
$$
Note that
$
\big|e^{ia^3k\ell}-e^{-ia^3k\ell}\big|
\geqslant
e^{\frac{\sqrt 2}{2} k\ell}-e^{-\frac{\sqrt 2}{2} k\ell}
$
and, furthermore,
\eee{\label{nh-fi-exp-bound-2}
\frac{1}{e^{\frac{\sqrt 2}{2} k\ell}-e^{-\frac{\sqrt 2}{2} k\ell}}
\leqslant
\frac{e^{-\frac{\sqrt 2}{2} k\ell}}{1-e^{-\sqrt 2 \ell}}, \quad k\geqslant 1.
}
Using this bound and the fact that $e^{-\frac{\sqrt 2}{2} k \ell} < 1$ for all $k\geqslant 1$, we find 
\ddd{ 
\no{J_2(t)}_{L_x^2(0,\ell)}^2
&\lesssim
\int_{x=0}^\ell
\left(
\int_{k=1}^\infty 
\left(e^{\frac{\sqrt 2}{2} kx}-e^{-\frac{\sqrt 2}{2}k x}\right)
e^{-\frac{\sqrt 2}{2}k \ell}
\, k \big|\widehat h(k^2)\big|  dk
\right)^2
dx
\nn\\
&\lesssim
\int_{x=0}^{\frac{\sqrt 2}{2} \ell}
\left(
\int_{k=1}^\infty e^{-k x} \, k \big|\widehat h(k^2)\big|  dk
\right)^2
dx
\leqslant
\int_{x=0}^\infty
\left(
\int_{k=1}^\infty e^{-k x} \, k \big|\widehat h(k^2)\big|  dk
\right)^2
dx.
\nn
}
Therefore,  by the Laplace  transform Lemma \ref{nh-tlap-l} we get
\eee{
\no{J_2(t)}_{L_x^2(0,\ell)}^2
\lesssim
\int_{k=1}^\infty k^2 \big|\widehat h(k^2)\big|^2  dk
=
\int_{\tau=1}^\infty \tau^{\frac 12} \big|\widehat h(\tau)\big|^2  d\tau
\leqslant
\no{h}_{H_t^{\frac{2s+1}{4}}(\mathbb R)}^2, \quad s\geqslant 0.
\label{nh-fi-j2-l2}
}
Combining \eqref{nh-fi-j1-l2} and \eqref{nh-fi-j2-l2}, we obtain
\eee{\label{nh-fi-v1-l2}
\no{v_1(t)}_{L_x^2(0,\ell)}
\lesssim
\no{h}_{H_t^{\frac{2s+1}{4}}(\mathbb R)}, \quad s\geqslant 0.
}
%
\noindent
\textit{\b{(ii) The case $\left\lfloor s \right\rfloor>0$.}}
Now $s=\left\lfloor s \right\rfloor+\beta$ with $\left\lfloor s \right\rfloor\in \mathbb N\setminus\{0\}$ and $\beta\in[0, 1)$. Thus, according to definition \eqref{nh-fi-frac-sob-def} we need to estimate the fractional norm $\big\|\p_x^{\left\lfloor s \right\rfloor} v_1(t)\big\|_{\beta}$ and also the $L^2$-norm $\big\|\p_x^j v_1(t)\big\|_{L_x^2(0, \ell)}$  for all integers $0\leqslant j \leqslant \left\lfloor s \right\rfloor$ . Both of those norms can be handled in exactly the same way as the norms $\no{v_1(t)}_\beta$ and $\no{v_1(t)}_{L_x^2(0, \ell)}$ that were estimated in case (i) above, eventually yielding the space estimate \eqref{nh-fi-ibvp-rv-se} for $v_1$ for all $s\geqslant 0$. As noted earlier, the estimation of $v_2$ is similar.

\vskip 3mm

Concerning the time estimate \eqref{nh-fi-ibvp-rv-te}, making 
the change of variable $k=\sqrt{\tau}$ in  formula \eqref{nh-fi-ibvp-rv-sol} we infer that  the temporal Fourier transform of $v$ is given by
\eee{
\widehat v(x, \tau)
\simeq
\left\{
\def\arraystretch{2.2}
\begin{array}{ll}
\dfrac{e^{ia^3\sqrt \tau x}-e^{-ia^3\sqrt \tau x}}{e^{ia^3\sqrt \tau\ell}-e^{-ia^3\sqrt \tau\ell}}\, \widehat h(\tau), & \tau\geqslant 0, 
\\
\dfrac{e^{ia\sqrt{-\tau} x}-e^{-ia\sqrt{-\tau} x}}{e^{ia\sqrt{-\tau}\ell}-e^{-ia\sqrt{-\tau}\ell}}\,   \widehat h(\tau), & \tau < 0.
\end{array}
\right.
\nn
}
Hence, by the definition of the $H_t^{\frac{2s+1}{4}}(\mathbb R)$-norm it follows that
\ddd{
\no{v(x)}_{H_t^{\frac{2s+1}{4}}(\mathbb R)}^2
&\lesssim
\int_{\tau=0}^\infty 
\left(1+\tau^2\right)^{\frac{2s+1}{4}}
\left|
\frac{e^{ia^3\sqrt \tau x}-e^{-ia^3\sqrt \tau x}}{e^{ia^3\sqrt \tau\ell}-e^{-ia^3\sqrt \tau\ell}}\, \widehat h(\tau)
\right|^2 d\tau
\nn\\
&\quad
+
\int_{\tau=-\infty}^0 
\left(1+\tau^2\right)^{\frac{2s+1}{4}}
\left|
\frac{e^{ia\sqrt{-\tau} x}-e^{-ia\sqrt{-\tau} x}}{e^{ia\sqrt{-\tau}\ell}-e^{-ia\sqrt{-\tau}\ell}}\,   \widehat h(\tau)
\right|^2 d\tau.
\nn
}
Lemma \ref{nh-gamma-l} and the fact that $k\geqslant 0$ imply that the ratios of exponentials in the above integrals are bounded by $1$. Therefore, 
$$
\no{v(x)}_{H_t^{\frac{2s+1}{4}}(\mathbb R)}^2
\lesssim
\int_{\tau=0}^\infty 
\left(1+\tau^2\right)^{\frac{2s+1}{4}}
\big|\widehat h(\tau)\big|^2 d\tau
+
\int_{\tau=-\infty}^0 
\left(1+\tau^2\right)^{\frac{2s+1}{4}}
\big|\widehat h(\tau)\big|^2 d\tau
=
\no{h}_{H_t^{\frac{2s+1}{4}}(\mathbb R)}^2,
$$
which implies the time estimate \eqref{nh-fi-ibvp-rv-te}   since $H_t^{\frac{2s+1}{4}}(0, T)$ is the restriction of $H_t^{\frac{2s+1}{4}}(\mathbb R)$  on $(0, T)$.

As for the half-line, continuity in $t$ and $x$ follows along the lines of Theorem 5 of \cite{fhm2017}.
\end{proof}

We complete the analysis of IBVP \eqref{nh-fi-ibvp-rv} with the estimation in  $C^\alpha([0, T]; L_x^{p}(0, \ell))$.
\begin{theorem}[\b{$C^\alpha([0, T]; L_x^{p}(0, \ell))$-estimate for the pure IBVP}]
\label{nh-fi-ibvp-rv-ca-t}
If $\frac 12-\frac{1}{p} < s < \frac 12$, then the solution $v = S\big[0, 0, h; 0\big]$ of the pure IBVP \eqref{nh-fi-ibvp-rv} given by the UTM formula \eqref{nh-fi-ibvp-rv-sol} admits the estimate
\eee{\label{nh-fi-ibvp-rv-ca-est-0}
\no{v}_{C^\alpha([0, T]; L_x^{p}(0, \ell))}
\lesssim 
T^\alpha  \no{h}_{H_t^{\frac{2s+1}{4}}(\mathbb R)}.
}
\end{theorem}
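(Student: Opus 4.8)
The plan is to mirror the half-line argument of Theorem \ref{nh-ibvp-rv-ca-t}, importing the finite-interval exponential bookkeeping of Theorem \ref{nh-fi-ibvp-rv-t}. The first thing to record is that, by definition \eqref{nh-fi-cbeta-def}, the target norm is the \emph{weighted} supremum
$$
\no{v}_{C^\alpha([0, T]; L_x^p(0, \ell))}
=
\sup_{t\in[0, T]} t^\alpha \no{v(t)}_{L_x^p(0, \ell)},
\qquad \alpha=\tfrac1p\big(\tfrac12-b\big)>0,
$$
so the factor $T^\alpha$ on the right of \eqref{nh-fi-ibvp-rv-ca-est-0} is produced precisely by the temporal weight $t^\alpha$, not by the spatial estimate. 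Accordingly, the whole argument rests on the $t$-\emph{uniform} spatial bound
$$
\no{v(t)}_{L_x^p(0, \ell)} \lesssim \no{h}_{H_t^{\frac{2s+1}{4}}(\mathbb R)}
\qquad\text{for all } t\in[0,2],
$$
whose right-hand side carries no $t$-dependence. Granting this, I would not stop there: multiplying by $t^\alpha$, using $t^\alpha\le T^\alpha$ for $t\le T$ (valid since $\alpha>0$), and taking the supremum over $t\in[0,T]$ gives exactly \eqref{nh-fi-ibvp-rv-ca-est-0}. Writing out this last step explicitly is what upgrades the uniform (i.e.\ $C^0$-in-time) bound to the claimed $C^\alpha$ estimate with its $T^\alpha$ factor.

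For the uniform spatial bound I would decompose $v=v_1+v_2$ exactly as in Theorem \ref{nh-fi-ibvp-rv-t}, treating $v_1$ (with $\gamma_1=a^3$) in detail since $v_2$ (with $\gamma_2=a$) is handled identically. Applying Minkowski's integral inequality in $k$ to pull the $L_x^p(0,\ell)$-norm inside the $k$-integral, the task reduces to controlling, for each fixed $k$, the quantity
$$
k\,\big|\widehat h(k^2)\big|\;\Big\|\tfrac{e^{i\gamma_1 kx}-e^{-i\gamma_1 kx}}{e^{i\gamma_1 k\ell}-e^{-i\gamma_1 k\ell}}\Big\|_{L_x^p(0,\ell)}.
$$
First I would split the $k$-integral into $[0,1]$ and $[1,\infty)$. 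On $[0,1]$ the quotient has a removable singularity at $k=0$ and the domain is compact, so the kernel is bounded there; on $[1,\infty)$ I would use the denominator lower bound $\big|e^{i\gamma_1 k\ell}-e^{-i\gamma_1 k\ell}\big|\ge e^{\frac{\sqrt2}{2}k\ell}-e^{-\frac{\sqrt2}{2}k\ell}$ together with Lemma \ref{nh-gamma-l} to control the oscillatory numerator difference by its exponential envelope $e^{\frac{\sqrt2}{2}kx}$. Since $x\le\ell$, the growing exponential in the numerator is dominated by the denominator, and integrating in $x\in(0,\ell)$ shows the $L_x^p(0,\ell)$-norm of the kernel behaves like $k^{-1/p}$, exactly as the half-line kernel $e^{i\gamma_1 kx}$ did.

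With these kernel bounds the change of variable $k=\sqrt\tau$ converts the $k$-integral into $\int_0^\infty \tau^{-\frac1{2p}}|\widehat h(\tau)|\,d\tau$ (up to the bounded denominator factor), and Cauchy--Schwarz in $\tau$ factors this as
$$
\Big(\int_0^\infty \tau^{-\frac1p}\big(1+\tau^2\big)^{-\frac{2s+1}{4}}\,d\tau\Big)^{\frac12}\,\no{h}_{H_t^{\frac{2s+1}{4}}(\mathbb R)}.
$$
The weight integral converges at $\tau=0$ because $p>1$ and at $\tau=\infty$ precisely when $\tfrac1p+\tfrac{2s+1}{2}>1$, i.e.\ $s>\tfrac12-\tfrac1p$, which is the hypothesis; the upper restriction $s<\tfrac12$ enters only through the definition of $\alpha$ and is not used here. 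The hard part, and the only genuinely new point relative to the half-line, is verifying that the finite-interval kernel $\tfrac{e^{i\gamma_1 kx}-e^{-i\gamma_1 kx}}{e^{i\gamma_1 k\ell}-e^{-i\gamma_1 k\ell}}$ has $L_x^p(0,\ell)$-norm comparable to $k^{-1/p}$ uniformly in $k\ge1$, so that no spurious exponential growth survives; this is exactly where the denominator lower bound and the monotonicity on $[1,\infty)$ of $\psi(k)=e^{\frac{\sqrt2}{2}k\ell}-e^{-\frac{\sqrt2}{2}k\ell}$, both already used in Theorem \ref{nh-fi-ibvp-rv-t}, are essential. Finally, continuity of $t\mapsto v(t)$ as an $L_x^p(0,\ell)$-valued map follows by the dominated convergence theorem applied to the integrals defining $v_1,v_2$, as in Theorem \ref{nh-ibvp-rv-ca-t}.
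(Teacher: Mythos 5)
Your proposal is correct and follows essentially the same route as the paper: the decomposition $v=v_1+v_2$, Minkowski's inequality in $k$, the low/high frequency split at $k=1$ with the denominator bound \eqref{nh-fi-exp-bound-2} and monotonicity of $\psi(k)=e^{\frac{\sqrt 2}{2}k\ell}-e^{-\frac{\sqrt 2}{2}k\ell}$ yielding the $k^{-1/p}$ kernel bound, the substitution $k=\sqrt\tau$ with Cauchy--Schwarz producing the weight integral that converges exactly for $s>\frac 12-\frac 1p$, and the $T^\alpha$ factor from $t^\alpha\leqslant T^\alpha$. The only (harmless) cosmetic differences are that you merge the two frequency regimes into a single weight integral, whereas the paper estimates $k\in[0,1]$ separately via a direct bound of the ratio by $1$, and that you spell out the final $t^\alpha$-weighting step which the paper compresses into ``since $\alpha>0$.''
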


\begin{proof}[\textnormal{\textbf{Proof of Theorem \ref{nh-fi-ibvp-rv-ca-t}.}}]
We  write $v=v_1+v_2$ as in the proof of Theorem \ref{nh-fi-ibvp-rv-t} and note that $v_1$ and $v_2$ can be estimated in the same way; hence, we only provide the proof for $v_1$.  By Minkowski's integral, we have
\sss{
\ddd{
\no{v_1(t)}_{L_x^{p}(0, \ell)}
&\lesssim
\int_{k=0}^1 
k \big|\widehat h(k^2)\big|
\left(\int_{x=0}^\ell  \left|
\frac{e^{ia^3kx}-e^{-ia^3k x}}{e^{ia^3k\ell}-e^{-ia^3k\ell}}
\right|^{p}  dx \right)^{\frac{1}{p}} 
dk
\label{nh-fi-ca-1}
\\
&\quad
+\int_{k=1}^\infty 
k \big|\widehat h(k^2)\big|
\left(\int_{x=0}^\ell  \left|
\frac{e^{ia^3kx}-e^{-ia^3k x}}{e^{ia^3k\ell}-e^{-ia^3k\ell}}
\right|^{p}  dx \right)^{\frac{1}{p}} 
dk.
\label{nh-fi-ca-2}
}
}
For \eqref{nh-fi-ca-1}, we bound the ratio of exponentials involved in the $x$-integral  by $1$ and then apply Cauchy-Schwarz in $k$ to obtain
\eee{\label{nh-fi-ca-1-est}
\eqref{nh-fi-ca-1}
\lesssim
\ell^{\frac{1}{p}}
\int_{k=0}^1 
k \big|\widehat h(k^2)\big| dk
\lesssim
\no{h}_{H_t^{\frac{2s+1}{4}}(\mathbb R)}^2,
\quad s\geqslant -\tfrac 12.
}
For \eqref{nh-fi-ca-2}, we employ the bound \eqref{nh-fi-exp-bound-2} and note that  $e^{-\sqrt 2 k x}\leqslant 1$ for $k, x\geqslant 0$ to infer
\eee{
\eqref{nh-fi-ca-2}
\lesssim
\int_{k=1}^\infty 
k \big|\widehat h(k^2)\big|
\left(\int_{x=0}^\ell  
e^{\frac{\sqrt 2}{2} k(x-\ell)p} dx \right)^{\frac{1}{p}} 
dk
\lesssim
\int_{k=1}^\infty 
k^{1-\frac{1}{p}} \big|\widehat h(k^2)\big|  dk.
\nn
}
Then, letting $k=\sqrt \tau$ and using Cauchy-Schwarz, we get
\ddd{
\eqref{nh-fi-ca-2}
&\lesssim
\left(
\int_{\tau=1}^\infty 
\tau^{-\frac{1}{p}} \left(1+\tau^2\right)^{-\frac{2s+1}{4}} d\tau\right)^{\frac 12} 
\no{h}_{H_t^{\frac{2s+1}{4}}(\mathbb R)}
\lesssim
\no{h}_{H_t^{\frac{2s+1}{4}}(\mathbb R)},\label{nh-fi-ca-2-est}
}
provided that $s>\tfrac 12-\tfrac{1}{p}$.
Combining estimates \eqref{nh-fi-ca-1-est} and \eqref{nh-fi-ca-2-est} for this range of $s$, we obtain
$
\no{v_1(t)}_{L_x^{p}(0, \ell)}
\lesssim
\no{h}_{H_t^{\frac{2s+1}{4}}(\mathbb R)}$,
which implies estimate \eqref{nh-fi-ibvp-rv-ca-est-0} for $v_1$ since $\alpha>0$.
Note that the restriction $s<\frac 12$ comes from the condition on $b$  in the definition \eqref{nh-cbeta-def} of $\alpha$ and not from the present proof.
\end{proof}

Overall,  in view of Theorems \ref{nh-fi-ibvp-rv-t} and \ref{nh-fi-ibvp-rv-ca-t},  inequalities \eqref{nh-fi-w0} and \eqref{nh-fi-bcdet}, and Theorem \ref{nh-flh-ibvp-wp-t} for the half-line IBVP \eqref{nh-flh-ibvp}, the superposition \eqref{nh-fi-flh-ibvp-sup}  yields the following result for the forced linear IBVP \eqref{nh-fi-flh-ibvp} on the finite interval.

\begin{theorem}[\b{Estimates for the forced linear heat on the finite interval}] 
\label{nh-fi-flh-ibvp-wp-t}
For $f=\prod_{j=1}^p  f_j$, the solution $u=S\big[u_0, g_0, h_0; f\big]$ of the forced linear heat IBVP  \eqref{nh-fi-flh-ibvp}  admits the space estimates
\sss{\label{nh-fi-flh-ibvp-se}
\ddd{
\no{S\big[u_0, g_0, h_0; f\big]}_{C([0, T]; H_x^s(0, \ell))}
&\leqslant
c_{s, p}
\Big(
\no{u_0}_{H_x^s(0, \ell)}
+
\no{g_0}_{H^{\frac{2s+1}{4}}_t(0, T)}
+
\no{h_0}_{H^{\frac{2s+1}{4}}_t(0, T)}
\label{nh-fi-flh-ibvp-se-h}
\\
&\hskip 1.3cm
+
\sqrt T \prod_{j=1}^p  \no{f_j}_{C([0, T]; H_x^s(0, \ell))}\Big),
\quad
\tfrac 12<s<\tfrac 32,
\nn\\
\no{S\big[u_0, g_0, h_0; f\big]}_{C([0, T]; H_x^s(0, \ell))}
&\leqslant
c_{s, p}
\Big(
\no{u_0}_{H_x^s(0, \ell)}
+
\no{g_0}_{H^{\frac{2s+1}{4}}_t(0, T)}
+
\no{h_0}_{H^{\frac{2s+1}{4}}_t(0, T)}
\label{nh-fi-flh-ibvp-se-l}
\\
&\hskip 1.3cm
+
\sqrt T \prod_{j=1}^p  \no{f_j}_{C^\alpha([0, T]; L_x^{p}(0, \ell))}\Big),
\quad
0\leqslant s<\tfrac 12, 
\nn
}
}
the time estimates
\sss{\label{nh-fi-flh-ibvp-te}
\ddd{
\no{S\big[u_0, g_0, h_0; f\big]}_{C([0, \ell]; H_t^{\frac{2s+1}{4}}(0, T))}
&\leqslant
c_{s, p}
\Big(
\no{u_0}_{H_x^s(0, \ell)}
+
\no{g_0}_{H^{\frac{2s+1}{4}}_t(0, T)}
+
\no{h_0}_{H^{\frac{2s+1}{4}}_t(0, T)}
\label{nh-fi-flh-ibvp-te-h}
\\
&\hskip 1.3cm
+
\sqrt T \prod_{j=1}^p  \no{f_j}_{C([0, T]; H_x^s(0, \ell))}\Big),
\quad
\tfrac 12<s<\tfrac 32, 
\nn\\
\no{S\big[u_0, g_0, h_0; f\big]}_{C([0, \ell]; H_t^{\frac{2s+1}{4}}(0, T))}
&\leqslant
c_{s, p}
\Big(\!
\no{u_0}_{H_x^s(0, \ell)}
+
\no{g_0}_{H^{\frac{2s+1}{4}}_t(0, T)}
\!\!+
\no{h_0}_{H^{\frac{2s+1}{4}}_t(0, T)}
\label{nh-fi-flh-ibvp-te-l}
\\
&\hskip 1.3cm
+
\sqrt T \prod_{j=1}^p  \no{f_j}_{C^\alpha([0, T]; L_x^{p}(0, \ell))}\Big),
\quad
0\leqslant s<\tfrac 12,
\nn
}
}
and the $L^{p}$-estimate
\ddd{
\no{S\big[u_0, g_0, h_0; f\big]}_{C^\alpha([0, T]; L_x^{p}(0, \ell))}
&\leqslant
c_{s, p}
\Big(
\no{u_0}_{H_x^s(0, \ell)}
+
\no{g_0}_{H^{\frac{2s+1}{4}}_t(0, T)}
+
\no{h_0}_{H^{\frac{2s+1}{4}}_t(0, T)}
\label{nh-fi-flh-ibvp-ca-est}
\\
&\hskip 1.3cm
+
T^{\alpha}
\prod_{j=1}^p \no{f_j}_{C^{\alpha}([0, T]; L_x^{p}(0, \ell))}\Big),
\quad
\tfrac 12-\tfrac{1}{p} < s<\tfrac 12,
\nn
}
where $c_{s, p}>0$ is a constant that depends only on $s$ and $p$.
\end{theorem}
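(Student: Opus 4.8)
The plan is to read off all three families of estimates directly from the superposition formula \eqref{nh-fi-flh-ibvp-sup}, which expresses the finite-interval solution as the sum of two pieces that have already been analyzed: (a) the restriction to $[0,\ell]$ of a half-line solution, and (b) the solution $S\big[0, 0, w_0; 0\big]$ of the reduced interval problem \eqref{nh-fi-ibvp-r}. Thus the argument reduces to combining the available bounds for these two summands and carefully tracking the powers of $T$; no new hard analysis is required.

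First I would dispose of the half-line summand $S\big[U_0|_{x\in[0,\ell]}, g_0; F|_{x\in[0,\ell]}\big]\big|_{x\in[0,\ell]}$. Since the relevant $C([0,T];H_x^s(0,\ell))$, $C([0,\ell];H_t^{\frac{2s+1}{4}}(0,T))$ and $C^\alpha([0,T];L_x^p(0,\ell))$ norms are all dominated by their counterparts on $(0,\infty)$ (restriction only decreases a norm), Theorem \ref{nh-flh-ibvp-wp-t} applies directly. The extension inequalities $\no{U_0}_{H_x^s(\mathbb R)}\leqslant c\no{u_0}_{H_x^s(0,\ell)}$ and $\no{F}_{C([0,T];H_x^s(\mathbb R))}\leqslant c\no{f}_{C([0,T];H_x^s(0,\ell))}$ for $s>\tfrac12$ (and the zero extension for $s\leqslant\tfrac12$) then convert the half-line data and forcing norms back into the interval norms appearing on the right-hand sides of \eqref{nh-fi-flh-ibvp-se}, \eqref{nh-fi-flh-ibvp-te} and \eqref{nh-fi-flh-ibvp-ca-est}, with the forcing automatically landing under the correct norm in each regularity regime.

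Next I would treat the reduced-interval summand by embedding \eqref{nh-fi-ibvp-r} into the pure IBVP \eqref{nh-fi-ibvp-rv} with the compactly supported boundary datum $h$. The Sobolev estimates of Theorem \ref{nh-fi-ibvp-rv-t} and the $C^\alpha$ estimate of Theorem \ref{nh-fi-ibvp-rv-ca-t} bound $S\big[0,0,w_0;0\big]$ by $\no{h}_{H_t^{\frac{2s+1}{4}}(\mathbb R)}$, which via \eqref{nh-fi-bcdet} is controlled by $\no{w_0}_{H_t^{\frac{2s+1}{4}}(0,T)}$, and the latter is in turn bounded through \eqref{nh-fi-w0} by the full data norm together with the forcing norm. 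Here I would note that the pure-IBVP estimates are stated on the time window $(0,2)$, but since $H_t^{\frac{2s+1}{4}}(0,T)$ is the restriction of $H_t^{\frac{2s+1}{4}}(0,2)$, passing to $(0,T)$ merely lowers the norm.

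Finally I would add the two contributions. The only bookkeeping subtlety is the power of $T$ in the $L^p$-estimate: the pure IBVP contributes a factor $T^\alpha$ while the forcing inside $\no{w_0}$ carries an extra $\sqrt T$; since $T<1$ and $\alpha<\tfrac12$ we have $\sqrt T\leqslant T^\alpha\leqslant 1$, so the composite factor $T^\alpha\sqrt T$ is harmless and the forcing ends up multiplied by the single $T^\alpha$ demanded in \eqref{nh-fi-flh-ibvp-ca-est}; the half-line $C^\alpha$ term already supplies a $T^\alpha$, so the total forcing factor is $\lesssim T^\alpha$. Continuity in $t$ and $x$ is inherited from the corresponding continuity statements in the source theorems. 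The genuine obstacle in this whole scheme has already been surmounted upstream, namely the delicate control in Theorem \ref{nh-fi-ibvp-rv-t} of the exponential denominators $e^{ik\ell}-e^{-ik\ell}$ appearing in the UTM formula \eqref{nh-fi-ibvp-rv-sol}; once those estimates are in hand, the proof of the present theorem is a purely assembly-level argument.
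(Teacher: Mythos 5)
Your proposal is correct and follows essentially the same route as the paper: the paper states this theorem precisely as the assembly of the superposition \eqref{nh-fi-flh-ibvp-sup}, the half-line estimates of Theorem \ref{nh-flh-ibvp-wp-t}, the pure-IBVP estimates of Theorems \ref{nh-fi-ibvp-rv-t} and \ref{nh-fi-ibvp-rv-ca-t}, and the inequalities \eqref{nh-fi-w0} and \eqref{nh-fi-bcdet}, exactly as you do. Your additional remarks on restriction lowering norms and on the bookkeeping $\sqrt T \leqslant T^\alpha \leqslant 1$ for $T<1$, $\alpha<\tfrac 12$ are accurate and merely make explicit what the paper leaves implicit.
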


\noindent
\textbf{Local well-posedness on the finite interval.}
The linear estimates of Theorem \ref{nh-fi-flh-ibvp-wp-t} allow us to carry out  \textit{exactly} the same contraction mapping argument with that of Section \ref{nh-lwp-s} for the half-line in order to establish Theorems \ref{nh-fi-ibvp-wp-h-t} and \ref{nh-fi-ibvp-wp-l-t} for the local well-posedness of the nonlinear IBVP \eqref{nh-fi-ibvp} on the finite interval. Indeed, the only modification required in the proofs of Section \ref{nh-lwp-s} is the replacement of the solution spaces $X$ and $Y$ and of the data norm $\no{\cdot}_D$ by their finite interval counterparts as stated in Theorems \ref{nh-fi-ibvp-wp-h-t} and \ref{nh-fi-ibvp-wp-l-t}.

\vskip 3mm
%
%
%
%
%
%
\noindent
\textbf{Acknowledgements.} The first author was partially supported by a grant from the Simons Foundation (\#524469 to Alex Himonas).

%
%
%
%
%
%
%

\vspace{8mm}
\noindent
A. Alexandrou Himonas \hspace{1.7cm} Dionyssios Mantzavinos  \hfill Fangchi Yan\\
Department of Mathematics \hspace{1.02cm} Department of Mathematics \hfill Department of Mathematics\\
University of Notre Dame \hspace{1.43cm} University of Kansas \hfill University of Notre Dame\\
Notre Dame, IN 46556 \hspace{1.99cm} Lawrence, KS 66045 \hfill Notre Dame, IN 46556 \\
E-mail: \textit{himonas.1$@$nd.edu} \hspace{1.28cm} E-mail: \textit{mantzavinos@ku.edu} \hfill E-mail: \textit{fyan1@nd.edu} 

\end{document}